\theoremstyle{plain}
\newtheorem{thm}{Theorem}
\newtheorem{lem}{Lemma}
\newtheorem{prop}{Proposition}
\newtheorem{cor}{Corollary}
\newtheorem{rem}{Remark}
\newcommand{\vecII}[2]{
\ensuremath{
\begin{pmatrix}  
#1 \\ #2 \\
\end{pmatrix}}}
\newcommand{\bra}[1]{\langle #1 \rangle}
\providecommand{\ind}{\mathds{1}} 
\providecommand{\les}{\lesssim}
\providecommand{\sm}{\setminus}
\providecommand{\N}{\mathbb{N}}
\providecommand{\R}{\mathbb{R}}
\providecommand{\Z}{\mathbb{Z}}
\providecommand{\C}{\mathbb{C}}
\providecommand{\eps}{\varepsilon}
\providecommand{\ov}{\overline}
\DeclareMathOperator{\supp}{supp}
\DeclareMathOperator{\Real}{Re}
\DeclareMathOperator{\sign}{sign}
\DeclareMathOperator{\dist}{dist}
\DeclareMathOperator{\BMO}{BMO}
\renewcommand{\qed}{\hfill $\Box$}
\begin{document}


\allowdisplaybreaks

\title{On Gagliardo-Nirenberg Inequalities with vanishing symbols}

\author{Rainer Mandel\textsuperscript{1}}
\address{\textsuperscript{1}Karlsruhe
Institute of Technology, Institute for Analysis, Englerstra{\ss}e 2, 76131 Karlsruhe, Germany}
\email{Rainer.Mandel@kit.edu}
  
\subjclass[2020]{35A23} 
\keywords{Gagliardo-Nirenberg Inequality, Interpolation} 

\begin{abstract}
  We prove interpolation inequalities of Gagliardo-Nirenberg type involving Fourier symbols that vanish on
  hypersurfaces in $\R^d$.
\end{abstract}

\maketitle
\allowdisplaybreaks

 \section{Introduction}

 In a recent paper by~Fern\'{a}ndez, Jeanjean, Mari\c{s} and the author
 the following inequality of Gagliardo-Nirenberg type was proved
 \begin{align}\label{eq:GN} 
   \|u\|_q \les \|(|D|^s-1)u\|_2^{1-\kappa}\|u\|_2^\kappa \qquad (u\in\mathcal S(\R^d)).
  \end{align}  
 Here, $(|D|^s-1)u= \mathcal F^{-1}((|\cdot|^s-1)\hat u)$, the symbol $\les$ stands for $\leq C$ for some
 positive number $C$ independent of $u$  and the parameters  are supposed to satisfy 
 \begin{equation}\label{eq:GNConditions}
   s>0,\,\kappa\geq \frac{1}{2},\,2\leq q<\infty,\, d\in\N,d\geq 2
   \quad\text{and}\quad   
   \frac{2(1-\kappa)}{d+1}\leq  \frac{1}{2}-\frac{1}{q} \leq \frac{(1-\kappa)s}{d}, 
 \end{equation} 
 see~\cite[Theorem~2.6]{FerJeaManMar}.  
 In this paper we investigate such inequalities in greater generality both by extending the analysis to a
 larger class  of exponents, but also by allowing for  more general Fourier symbols. We expect applications in
 the context of normalized solutions of elliptic PDEs and orbital
 stability~\cite{CazLio,BarJeaSoa,NorisTavaresVerzini} or long-time behaviour~\cite{Weinstein} of
 time-dependent PDEs just asin the case of the classical Gagliardo-Nirenberg
 Inequality~\cite{Nirenberg_OnElliptic}.
  In~\cite{FerJeaManMar} and~\cite{LenzmannWeth} applications of~\eqref{eq:GN} to variational existence
  results and symmetry breaking phenomena for biharmonic nonlinear Schr\"odinger equations are given.
  For the   existence and qualitative properties of maximizers in classical Gagliardo-Nirenberg inequalities
  we refer to~\cite{Weinstein,DelPinoDolbeault,BellFraVis,LenzmannSok,Zhang}. Interpolation inequalities in different
  spaces like Lorentz spaces, Besov spaces, BMO or weighted Lebesgue spaces can be found
  in~\cite{BrezisVanSch,HajMolOzaWa_Necessary,BreMiro_SobolevGN,DaoLam,CaffKohnNir,McCorRobRod_GN}.
 
 \medskip
 
 We shall be concerned with inequalities of the form
\begin{align}\label{eq:GNgeneral}
    \|u\|_q \les \|P_1(D)u\|_{r_1}^{1-\kappa}\|P_2(D)u\|_{r_2}^\kappa
\end{align}
where $q,r_1,r_2\in [1,\infty],\kappa\in [0,1]$ and $P_1,P_2:\R^d\to\R$ are Fourier symbols that
may vanish on a given smooth compact hypersurface $S\subset\R^d,d\geq 2$ with at least $k\in\{1,\ldots,d-1\}$
non-vanishing principal curvatures in each point. In the case $d=1$ the symbols are allowed to have a finite
set of zeros $S\subset \R$. 
We will assume that $P_i$ vanishes of order $\alpha_i$
on $S$ and behaves like $|\cdot|^{s_i}$ at infinity, see Assumption~(A1),(A2) below for a precise statement.
This covers~\eqref{eq:GN} as a special case where $d\geq 2$, 
$(\alpha_1,\alpha_2,s_1,s_2)=(1,0,s,0)$ and $S$ is the unit sphere in $\R^d$, so
$k=d-1$.  As an application of our results for~\eqref{eq:GNgeneral} we obtain the following generalization of
\cite[Theorem~2.6]{FerJeaManMar}.
 
\begin{thm}\label{thm:GNhigherDspecial} 
  Assume $d\in\N,d\geq 2,\kappa\in [0,1],s>0$. Then 
  $$
     \|u\|_q \les \|(|D|^s-1)u\|_r^{1-\kappa}\|u\|_r^\kappa \qquad (u\in\mathcal S(\R^d)) 
  $$
  holds provided that the exponents $r\in [1,2], q\in [2,\infty]$ satisfy
  $$
    \frac{2(1-\kappa)}{d+1} \leq \frac{1}{r}-\frac{1}{q} \leq \frac{(1-\kappa)s}{d}  
    \quad\text{and}\quad
     \min\left\{\frac{1}{r},\frac{1}{q'}\right\} 
     \begin{cases}
       \geq \frac{d+1-2\kappa}{2d} &\text{if }\kappa>0, \\
       > \frac{d+1}{2d} &\text{if }\kappa=0.
     \end{cases}
  $$ 
\end{thm}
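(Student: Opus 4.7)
My approach is to split $u$ in frequency space around the unit sphere $S=S^{d-1}$. Choose $\chi\in C_c^\infty(\R^d)$ equal to $1$ in a narrow annulus around $S$ and supported in a slightly larger one, and write $u = u_n + u_f$ with $u_n = \chi(D)u$, $u_f=(1-\chi(D))u$. I then estimate $\|u_n\|_q$ and $\|u_f\|_q$ separately. In the general language of \eqref{eq:GNgeneral}, this corresponds to the specialization $P_1(\xi)=|\xi|^s-1$, $P_2\equiv 1$, $r_1=r_2=r$, with $S$ carrying all $k=d-1$ non-vanishing principal curvatures, vanishing order $\alpha_1=1$ and decay $s_1=s$ at infinity; the two conditions on $(r,q,\kappa)$ should emerge as the concrete admissibility range.

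\textbf{Far piece.} On $\supp(1-\chi)$ one has $|P_1(\xi)| \gtrsim \langle\xi\rangle^s$, so $(1-\chi(\xi))\langle\xi\rangle^s/P_1(\xi)$ is Mihlin--H\"ormander and yields $\|\langle D\rangle^s u_f\|_r \les \|P_1(D)u\|_r$ for $1<r<\infty$. Coupling with $\|u_f\|_r\les\|u\|_r$ and the classical inhomogeneous Gagliardo--Nirenberg inequality
$$
\|u_f\|_q \les \|u_f\|_r^{\kappa}\|\langle D\rangle^s u_f\|_r^{1-\kappa}\qquad\bigl(\tfrac1r-\tfrac1q\leq\tfrac{(1-\kappa)s}{d}\bigr)
$$
gives $\|u_f\|_q \les \|u\|_r^\kappa \|P_1(D)u\|_r^{1-\kappa}$, which already uses the upper admissibility condition.

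\textbf{Near piece.} This is the technical heart. I would dyadically decompose $\chi=\sum_{j\geq j_0}\phi_j$ with $\phi_j$ supported in $\{\bigl||\xi|-1\bigr|\sim 2^{-j}\}$; on $\supp\phi_j$ the symbol $P_1$ has size $\sim 2^{-j}$, so $u_{n,j}:=\mathcal F^{-1}(\phi_j\hat u)$ is Fourier-localized in a slab of thickness $2^{-j}$ about $S$. Applying the Stein--Tomas restriction theorem on the parallel surfaces $\{|\xi|=1\pm 2^{-j}\}$ (which inherit the $d-1$ non-vanishing principal curvatures of $S$), together with a $TT^*$/Bochner--Riesz argument, should give per-shell $L^r\to L^q$ bounds that decay like $2^{-j\beta(r,q,\kappa)}$. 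Positivity of $\beta$ should match $\tfrac1r-\tfrac1q\geq\tfrac{2(1-\kappa)}{d+1}$, which is precisely what makes the dyadic sum converge; the condition $\min\{\tfrac1r,\tfrac1{q'}\}\geq\tfrac{d+1-2\kappa}{2d}$ is the admissibility region of the restriction/Bochner--Riesz multiplier at complex-interpolation parameter $1-\kappa$, and strict inequality at $\kappa=0$ reflects the endpoint Stein--Tomas case. Complex interpolation between the $\kappa=0$ and $\kappa=1$ endpoint per-shell bounds produces the mixed power $\|P_1(D)u\|_r^{1-\kappa}\|u\|_r^\kappa$, and summing in $j$ closes the estimate.

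\textbf{Main obstacle.} The bottleneck is the near piece: restriction/$TT^*$ bookkeeping in $L^r$-based spaces with $r\neq 2$, and sharp tracking of the decay exponent across dyadic shells. The threshold $(d+1-2\kappa)/(2d)$ and the strict-vs-non-strict distinction at $\kappa=0$ both stem from the sharp endpoint behaviour of Stein--Tomas restriction on a surface with non-vanishing Gaussian curvature, and matching the complex-interpolation bookkeeping to the exact numerology in the statement is where the real care is required.
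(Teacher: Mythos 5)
Your decomposition and overall architecture coincide with the paper's: the paper writes $u=u_1+u_2$ with a cutoff near $S=S^{d-1}$, treats the far piece by a multiplier theorem reducing to $\bra{D}^s$ (Propositions~\ref{prop:multiplier}, \ref{prop:dyadicEstimatesI}, \ref{prop:LargeFreqInterpolated}), and treats the near piece by dyadic shells $\bigl||\xi|-1\bigr|\sim 2^{-j}$ with per-shell $L^r\to L^q$ bounds built from Stein--Tomas and restriction--extension estimates (Lemma~\ref{lem:Tdelta2}, Propositions~\ref{prop:DyadicEstimatesII}, \ref{prop:CritFreqInterpolated}); the theorem itself is then the specialization $(\alpha_1,\alpha_2,s_1,s_2,k)=(1,0,s,0,d-1)$ of Theorem~\ref{thm:GNhigherD}. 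So the plan is the right one. But there is a genuine gap at the heart of your near-piece argument: you assert that positivity of the per-shell decay exponent $\beta$ ``matches'' the condition $\frac1r-\frac1q\geq\frac{2(1-\kappa)}{d+1}$. It does not. The interpolated per-shell bound is $\|u_{n,j}\|_q\les 2^{-j(A(r,q)-(1-\kappa))}\|P_1(D)u\|_r^{1-\kappa}\|u\|_r^\kappa$ with $A_1(r,q)=\frac{d+1}{2}(\frac1r-\frac1q)$, so the geometric series converges only under the \emph{strict} inequality $\frac1r-\frac1q>\frac{2(1-\kappa)}{d+1}$. At equality --- which the theorem explicitly allows, and which is exactly the case needed to recover \cite[Theorem~2.6]{FerJeaManMar} --- the naive dyadic sum diverges. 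The missing idea is Bourgain's summation argument (Lemma~\ref{lem:SummationLemma}, estimate~\eqref{eq:SummationI}): one keeps the two per-shell bounds against $\|P_1(D)u\|_r$ and $\|u\|_r$ separate, with individual exponents $\beta_1=1-A$, $\beta_2=-A$ both nonzero, and sums in the real interpolation scale $(X_1,X_2)_{\kappa,1}\to(Y_1,Y_2)_{\kappa,\infty}$; further real-interpolation of the resulting weak/endpoint bounds with each other is then needed to upgrade to strong $L^q$ bounds. Without this your proof only yields the open range.

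Two smaller points. First, your far-piece multiplier step invokes Mikhlin--H\"ormander ``for $1<r<\infty$'', but the theorem permits $r=1$ (and $q=\infty$); the paper circumvents this with Proposition~\ref{prop:multiplier}, which trades slightly stronger derivative decay for an integrable kernel and hence boundedness on all of $L^\mu$, $\mu\in[1,\infty]$. Second, your attribution of the threshold $\frac{d+1-2\kappa}{2d}$ to the Stein--Tomas endpoint is heuristically right but incomplete: in the paper it arises from requiring $A(r,q)\geq 1-\kappa$ where $A$ is a minimum over several exponents ($A_2$, $A_2'$ coming from the restriction--extension pentagon of \cite[Corollary~5.1]{ManSch}, not only the Stein--Tomas line), and the strict inequality at $\kappa=0$ reflects that on the boundary of that pentagon only restricted weak-type bounds are available (the exceptional set $\mathcal E$), which cannot be summed to a strong Sobolev-type bound when $\kappa=0$.
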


 So our result from~\cite{FerJeaManMar} is recovered as~\eqref{eq:GNConditions} is nothing
 but the special case $r=2$ in the above theorem. We  even obtain  
 sufficient conditions for general $q,r_1,r_2\in [1,\infty]$.
 In the one-dimensional case we obtain the following generalization of \cite[Theorem~2.3]{FerJeaManMar}.

\begin{thm}\label{thm:GN1Dspecial}
   Assume $\kappa\in [0,1],s>0$. Then  
  $$
     \|u\|_q \les \|(|D|^s-1)u\|_{r_1}^{1-\kappa} \|u\|_{r_2}^\kappa \qquad (u\in\mathcal S(\R)) 
  $$
  holds provided that $q,r_1,r_2\in [1,\infty]$ satisfy $1-\kappa \leq \frac{ 
  1-\kappa}{r_1}+\frac{\kappa}{r_2} -  \frac{1}{q} \leq (1-\kappa)s$.
\end{thm}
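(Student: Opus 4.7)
I would deduce Theorem~\ref{thm:GN1Dspecial} as the one-dimensional specialization of the general multiplier inequality \eqref{eq:GNgeneral} established in the body of the paper under Assumptions~(A1),~(A2). Taking $P_1(\xi) = |\xi|^s - 1$ and $P_2(\xi) \equiv 1$ in dimension $d = 1$, the symbol $P_1$ vanishes to order $\alpha_1 = 1$ on $S = \{-1, 1\}$ and satisfies $|P_1(\xi)| \sim |\xi|^s$ at infinity, while $P_2$ has no zeros and growth order $s_2 = 0$. The condition stated in the theorem, $1-\kappa \le \tfrac{1-\kappa}{r_1} + \tfrac{\kappa}{r_2} - \tfrac{1}{q} \le (1-\kappa)s$, matches precisely the admissibility range of the general result: the lower bound $(1-\kappa)\alpha_1 = 1-\kappa$ reflects the vanishing order on $S$ and the upper bound $(1-\kappa)s_1 = (1-\kappa)s$ reflects the growth at infinity.

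Sketching the underlying mechanism, the key endpoint is $\kappa = 0$, $r_1 = 1$, $q = \infty$, which asserts $\|u\|_\infty \les \|(|D|^s-1)u\|_1$. One writes $u = K * (|D|^s-1)u$, where $K$ is the principal-value Fourier inverse of $(|\xi|^s-1)^{-1}$, and shows $K \in L^\infty(\R)$: the two simple poles at $\pm 1$ give, after modulation by $e^{\pm ix}$, Hilbert-transform-type kernels (hence bounded), while the smooth tail is handled by the $|\xi|^{-s}$ decay at infinity. For instance at $s = 2$, partial fractions give the closed-form expression $K(x) = -\tfrac{1}{2}\sin(x)\,\sign(x)$, so $\|K\|_\infty = 1/2$. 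Hölder interpolation with the trivial bound $\|u\|_{r_2} \le \|u\|_{r_2}$ then yields the inequality along the lower-edge curve $\tfrac{1-\kappa}{r_1} + \tfrac{\kappa}{r_2} - \tfrac{1}{q} = 1-\kappa$ of the admissible region.

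The interior of the admissibility region is reached by combining this endpoint with auxiliary estimates coming from a frequency decomposition $u = \chi(D)u + m(D)(|D|^s-1)u$, where $\chi \in C_c^\infty(\R)$ is supported near $\{\pm 1\}$ and $m := (1-\chi)/(|\xi|^s-1)$ is a smooth Bessel-type multiplier with $|\xi|^{-s}$ decay. Young's inequality controls $\|\chi(D)u\|_q$ by $\|u\|_{r_2}$ for $q \ge r_2$ (the kernel of $\chi(D)$ is Schwartz), and Young / Hardy--Littlewood--Sobolev controls $\|m(D)(|D|^s-1)u\|_q$ by $\|(|D|^s-1)u\|_{r_1}$ for $\tfrac{1}{r_1} - \tfrac{1}{q} \le s$. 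The main obstacle is lifting the resulting additive bound to the multiplicative Gagliardo--Nirenberg form: because $|D|^s-1$ is not scale invariant (due to the $-1$ in the symbol), the usual scaling optimization is unavailable, and one must rely either on a real/complex interpolation between the previously established endpoint inequalities or on a delicate weighted cutoff argument built into the proof of the general theorem.
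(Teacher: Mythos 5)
Your main route --- specializing the paper's general one-dimensional result (Theorem~\ref{thm:GN1D}) to $P_1(D)=|D|^s-1$, $P_2(D)=I$ with $(\alpha_1,\alpha_2,s_1,s_2)=(1,0,s,0)$, so that $\ov\alpha=1-\kappa$ and $\ov s=(1-\kappa)s$ give exactly the stated range --- is precisely the paper's proof; the only bookkeeping you omit is the (one-line) verification that the endpoint conditions (i)--(vi) of that theorem impose no further restriction for these parameters. One caveat on your ``underlying mechanism'' sketch: at the endpoint $s=1$, $r_1=1$, $q=\infty$ (which is admissible here) the kernel $\mathcal F^{-1}\big((|\xi|^s-1)^{-1}\big)$ is \emph{not} in $L^\infty(\R)$, since the even tail $\sim|\xi|^{-1}$ produces a logarithmic singularity; the paper instead reaches this case via $\|u\|_\infty\les\|u'\|_1$ together with the smoothness of the symbol at its simple zeros (Proposition~\ref{prop:BesselPotentials}~(iii) and Lemma~\ref{lem:SobolevIneq}~(i)), not by a bounded-kernel convolution bound.
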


 Both our main results arise as special cases of Theorem~\ref{thm:GN1D} and Theorem~\ref{thm:GNhigherD} where
 interpolation inequalities of the form~\eqref{eq:GNgeneral} are proved for symbols $P_1,P_2:\R^d\to\R$ that
 satisfy the following abstract conditions:
  \begin{itemize}
  \item[(A1)] There is a compact hypersurface
  $S=\{\xi\in\R^d:F(\xi)=0\}$ with $F\in C^\infty(\R^d)$, $|\nabla F|\neq 0$ on $S$ and at least
  $k\in\{1,\ldots,d-1\}$ nonvanishing principal curvatures at  each point such that
  $\{\xi\in\R^d: P_i(\xi)=0\}\subset S$. For $\xi$ near $S$ we have
  $P_i(\xi)= a_{i+}(\xi)F(\xi)_+^{\alpha_i} + a_{i-}(\xi)F(\xi)_-^{\alpha_i}$ for smooth non-vanishing
  functions $a_{i+},a_{i-}$ and  $\alpha_i>-1$. In the case $\alpha_i=1$ additionally assume
  $a_{i-}=-a_{i-}$ and in the case $\alpha_i=0$ additionally assume $a_{i-}=a_{i+}$.
  \item[(A2)] 
    There are $s_1,s_2\in\R,\delta>0$ such that for $\dist(\xi,S)\geq \delta>0$ the functions
   $Q_i(\xi):= \langle\xi\rangle^{s_i}/ P_i(\xi)$ satisfy for some $\eps>0$
   \begin{align*}
    &\left| \partial^\gamma Q_i(\xi) \right| 
      \les \langle\xi\rangle^{-|\gamma|} 
      &&\hspace{-3cm}\text{ if }\gamma\in\N_0^d,\;\,  0\leq
      |\gamma|\leq \left\lfloor d/2\right\rfloor,\\    
    &\left| \partial^\gamma Q_i(\xi) \right| 
      \les \langle\xi\rangle^{-\eps-|\gamma|}
    &&\hspace{-3cm}\text{ if }\gamma\in\N_0^d,\;\,  
      |\gamma| = \left\lfloor d/2\right\rfloor + 1.
  \end{align*}
  \end{itemize}
  Here and in the following we set  $\langle\xi\rangle:= (1+|\xi|^2)^{1/2}$ and  
  $|\gamma|:=|(\gamma_1,\ldots,\gamma_d)|:=\gamma_1+\ldots+\gamma_d$  for multi-indices $\gamma\in\N_0^d$,
  $F(\xi)_+:=\max\{F(\xi),0\}$ and $F(\xi)_-=:-\min\{F(\xi),0\}$. In the case $d=1$ assumption (A1) is
  supposed to mean   $S=\{\xi\in\R: F(\xi)=0\}=\{\xi_1^*,\ldots,\xi_L^*\}$ with $F,P_i,a_{i+},a_{i-}$
  as above. Given the importance of the fractional Laplacian $(-\Delta)^{s/2}  = |D|^s$ we mention that one
  may generalize this further by   allowing the symbols $P_1,P_2$ to vanish at some finite set of points in
  $\R^d\sm S$, see Remark~\ref{rem:GNandFracLap}. The choice $P_1=P_2$ or $\kappa\in\{0,1\}$ leads to
  Sobolev inequalities. In the elliptic case  $-\Delta-1=|D|^2-1$ such results are due to  Kenig, Ruiz,
  Sogge~\cite[Theorem~2.3]{KeRuSo}, Guti\'{e}rrez~\cite[Theorem~6]{Gut} and Evequoz
  \cite{EveqPlane}. Our  most general result from  Theorem~\ref{thm:GNhigherD} contains these results as a
  special case $(k,s_1,\alpha_1,\kappa)=(d-1,2,1,0)$.
  Sharp results for special non-elliptic symbols with unbounded characteristic set $S$ are due to 
  Kenig, Ruiz, Sogge~\cite[Theorem~2.1]{KeRuSo}, Koch, Tataru \cite{KochTat_Principally} and
  Jeong, Kwon, Lee~\cite[Theorem~1.1]{JeongKwonLee_Uniform}. 
    

   
  \begin{rem} ~
  \begin{itemize}
    \item[(a)] In the case $S=\emptyset$ the main results of this paper hold without any assumption on
    $\alpha_1,\alpha_2$. 
    Similarly, if the Fourier support of the given functions is contained in a fixed compact subset of $\R^d$,
    then all conditions involving $s_1,s_2$ can be ignored.
    \item[(b)] Theorem~\ref{thm:GNhigherDspecial} and \ref{thm:GN1Dspecial} equally hold for
    symbols $P_i(|D|)$ where $P_i$ are polynomials of degree $s$ with simple zeros only or no zeros at all.
    \item[(c)] Our analysis may be extended to  vectorial differential operators with constant
    coefficients $P_1(D),P_2(D)$ where, according to Cramer's rule, the characteristic set $S$ is then
    supposed to satisfy $\{\det(P_i(\xi))=0\}\subset S$ for $i=1,2$.
    Such a situation  occurs in the context of Maxwell's equations, Dirac equations or Lam\'{e}
    equations with constant coefficients.  
    \item[(d)] The Gagliardo-Nirenberg inequalities from this paper hold for functions
    with Fourier support in bounded smooth pieces of more general sets $S\subset\R^d$. In this way,  
    unbounded characteristic sets $S$ or characteristic sets with singularities as in
    \cite[Section~3]{ManSch} may be partially analyzed, but a full analysis remains to be done.  In the
    special case of the wave and Schr\"odinger operator one may nevertheless implement the strategy
    from~\cite{FerJeaManMar} to get such inequalities at least for $r=2$, see
    Section~\ref{sec:GN_wave}.
    \item[(e)] The admissible set of exponents for Gagliardo-Nirenberg inequalities may become larger
    by imposing   symmetries. For instance, the Stein-Tomas Theorem for
    $O(d-k)\times O(k)$-symmetric functions from \cite{ManDOS} may substitute the
    classical Stein-Tomas Theorem in Lemma~\ref{lem:Tdelta2} to prove better dyadic estimates. The latter
    yield larger values for $A_\eps(p,q)$ in~\eqref{eq:def_Aeps}, which allows to deduce  Gagliardo-Nirenberg
    inequalities for a wider range of exponents.
  \end{itemize}
  \end{rem}

  \medskip

  Our strategy is as follows. We decompose the pseudo-differential operators
  $P_1(D),P_2(D)$ dyadically, both for frequencies close to the critical surface $S$ and at infinity.
  Assumption (A1) allows to analyze the first-mentioned part with the aid of Bochner-Riesz estimates
  from~\cite{ManSch,ChoKimLeeShim2005}. Here, only the parameters $\alpha_1,\alpha_2$ will play a
  role. Assumption (A2) will be used to estimate the second-mentioned part that only involves $s_1,s_2$. 
  Interpolating the bounds for the dyadic operators in both frequency regimes then allows to conclude. We
  stress that the proof from~\cite{FerJeaManMar} does not carry over from the $L^2(\R^d)$-setting since Plancherel's Theorem does not have a counterpart in $L^r(\R^d)$
  with $r\neq 2$.


\section{Preliminaries}

In the following we  decompose a given Schwartz function $u\in\mathcal S(\R^d)$ in frequency space. We  start
by separating the frequencies close to the critical surface from the others by defining
\begin{equation}\label{eq:u1u2}
  u_1:= \mathcal F^{-1}(\tau \hat u),\;
  u_2:= \mathcal F^{-1}( (1-\tau) \hat u)
  \quad\text{where } \tau\in C_0^\infty(\R^d),\;
   \tau =1\text{ near }S.
\end{equation}
More precisely, $\tau$ is chosen in such a way that $S$ admits local parametrizations in Euclidean
coordinates within $\supp(\tau)$, that $a_{i+},a_{i-}$ from (A1) are uniformly positive near $S$ and that
the functions $Q_i$ from (A2) behave as required for $\xi\in\R^d\sm\supp(\tau)$. The function $\tau$ is
considered as fixed from now on.
For both $u_1$ and $u_2$ we will introduce a dyadic decomposition into infinitely many annular regions in
order to prove our estimates mostly via Bourgain's summation argument~\cite{Bourgain1985}. We will need the
following abstract version of this result from~\cite[p.604]{CarberySeegerWaingerWright1999}.

\begin{lem}\label{lem:SummationLemma} 
Let $\beta_1,\beta_2\in\R,\theta\in (0,1)$, let $(X_1,X_2)$ and $(Y_1,Y_2)$ be real interpolation pairs of
Banach spaces. For $j \in \mathbb{N}$ let $\mathcal T_j$ be linear operators satisfying
\begin{align*}
\| \mathcal T_jf \|_{Y_1} \leq M_1\, 2^{\beta_1 j}  \| f \|_{X_1}, \qquad
\| \mathcal T_jf \|_{Y_2} \leq M_2\, 2^{\beta_2 j} \| f \|_{X_2}.
\end{align*}
Then we have
\begin{align}
\label{eq:SummationI}
  \| \sum_{j\in\N} \mathcal T_jf \|_{(Y_1,Y_2)_{\theta,\infty}}  
  \leq C(\beta_1,\beta_2) M_1^{1-\theta} M_2^{\theta} \|f\|_{(X_1,X_2)_{\theta,1}} 
\end{align}
 provided that $(1-\theta)\beta_1+\theta\beta_2=0$ with $\beta_1,\beta_2\neq 0$.  
 In the case $(1-\theta)\beta_1+\theta\beta_2<0$ we have for all $r\in [1,\infty]$
\begin{align}
\label{eq:SummationII}
  \| \sum_{j\in\N} \mathcal T_jf \|_{(Y_1,Y_2)_{\theta,r}}  
  \leq C M_1^{1-\theta} M_2^{\theta} \|f\|_{(X_1,X_2)_{\theta,r}}. 
\end{align}
\end{lem}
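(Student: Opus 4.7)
My plan is to combine the real interpolation method (via the $K$-functional) with Bourgain's dyadic summation trick from \cite{Bourgain1985}.

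As a first step, interpolating the two given inequalities for each fixed $j$ yields, for every $r \in [1,\infty]$,
$$\|\mathcal T_j f\|_{(Y_1,Y_2)_{\theta,r}} \lesssim M_1^{1-\theta}M_2^\theta\, 2^{\beta j}\|f\|_{(X_1,X_2)_{\theta,r}}, \qquad \beta := (1-\theta)\beta_1 + \theta\beta_2.$$
If $\beta < 0$ the geometric sum over $j \in \mathbb N$ converges and \eqref{eq:SummationII} follows immediately. The nontrivial case is therefore the balanced one $\beta = 0$, for which \eqref{eq:SummationI} is required. Since the hypothesis $\beta_1,\beta_2 \neq 0$ together with $\beta = 0$ forces $\beta_1,\beta_2$ to have opposite signs, one may assume $\beta_1 < 0 < \beta_2$ after relabelling; then $\theta = -\beta_1/(\beta_2-\beta_1) \in (0,1)$.

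Write $\mathcal T := \sum_{j \in \mathbb N}\mathcal T_j$ for the full operator. The key observation is that for each cutoff $N \in \mathbb N$ the head $S^N := \sum_{j\le N}\mathcal T_j$ and the tail $R^N := \sum_{j>N}\mathcal T_j$ inherit the bounds
$$\|S^N f\|_{Y_1} \lesssim M_1\|f\|_{X_1}, \quad \|S^N f\|_{Y_2} \lesssim M_2\, 2^{\beta_2 N}\|f\|_{X_2},$$
$$\|R^N f\|_{Y_1} \lesssim M_1\, 2^{\beta_1 N}\|f\|_{X_1}, \quad \|R^N f\|_{Y_2} \lesssim M_2\|f\|_{X_2},$$
because in each case one of the two partial geometric series converges uniformly while the other is comparable to its extreme term. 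Using the characterization $\|\mathcal Tf\|_{(Y_1,Y_2)_{\theta,\infty}} = \sup_{t>0}t^{-\theta}K(t,\mathcal Tf;Y_1,Y_2)$, the strategy is, for each $t > 0$, to split $\mathcal Tf = R^N f + S^N f$ with $N = N(t)$ and simultaneously to decompose $f = a + b$ near-optimally for the $K$-functional of $f$ at a scale $\sigma$ to be chosen. Regrouping so that only $R^N a$ (estimated in $Y_1$) and $S^N b$ (estimated in $Y_2$) appear --- i.e.\ so that only the available bounds $X_i \to Y_i$ are actually invoked --- yields
$$K(t, \mathcal Tf; Y_1, Y_2) \lesssim M_1\, 2^{\beta_1 N}\|a\|_{X_1} + t M_2\, 2^{\beta_2 N}\|b\|_{X_2}.$$
Choosing $N$ so that $M_1\, 2^{\beta_1 N} \sim tM_2\, 2^{\beta_2 N}$ exploits the identity $\beta = 0$ to produce the correct homogeneity $t^\theta M_1^{1-\theta}M_2^\theta$; summing over dyadic scales $\sigma = 2^n$ via the equivalent discrete expression $\|f\|_{(X_1,X_2)_{\theta,1}} \sim \sum_{n} 2^{-n\theta}K(2^n,f;X_1,X_2)$ then reconstructs the right-hand side of \eqref{eq:SummationI}.

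The main obstacle is the bookkeeping in this last step: coupling the choice of the threshold $N$ with the $K$-functional decomposition of $f$ so that only the two available bounds get invoked, and tracking how the balance $\beta=0$ converts the $t$-dependence into the precise power $t^\theta$. The loss of a Lorentz index --- output $r=\infty$ paired with input $r=1$ --- is intrinsic to the argument and explains the asymmetry between \eqref{eq:SummationI} and \eqref{eq:SummationII}, and the constant $C(\beta_1,\beta_2)$ arises from the dependence of the partial geometric sums on the exponents.
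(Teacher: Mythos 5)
Your treatment of \eqref{eq:SummationII} is correct and coincides with the paper's one-line justification: interpolate each dyadic bound to get $\|\mathcal T_jf\|_{(Y_1,Y_2)_{\theta,r}}\lesssim M_1^{1-\theta}M_2^{\theta}2^{\beta j}\|f\|_{(X_1,X_2)_{\theta,r}}$ and sum the geometric series when $\beta<0$. (The paper does not prove \eqref{eq:SummationI}; it cites Carbery--Seeger--Wainger--Wright, so the comparison below is with the standard Bourgain argument.)

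For \eqref{eq:SummationI} your overall strategy is the right one, but the mechanism you set up has a genuine gap. First, the claimed tail bound $\|R^Nf\|_{Y_2}\lesssim M_2\|f\|_{X_2}$ is false: with $\beta_2>0$ the series $\sum_{j>N}2^{\beta_2 j}$ diverges, so $R^N$ admits no $X_2\to Y_2$ bound whatsoever. Second, the ``regrouping'' is not an identity: $\mathcal Tf=R^Na+R^Nb+S^Na+S^Nb$, and with the available hypotheses the piece $S^Na$ can only be placed in the $Y_1$ component (where it contributes $M_1\|a\|_{X_1}$ \emph{without} the gain $2^{\beta_1N}$, already ruining the balancing), while $R^Nb$ can be estimated in neither $Y_1$ (no control of $\|b\|_{X_1}$) nor $Y_2$ (divergent series). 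Hence no grouping of the four pieces yields your displayed bound on $K(t,\mathcal Tf;Y_1,Y_2)$; a single threshold $N$ together with a single decomposition $f=a+b$ cannot work. The correct device is to let the decomposition of $f$ vary with the dyadic index $j$: choose near-optimal decompositions $f=a_n+b_n$ with $\|a_n\|_{X_1}+2^n\|b_n\|_{X_2}\leq 2K(2^n,f;X_1,X_2)$, assign to each $j$ the scale $n(j)$ determined by $2^{n(j)}\sim (tM_2/M_1)\,2^{(\beta_2-\beta_1)j}$, and set $g_1=\sum_j\mathcal T_ja_{n(j)}$, $g_2=\sum_j\mathcal T_jb_{n(j)}$, so that $\mathcal Tf=g_1+g_2$. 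Each summand of $\|g_1\|_{Y_1}+t\|g_2\|_{Y_2}$ is then at most $M_12^{\beta_1 j}\bigl(\|a_{n(j)}\|_{X_1}+2^{n(j)}\|b_{n(j)}\|_{X_2}\bigr)\lesssim M_12^{\beta_1 j}K(2^{n(j)},f;X_1,X_2)$, and since $2^{\beta_1 j}=\bigl(2^{n(j)}M_1/(tM_2)\bigr)^{-\theta}$ and $j\mapsto n(j)$ is essentially injective, the sum over $j$ is $\lesssim t^{\theta}M_1^{1-\theta}M_2^{\theta}\sum_n2^{-n\theta}K(2^n,f;X_1,X_2)\sim t^{\theta}M_1^{1-\theta}M_2^{\theta}\|f\|_{(X_1,X_2)_{\theta,1}}$, which gives \eqref{eq:SummationI}. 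This $j$-dependent splitting is exactly the coupling you flagged as ``bookkeeping''; it is not bookkeeping but the essential idea, and without it the term $R^Nb$ blocks the proof.
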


The whole point of this result is \eqref{eq:SummationI}; the estimate~\eqref{eq:SummationII} is a
rather trivial consequence of the summability  of the interpolated bounds 
$$
  \|\mathcal T_jf \|_{(Y_1,Y_2)_{\theta,r}} 
  \les 2^{j((1-\theta)\beta_1+\theta\beta_2)} \|f\|_{(X_1,X_2)_{\theta,r}}
  \quad\text{for all }r\in [1,\infty].
$$
Here, $(Y_1,Y_2)_{\theta,r},(X_1,X_2)_{\theta,r}$ denote real interpolation spaces~\cite{BerghLoefstrom1976}. 
The choice $Y_1=L^{q_1}(\R^d),Y_2=L^{q_2}(\R^d)$  with 
$\frac{1}{q}=\frac{1-\theta}{q_1}+\frac{\theta}{q_2},q_1\neq q_2$ yields  the Lorentz space 
$(Y_1,Y_2)_{\theta,r}=L^{q,r}(\R^d)$ whereas   $q_1=q_2=q$  leads to
$(Y_1,Y_2)_{\theta,r}=L^q(\R^d)$.
In our context, the spaces $X_i$ are defined as the completion of $\{u\in \mathcal S(\R^d) : P_i(D)u\in
L^r(\R^d)\}$ with respect to the norm $\|u\|_{X_i}:=\|P_i(D)u\|_r$. Exploiting assumption (A1),(A2) we
find that for any given $u\in\mathcal S(\R^d)$ the function $P_i(D)u$ is a priori well-defined as a function
in $L^\infty(\R^d)$ because $\xi\mapsto P_i(\xi)\hat u(\xi)$ is integrable due to $\alpha_i>-1$.
(Choosing the completion of a smaller set one may extend the analysis to $\alpha_i\leq -1$.) 
The link to Gagliardo-Nirenberg-type inequalities is provided by the general
interpolation property \cite[Theorem~3.1.2]{BerghLoefstrom1976}, namely
\begin{equation*}
   \|f\|_{(X_1,X_2)_{\kappa,r}}
   \leq \|f\|_{X_1}^{1-\kappa} \|f\|_{X_2}^{\kappa}
   \qquad (0<\kappa<1,1\leq r\leq \infty).
\end{equation*}
In fact, choosing $X_1,X_2$ as above we obtain for $u\in\mathcal S(\R^d)$
\begin{equation}\label{eq:interpolationfunctor}
  \|u\|_{(X_1,X_2)_{\kappa,r}}
   \leq \|P_1(D)u\|_{r_1}^{1-\kappa} \|P_2(D)u\|_{r_2}^{\kappa}
   \qquad (0<\kappa<1,1\leq r\leq \infty).
\end{equation}
The same estimate holds for $(X_1,X_2)_{\kappa,r}$ replaced by the complex interpolation space
$[X_1,X_2]_{\kappa}$. This can be deduced from  \eqref{eq:interpolationfunctor} and~$[X_1,X_2]_{\kappa}\subset
(X_1,X_2)_{\kappa,\infty}$, see \cite[Theorem~4.7.1]{BerghLoefstrom1976}.

\section{Large frequency analysis}

We start with our analysis for large frequencies or, more precisely, for those frequencies with
uniformly positive distance to the critical surface $S$ given by our assumption (A1). To this end we first
choose a function $\eta$ such that
  \begin{equation*}
    \eta\in C_0^\infty(\R),\quad  \supp(\eta)\subset [-2,-\frac{1}{2}]\cup [\frac{1}{2},2],\quad
    \sum_{j\in\Z} \eta(2^j\cdot)=1 \text{ almost everywhere on } \R,
  \end{equation*}
  see~\cite[Lemma~6.1.7]{BerghLoefstrom1976}. For $\xi_0\in\R^d$  define  
  \begin{align} \label{eq:def_Tj}
    \begin{aligned}
    T_j f
    &:= \mathcal F^{-1}\left( \eta(2^j|\xi-\xi_0|) \hat f \right)
    = K_j\ast f \qquad \text{where}\\
    K_j(x)
    &:= \mathcal F^{-1}\left( \eta(2^j|\xi-\xi_0|)\right)(x)
     = 2^{-jd}  \mathcal F^{-1}\left( \eta(|\cdot|)\right)(2^{-j}x)e^{ix\cdot\xi_0}.
    \end{aligned}
  \end{align}
  Later on, we will choose $\xi_0\in S$ in order to have 
  $T_ju_2=0$ for $j\geq j_0$ where $j_0\in\Z$ only depends on $\xi_0$ and $\tau$.
  Indeed, \eqref{eq:u1u2} implies that $\hat u_2(\xi) = (1-\tau(\xi))\hat u(\xi)$ vanishes for
  frequencies $\xi$ close to $S$. As a consequence, only the bounds for $j\searrow -\infty$ will be of
  importance.

\begin{lem}\label{lem:Tdelta}
  Assume $d\in \N$ and let $\eta\in C_0^\infty(\R)$, $\xi_0\in \R^d$. Then we have for $j\in\Z$
  $$
    \|T_j\|_{p\to q} \les 2^{-jd(\frac{1}{p}-\frac{1}{q})}  
     \qquad \text{for }1\leq p\leq q\leq \infty.
  $$
\end{lem}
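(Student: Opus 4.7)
The strategy is a straightforward application of Young's convolution inequality combined with a scaling computation on the kernel $K_j$.

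First I would observe that the unimodular phase factor $e^{ix\cdot\xi_0}$ in the explicit formula for $K_j$ plays no role in any $L^r$-norm; thus, setting $\psi:=\mathcal F^{-1}(\eta(|\cdot|))\in\mathcal S(\R^d)$, one has $|K_j(x)|=2^{-jd}|\psi(2^{-j}x)|$. A change of variables then yields
\begin{equation*}
  \|K_j\|_r = 2^{-jd}\cdot 2^{jd/r}\|\psi\|_r = 2^{-jd(1-1/r)}\|\psi\|_r
\end{equation*}
for every $r\in[1,\infty]$, with $\|\psi\|_r<\infty$ since $\psi$ is Schwartz.

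Next I would apply Young's convolution inequality in the form $\|K_j * f\|_q \leq \|K_j\|_r\|f\|_p$ with the choice $\frac{1}{r}=1+\frac{1}{q}-\frac{1}{p}\in[0,1]$, which is admissible precisely because $1\leq p\leq q\leq\infty$. Since $1-\frac{1}{r}=\frac{1}{p}-\frac{1}{q}$, the exponent in the previous display becomes $-jd(\frac{1}{p}-\frac{1}{q})$, and I obtain
\begin{equation*}
  \|T_j f\|_q \leq \|K_j\|_r\|f\|_p \lesssim 2^{-jd(\frac{1}{p}-\frac{1}{q})}\|f\|_p,
\end{equation*}
which is the desired bound. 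The implicit constant depends only on $\eta$ and $d$ through $\|\psi\|_r$, and is uniform in $j\in\Z$ and in $\xi_0\in\R^d$.

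There is no real obstacle here: the lemma is essentially a parabolic/isotropic scaling statement, and both the Schwartz decay of $\mathcal F^{-1}\eta(|\cdot|)$ and the irrelevance of the modulation $e^{ix\cdot\xi_0}$ make the estimate routine. The only minor care needed is to verify that the Young exponent $r$ lies in $[1,\infty]$ for the full admissible range $1\leq p\leq q\leq\infty$, including the endpoint cases $p=1$ (where $r=q$) and $q=\infty$ (where $r=p'$), which are immediate from the relation $\frac{1}{r}=1-\frac{1}{p}+\frac{1}{q}$.
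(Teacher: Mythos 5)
Your proof is correct and is essentially identical to the paper's: both compute $\|K_j\|_r\les 2^{-jd/r'}$ by scaling (the modulation $e^{ix\cdot\xi_0}$ being harmless) and then apply Young's convolution inequality with $\frac{1}{r}=1+\frac{1}{q}-\frac{1}{p}$. Your extra check that $r\in[1,\infty]$ exactly when $1\leq p\leq q\leq\infty$ is a welcome (if routine) addition.
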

\begin{proof}
  For all $r\in [1,\infty]$ we have $\|K_j\|_r
    =  2^{-jd}  \|\mathcal F^{-1}\left( \eta(|\cdot|)\right)(2^{-j}\cdot)\|_r
    \les  2^{-j\frac{d}{r'}}$.
  Hence, for $1\leq p\leq q\leq \infty$ and $\frac{1}{r}:=1+\frac{1}{q}-\frac{1}{p}$ we get from Young's
  Convolution Inequality 
  $$
    \|T_j f\|_q
    \les \|K_j\|_r \|f\|_p
    \les  2^{-j\frac{d}{r'}} \|f\|_p
    \les  2^{-jd(\frac{1}{p}-\frac{1}{q})} \|f\|_p.   
  $$
\end{proof}

 In the following, we will need a multiplier theorem in $L^\mu(\R^d)$ for arbitrary $\mu\in
[1,\infty]$. The natural candidate -  Mikhlin's multiplier theorem
\cite[Theorem~6.1.6]{BerghLoefstrom1976} - is only available for $\mu\in (1,\infty)$.
In order to avoid tiresome separate discussions  we first provide a simple sufficient condition for a given
function $m:\R^d\to \R$ to  be a $L^\mu$-multiplier for all $\mu\in [1,\infty]$.
The following result essentially says that a function $m$ serves our purpose provided that its derivatives
grow a bit slower near zero and decay a bit faster near infinity compared to the requirements of Mikhlin's
multiplier theorem.

\begin{prop}\label{prop:multiplier}
  Let $d\in\N, k:= \lfloor d/2\rfloor +1$ and $m\in C^k(\R^d\sm\{0\})$. Then $m$ is an
  $L^\mu$ multiplier for all $\mu\in [1,\infty]$ provided that there is $\eps>0$
  such that
  $$
    |\partial^\alpha m(\xi)| \les \langle \xi\rangle^{-2\eps} |\xi|^{-k+\eps}
    \text{ for all }\alpha\in\N_0^d \text{ such that }|\alpha|=k.
  $$
\end{prop}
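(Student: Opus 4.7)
The plan is to prove the strictly stronger assertion that $\mathcal F^{-1}(m)\in L^1(\R^d)$; once this holds, Young's convolution inequality gives
\[
\|m(D)f\|_\mu = \|\mathcal F^{-1}(m)\ast f\|_\mu \les \|\mathcal F^{-1}(m)\|_1\, \|f\|_\mu
\qquad (\mu\in[1,\infty]),
\]
which is exactly the multiplier bound we want.

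We decompose $m$ dyadically using the function $\eta$ from~\eqref{eq:def_Tj}: for $j\in\Z$ set $m_j(\xi):=\eta(2^j|\xi|)\,m(\xi)$ and $K_j:=\mathcal F^{-1}(m_j)$, so $\supp(m_j)\subset\{|\xi|\in[2^{-j-1},2^{-j+1}]\}$ and $m=\sum_j m_j$. It suffices to show $\sum_{j\in\Z}\|K_j\|_1<\infty$. Rescale by $\tilde m_j(\xi):=m_j(2^{-j}\xi)=\eta(|\xi|)\,m(2^{-j}\xi)$; this is supported on a fixed unit annulus, and dilation invariance of the $L^1$-norm yields $\|K_j\|_1=\|\mathcal F^{-1}(\tilde m_j)\|_1$. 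Since $k=\lfloor d/2\rfloor+1>d/2$, the weight $\langle\cdot\rangle^{-k}$ belongs to $L^2(\R^d)$; Cauchy--Schwarz together with Plancherel then gives
\begin{align*}
\|\mathcal F^{-1}(\tilde m_j)\|_1
&\leq \|\langle\cdot\rangle^{-k}\|_{L^2}\,\|\langle\cdot\rangle^{k}\mathcal F^{-1}(\tilde m_j)\|_{L^2} \\
&\les \|\tilde m_j\|_{L^2}+\sum_{|\alpha|=k}\|\partial^\alpha\tilde m_j\|_{L^2}.
\end{align*}

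Because $\tilde m_j$ is supported on a set of fixed bounded measure, the $L^2$ norms reduce to sup norms. Applying the Leibniz rule to $\tilde m_j(\xi)=\eta(|\xi|)m(2^{-j}\xi)$, the decisive contribution comes from the top-order derivative hitting $m$, i.e.\ $2^{-jk}(\partial^\alpha m)(2^{-j}\xi)$ with $|\alpha|=k$, which the hypothesis bounds by
\[
\bigl|2^{-jk}(\partial^\alpha m)(2^{-j}\xi)\bigr|
 \les 2^{-jk}\,\langle 2^{-j}\xi\rangle^{-2\eps}\,|2^{-j}\xi|^{-k+\eps}
 \les 2^{-|j|\eps} \qquad (|\xi|\sim 1).
\]
The computation is symmetric: for $j\to+\infty$ (low frequencies near the origin) the factor $|2^{-j}\xi|^{-k+\eps}$ produces the $\eps$-gain, while for $j\to-\infty$ (high frequencies) it is $\langle 2^{-j}\xi\rangle^{-2\eps}$ that does so. The remaining terms in the Leibniz expansion obey at least as good bounds under the same hypothesis. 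Summing the geometric series $\sum_{j\in\Z}2^{-|j|\eps}<\infty$ yields $\mathcal F^{-1}(m)\in L^1(\R^d)$.

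The main obstacle is precisely the endpoint nature of the proposition at $\mu=1$ (and dually $\mu=\infty$), where the classical Mikhlin multiplier theorem is known to fail. Under the pure Mikhlin bound $|\partial^\alpha m(\xi)|\les|\xi|^{-|\alpha|}$ alone, the dyadic pieces above satisfy only $\|K_j\|_1=O(1)$, which is not summable in $j$. The extra $\eps$-decay built into both ends of the present hypothesis, strictly stronger than $|\xi|^{-k}$ at infinity and strictly milder than the Mikhlin singularity near the origin, is calibrated precisely to convert this borderline failure into the summable estimate $2^{-|j|\eps}$ above.
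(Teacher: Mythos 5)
Your overall route is genuinely different from the paper's. You use a Littlewood--Paley decomposition, rescale each piece to a unit annulus, and pass from $L^1$ to weighted $L^2$ via Cauchy--Schwarz with the weight $\langle\cdot\rangle^{-k}\in L^2$ (a Bernstein-type argument), then sum a geometric series. The paper instead works globally: it observes $\mathcal F(x^\alpha\rho)=\partial^\alpha m\in L^{\sigma_1}\cap L^{\sigma_2}$ for two exponents $\sigma_1<d/k<\sigma_2$ slightly on either side of $d/k$, applies Hausdorff--Young to get $|x|^k\rho\in L^{\sigma_1'}\cap L^{\sigma_2'}$, and pairs this with $|x|^{-k}\in L^{\sigma_1}+L^{\sigma_2}$ by H\"older. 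The $\eps$-room on both sides of the hypothesis is exploited in the same spirit in both arguments, and both correctly target $\mathcal F^{-1}m\in L^1$ followed by Young's inequality.

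There is, however, a genuine gap in your proof: the sentence ``the remaining terms in the Leibniz expansion obey at least as good bounds under the same hypothesis'' is not justified and cannot be derived from the stated hypothesis. Expanding $\partial^\alpha\tilde m_j$ by Leibniz produces terms $\partial^{\alpha-\beta}\bigl(\eta(|\cdot|)\bigr)(\xi)\,2^{-j|\beta|}(\partial^\beta m)(2^{-j}\xi)$ with $|\beta|<k$, and your estimate also requires $\|\tilde m_j\|_{L^2}$, i.e.\ a pointwise bound on $m$ itself on each annulus. The proposition assumes control of the derivatives of order \emph{exactly} $k$ and nothing else; this gives no pointwise information on $m$ or on $\partial^\beta m$ for $|\beta|<k$ (for instance, any polynomial of degree $<k$ has vanishing order-$k$ derivatives, so the hypothesis is blind to such contributions). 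In every application in the paper the symbols do satisfy full Mikhlin-type bounds of all orders $\leq k$ with the extra $\eps$-decay, so your argument goes through there, but as a proof of the proposition as stated it is incomplete. The paper's proof is arranged precisely so that only the order-$k$ derivatives ever enter: it never differentiates a product, but instead writes $\rho=|x|^{-k}\cdot\bigl(|x|^{k}\rho\bigr)$ and controls the second factor directly through $\partial^\alpha m$. To repair your argument you would either need to add the lower-order bounds to the hypothesis (which is how the proposition is actually used), or restructure the dyadic estimate so that only $\||x|^k K_j\|_2$ appears, splitting the spatial integral at a $j$-dependent radius; the latter essentially reproduces the paper's H\"older step in disguise.
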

\begin{proof}
  We show that the assumptions imply that $\rho:= \mathcal F^{-1}m$ is integrable. Once this is shown, the
  result follows from Young's Convolution Inequality because of
  $$
    \|\mathcal F^{-1}(m\hat f)\|_\mu
    =  \|\rho\ast   f\|_\mu
    \leq \|\rho\|_1 \| f\|_\mu.
  $$
  We may  w.l.o.g. assume
  $0<\eps\leq 2k-d$. For all $\alpha\in\N_0^d, |\alpha|=k$ we have
  $$
    |\mathcal F\left( (-ix)^\alpha\rho\right)(\xi)|
    = |\partial^\alpha \hat \rho(\xi)|
    = |\partial^\alpha m(\xi)|
    \les \langle \xi\rangle^{-2\eps} |\xi|^{-k+\eps}.
  $$
  Hence, $\mathcal F(x^\alpha\rho)$ belongs to the space $L^{\sigma_1}(\R^d)\cap L^{\sigma_2}(\R^d)$ where
  $\sigma_1:=\frac{d}{k+\eps/2},\sigma_2:=\frac{d}{k-\eps/2}$. Our choice for $\eps$ implies
  $1\leq \sigma_1\leq \sigma_2\leq 2$, so the Hausdorff-Young Inequality gives
  $$
    |x|^k \rho \in L^{\sigma_1'}(\R^d)\cap L^{\sigma_2'}(\R^d).
  $$
  To conclude $\rho\in L^1(\R^d)$ with H\"older's Inequality it remains to check
  $$
    |x|^{-k} \in L^{\sigma_1}(\R^d)+L^{\sigma_2}(\R^d).
  $$
  But this follows from $|x|^{-k}\ind_{|x|\leq 1} \in L^{\sigma_1}(\R^d)$ and $|x|^{-k}\ind_{|x|>1}\in
  L^{\sigma_2}(\R^d)$ due to $k\sigma_1<d<k\sigma_2$, which finishes the proof.
\end{proof}

  Next we provide our estimates in the large frequency regime. To this end we analyze the mapping properties 
  of $\mathcal T_j u:= T_j(u_2)$ where $T_j$ and $u_2=\mathcal F^{-1}((1-\tau)\hat u)$ were defined in
  \eqref{eq:def_Tj},\eqref{eq:u1u2}, respectively.

  \begin{prop} \label{prop:dyadicEstimatesI}
    Assume $d\in\N$ and (A2) with $s_1,s_2\in\R$. Then, for $i=1,2$, 
     $$
       \|\mathcal T_ju\|_{q} \les  2^{j(s_i-d(\frac{1}{p}-\frac{1}{q}))} \|P_i(D)u\|_p.
        \qquad \text{for }1\leq p\leq  q\leq \infty,\;j\in\Z.
     $$
  \end{prop}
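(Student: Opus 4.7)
My plan is to realise $\mathcal T_j u$ as a convolution acting on $P_i(D) u$. Using $\widehat{u_2} = (1-\tau)\hat u$ and $\widehat{P_i(D)u} = P_i \hat u$, one has
\begin{equation*}
\widehat{\mathcal T_j u}(\xi) = \mu_j(\xi)\, \widehat{P_i(D)u}(\xi), \qquad \mu_j(\xi) := \frac{\eta(2^j|\xi-\xi_0|)(1-\tau(\xi))}{P_i(\xi)},
\end{equation*}
where $\mu_j$ is well defined since $(1-\tau)$ vanishes in a neighbourhood of $S$, where $P_i$ vanishes. Young's convolution inequality gives $\|\mathcal T_j u\|_q \leq \|\check\mu_j\|_r \|P_i(D)u\|_p$ with $1/r = 1 + 1/q - 1/p$, reducing the claim to the kernel estimate $\|\check\mu_j\|_r \les 2^{j(s_i - d/r')}$.

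To prove this I would perform a rescaling. Setting $\nu_j(\zeta) := \mu_j(\xi_0 + 2^{-j}\zeta)$, a change of variables yields $\check\mu_j(x) = 2^{-jd} e^{ix\cdot\xi_0}\check\nu_j(2^{-j}x)$ and therefore $\|\check\mu_j\|_r = 2^{-jd/r'}\|\check\nu_j\|_r$. Writing $1/P_i = Q_i/\langle\cdot\rangle^{s_i}$ according to (A2) and noting that $\langle\xi_0+2^{-j}\zeta\rangle^{-s_i}\sim 2^{js_i}$ on $|\zeta|\in[1/2,2]$ for $j$ sufficiently negative, the natural size of $\nu_j$ is $2^{js_i}$. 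The task thus reduces to showing $\|\check{\tilde\nu}_j\|_r \les 1$ uniformly in $j$ for the normalised symbol $\tilde\nu_j := 2^{-js_i}\nu_j$.

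The core step is a uniform Sobolev bound $\|\tilde\nu_j\|_{H^k} \les 1$ with $k := \lfloor d/2\rfloor + 1$. Each derivative $\partial_\zeta$ produces a chain-rule factor $2^{-j}$, which is precisely compensated by a factor $2^j$ coming from the (A2)-bound $|\partial^\gamma Q_i(\xi)| \les \langle\xi\rangle^{-|\gamma|}$ (for $|\gamma|\leq k$) together with $|\partial^\tau\langle\xi\rangle^{-s_i}| \les \langle\xi\rangle^{-s_i-|\tau|}$, both evaluated at $|\xi|\sim 2^{-j}$. After absorbing the prefactor $2^{-js_i}$ this yields $|\partial^\alpha\tilde\nu_j(\zeta)| \les 1$ for every $|\alpha|\leq k$ uniformly in $j$, and together with $\supp\tilde\nu_j \subset \{|\zeta|\leq 2\}$ this gives $\|\tilde\nu_j\|_{H^k}\les 1$. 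Since $k > d/2$, the weighted Cauchy-Schwarz estimate $\|\check{\tilde\nu}_j\|_1 \leq \|(1+|\cdot|^2)^{-k/2}\|_{L^2}\|\tilde\nu_j\|_{H^k} \les 1$ holds, and combined with $\|\check{\tilde\nu}_j\|_\infty \leq \|\tilde\nu_j\|_1 \les 1$ and log-convexity of $L^r$-norms one concludes $\|\check{\tilde\nu}_j\|_r \les 1$ for all $r\in[1,\infty]$.

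The main obstacle is the delicate accounting of the powers of $2^j$ in the product-rule expansion of $\partial^\alpha\tilde\nu_j$: the cancellations between chain-rule factors and the polynomial decay rates provided by (A2) must balance exactly. The choice $k = \lfloor d/2\rfloor + 1$ is sharp, since it is the smallest integer for which the embedding $H^k \hookrightarrow \mathcal F L^1$ holds via weighted Cauchy-Schwarz; happily, this matches exactly the regularity provided by (A2). For the finitely many values of $j$ where the rescaling argument fails the bound is trivial; in particular, when $\xi_0\in S$ as in the intended application, one even has $\mathcal T_j u\equiv 0$ for $j\geq j_0$.
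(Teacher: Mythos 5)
Your proof is correct; it reorganizes the paper's argument rather than changing its substance, but the organization is genuinely different. The paper factors the dyadic symbol as $2^{js_i}\,\eta_i(2^j|\xi-\xi_0|)\,m_i(\xi)$ with $\eta_i(z)=\eta(z)|z|^{-s_i}$ and $m_i(\xi)=(1-\tau(\xi))|\xi-\xi_0|^{s_i}/P_i(\xi)$, disposes of the $j$-independent factor $m_i$ once and for all via the uniform $L^\mu$-multiplier criterion of Proposition~\ref{prop:multiplier} (itself proved by the same weighted Cauchy--Schwarz/$\mathcal F L^1$ device you invoke), and then quotes the model dyadic bound of Lemma~\ref{lem:Tdelta} for the remaining radial bump. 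You keep the full $j$-dependent symbol $\mu_j$, rescale, and check uniform $C^k$ bounds on the rescaled annulus directly; the factor $2^{js_i}$ then emerges from $\langle\xi\rangle^{-s_i}\sim 2^{js_i}$ on the $j$-th annulus, exactly where the paper extracts it through $|z|^{-s_i}$. The paper's factorization buys reusability of Lemma~\ref{lem:Tdelta} and Proposition~\ref{prop:multiplier}; your version is self-contained and makes the role of the Mikhlin-type hypotheses in (A2) transparent. Your treatment of the remaining cases is also consistent with the paper's setup: the operators are only used with $\xi_0\in S$, where $\mathcal T_ju=0$ for $j\geq j_0$, so only $j\searrow-\infty$ matters and the finitely many $j$ for which $\langle\xi_0+2^{-j}\zeta\rangle\sim2^{-j}$ fails are harmless since each $\mu_j$ is a fixed compactly supported $C^k$ symbol. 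One cosmetic point: (A2) literally asserts $|\partial^\gamma Q_i|\les\langle\xi\rangle^{-|\gamma|}$ only for $|\gamma|\leq\lfloor d/2\rfloor$, with the stronger decay $\langle\xi\rangle^{-\eps-|\gamma|}$ at the top order $|\gamma|=k$; your use of the weaker bound for all $|\gamma|\leq k$ is therefore justified a fortiori.
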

    \begin{proof}
    In order to use Lemma~\ref{lem:Tdelta} for $\xi_0\in S$ we set $\eta_i(z):=\eta(z) |z|^{-s_i}$ for
    $z\in\R$.
    Then $\eta\in C_0^\infty(\R),0\notin \supp(\eta)$ implies $\eta_i\in
    C_0^\infty(\R)$ for $i=1,2$. Moreover, we have for $i=1,2$ and $j\in\Z$
  \begin{align*}
    \mathcal T_j u
    &= \mathcal F^{-1}\left( \eta(2^j|\xi-\xi_0|) \hat u_2(\xi)\right) \\
    &=   \mathcal F^{-1}\left( \eta_i(2^j|\xi-\xi_0|)\, (2^j|\xi-\xi_0|)^{s_i}\,\hat
    u_2(\xi)\right)  \\
    &= 2^{js_i} \mathcal F^{-1}\left(\eta_i(2^j|\xi-\xi_0|) m_i(\xi) P_i(\xi)\hat u(\xi)\right)
  \end{align*}
  where $m_i(\xi) := (1-\tau(\xi))|\xi-\xi_0|^{s_i}/P_i(\xi)$. 
  Since $\tau$ is smooth and identically 1 near $\xi_0\in S$, a calculation shows that our assumptions on
  $P_i$ from (A2) imply that $m_i$ satisfies the assumptions of Proposition~\ref{prop:multiplier}. In fact, for
  $|\alpha|=k:=\lfloor d/2\rfloor +1$ and $Q_i,\eps>0$ as in assumption (A2),  
  \begin{align*}
    |\partial^\alpha m_i(\xi)|
    &\les \sum_{0\leq \gamma\leq \alpha} \vecII{\alpha}{\gamma} \left|\partial^{\alpha-\gamma} 
    \left((1-\tau(\xi))|\xi-\xi_0|^{s_i} \langle \xi\rangle^{-s_i}\right)\right| |\partial^\gamma Q_i(\xi)| \\
    &\les  1\cdot |\partial^\alpha Q_i(\xi)|  
     +  \sum_{0\leq \gamma< \alpha} \langle \xi\rangle^{-|\alpha-\gamma|-1} 
     |\partial^\gamma Q_i(\xi)| \\
    &\les \langle \xi\rangle^{-\eps-|\gamma|}    
     +  \langle \xi\rangle^{-|\alpha-\gamma|-1} \langle \xi\rangle^{-|\gamma|} \\
    &\les \langle \xi\rangle^{-\min\{1,\eps\}-|\alpha|}.    
  \end{align*} 
  Here we used the Leibniz rule. So, by Proposition~\ref{prop:multiplier}, $m_i$ is an $L^\mu$-multiplier
  for all $\mu\in [1,\infty]$.
  Hence, Lemma~\ref{lem:Tdelta} yields for all $q\in [p,\infty]$ 
  \begin{align*}
    \|\mathcal T_ju\|_{q}
    &\les 2^{j s_i} \|\mathcal F^{-1}(\eta_i(2^j|\xi-\xi_0|) m_i(\xi)
    \widehat{P_i(D)u}(\xi))\|_{q}  \\
    &\les 2^{j(s_i-d(\frac{1}{p}-\frac{1}{q}))} \|\mathcal F^{-1}(  
    m_i(\xi) \widehat{P_i(D)u}(\xi))\|_p \\
    &\les 2^{j(s_i-d(\frac{1}{p}-\frac{1}{q}))} \|P_i(D)u\|_p.
  \end{align*}
  \end{proof}
  
  Next we use these dyadic estimates to prove estimates of Gagliardo-Nirenberg type. We deduce our
  results from a detailed analysis of the special case $P_i(D)=\bra{D}^{s_i}$ for $s_1,s_2\in\R$. This is
  possible due to
  \begin{equation} \label{eq:BesselPrototype}
    \|\bra{D}^{s_i}u_2\|_p \les \|P_i(D)u\|_p \qquad (1\leq p\leq \infty)
  \end{equation}
  for symbols $P_1,P_2$ as in (A2) thanks to Proposition~\ref{prop:multiplier}.  So we collect some mapping
  properties of the Bessel potential operators $\bra{D}^{-s}$ where $s>0$.
  
  \begin{prop} \label{prop:BesselPotentials}
    Assume $d\in\N, s>0$ and $p,q,r\in [1,\infty], u\in\mathcal S(\R^d)$. 
    \begin{itemize} 
      \item[(i)] If $0\leq \frac{1}{p}-\frac{1}{q}<\frac{s}{d}$ then  $\|u\|_q \les \|\bra{D}^su\|_p$.
      \item[(ii)] If $0\leq \frac{1}{p}-\frac{1}{q}=\frac{s}{d}$ 
      and $1<p,q<\infty$ then $\|u\|_{q,r} \les \|\bra{D}^s u\|_{p,r}$ and $\|u\|_q\les \|\bra{D}^s u\|_p$.
      \item[(iii)] If $0\leq \frac{1}{p}-\frac{1}{q}=\frac{s}{d}$ and $s=d=1$ 
      then $\|u\|_\infty \les \|\bra{D} u\|_1$.      
      \item[(iv)] If  $0\leq \frac{1}{p}-\frac{1}{q}=\frac{s}{d}$ and $1=p<q<\infty$ then  
      $\|u\|_{q,\infty} \les \|\bra{D}^s u \|_1$. 
    \end{itemize}
  \end{prop}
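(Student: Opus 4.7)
The plan is to write $u=G_s\ast \bra{D}^s u$ where $G_s:=\mathcal F^{-1}(\bra{\cdot}^{-s})$ is the Bessel kernel, and to deduce all four estimates from the classical pointwise behaviour of $G_s$. Recall that $G_s$ is positive and smooth off the origin, decays at infinity as $G_s(x)\les e^{-|x|/2}$, and at the origin satisfies $G_s(x)\sim c_{d,s}|x|^{s-d}$ if $0<s<d$, a logarithmic singularity if $s=d$, and $G_s\in L^\infty(\R^d)$ if $s>d$. Combining the two asymptotics yields the global pointwise bound $G_s(x)\les |x|^{s-d}$ on $\R^d$ whenever $0<s<d$.

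For (i) I would apply Young's convolution inequality with exponent $\sigma\in[1,\infty]$ defined by $\frac{1}{\sigma}=1-\frac{1}{p}+\frac{1}{q}$. The asymptotics above say that $G_s\in L^\sigma(\R^d)$ iff $\sigma(d-s)<d$ (with the only constraint $\sigma<\infty$ in the borderline case $s=d$, and no constraint at all when $s>d$), which is exactly $\frac{1}{p}-\frac{1}{q}<\frac{s}{d}$. Hence $\|u\|_q \leq \|G_s\|_\sigma\|\bra{D}^s u\|_p\les \|\bra{D}^s u\|_p$.

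For (ii), the equality $\frac{1}{p}-\frac{1}{q}=\frac{s}{d}$ together with $1<p,q<\infty$ forces $0<s<d$, so the global bound $G_s(x)\les |x|^{s-d}$ reduces the analysis to the Riesz potential. The Hardy--Littlewood--Sobolev inequality for the Riesz potential in Lorentz spaces (O'Neil) then gives $\|G_s\ast f\|_{q,r}\les \|f\|_{p,r}$ for every $r\in[1,\infty]$; the $L^p\to L^q$ statement is the standard HLS obtained by specializing to $r=p$. For (iii), one has the explicit formula $G_1(x)=\tfrac{1}{2}e^{-|x|}\in L^\infty(\R)$, so Young's inequality with $p=1,\sigma=\infty$ gives the bound immediately. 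For (iv), equality with $q<\infty$ again forces $0<s<d$, and $G_s(x)\les |x|^{s-d}$ yields $G_s\in L^{q,\infty}(\R^d)$ for $q=\frac{d}{d-s}$; the weak-type Young inequality $L^{q,\infty}\ast L^1\hookrightarrow L^{q,\infty}$ (equivalently, the weak endpoint of HLS) then gives $\|G_s\ast f\|_{q,\infty}\les \|f\|_1$.

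The only genuine work is the identification of the pointwise behaviour of $G_s$ and the invocation of the correct Lorentz-space and weak-type versions of HLS for (ii) and (iv); everything else is a direct application of Young. I expect part (ii) in full Lorentz generality to be the step most reliant on a classical black box (O'Neil's refinement of HLS), whereas (i), (iii), (iv) are essentially one-line consequences of the kernel bound.
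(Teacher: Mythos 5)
Parts (i), (ii) and (iv) of your argument are sound: writing $u=G_s\ast\bra{D}^su$ with $G_s=\mathcal F^{-1}(\bra{\cdot}^{-s})$ and combining the standard asymptotics of the Bessel kernel with Young's inequality, O'Neil's convolution inequality and the weak-type Young inequality $L^{q,\infty}\ast L^1\to L^{q,\infty}$ gives exactly these estimates. The paper simply cites \cite[Corollary~1.2.6]{Graf_Modern} for (i), (iv) and the strong bound in (ii), and obtains the Lorentz bounds in (ii) by real interpolation of nearby strong bounds rather than via O'Neil; the two routes are interchangeable and you are essentially reproving the cited result.

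Part (iii) contains a genuine error. You claim $G_1(x)=\tfrac12 e^{-|x|}$ in dimension one, but $\tfrac12e^{-|x|}$ is the kernel of $\bra{D}^{-2}=(1-\partial_x^2)^{-1}$, not of $\bra{D}^{-1}$. The kernel of $\bra{D}^{-1}$ for $d=1$ is a constant multiple of the modified Bessel function $K_0(|x|)$, which --- exactly as your own list of asymptotics predicts in the borderline case $s=d$ --- has a logarithmic singularity at the origin and hence is not in $L^\infty(\R)$. So Young's inequality with $\sigma=\infty$ is unavailable, and no sharpening of the kernel bound can rescue this route: since $\{\bra{D}u:u\in\mathcal S(\R)\}=\mathcal S(\R)$, the inequality $\|u\|_\infty\les\|\bra{D}u\|_1$ is, by duality, equivalent to $G_1\in L^\infty(\R)$. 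The paper therefore argues entirely differently at this endpoint, combining the one-dimensional embedding $\|u\|_\infty\le\tfrac12\|u'\|_1$ with the multiplier bound $\|u'\|_1=\|m(D)\bra{D}u\|_1\les\|\bra{D}u\|_1$ for $m(\xi)=\xi\bra{\xi}^{-1}$; you would need to supply an argument of that type rather than a convolution estimate. (Your observation that $G_1$ is log-unbounded in fact shows how delicate this endpoint is: the multiplier step hinges on $\mathcal F^{-1}(\xi\bra{\xi}^{-1})=G_1'$, which behaves like a principal-value kernel near the origin, so this case merits careful scrutiny rather than a one-line Young argument.)
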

  \begin{proof}
    The parts (i),(iv) and the second part of (ii) are given in \cite[Corollary~1.2.6]{Graf_Modern}; the
    Lorentz space mapping properties from (ii) follow from real interpolation. The estimate (iii) follows
    from 
    $$ 
      \|u\|_\infty
      \les \|u'\|_1 
      = \|m(D)(\bra{D} u)\|_1
      \les \|\bra{D}u\|_1\qquad (u\in\mathcal S(\R)).
    $$
    Here we used that $m(\xi):= \xi(1+|\xi|^2)^{-1/2}$ satisfies the assumptions of
    Proposition~\ref{prop:multiplier}.   
  \end{proof}
    
 We finally use these estimates to prove Gagliardo-Nirenberg inequalities for large frequencies. 
 
\begin{prop}  \label{prop:LargeFreqInterpolated}
  Assume $d\in\N$, $\kappa\in [0,1]$ and (A2) for $s_1,s_2\in\R$. Then 
  \begin{equation} \label{eq:GNLargeFreq}
     \|u_2\|_q  \les \|P_1(D)u \|_{r_1}^{1-\kappa} \|P_2(D)u\|_{r_2}^\kappa
     \qquad (u\in\mathcal S(\R^d))
  \end{equation}
  holds provided that the exponents $q,r_1,r_2\in [1,\infty]$ satisfy 
  $0\leq \frac{1-\kappa}{r_1}+\frac{\kappa}{r_2}-\frac{1}{q}   \leq   \frac{\ov s}{d}$ as well as the
  following conditions in the endpoint case $\frac{1-\kappa}{r_1}+\frac{\kappa}{r_2}-\frac{1}{q}  = 
  \frac{\ov s}{d}$:
  \begin{itemize}
    \item[(i)] if $q=\infty$ then $\frac{1}{r_1}- \frac{s_1}{d}\neq 0 \neq \frac{1}{r_2}-\frac{s_2}{d}$ or
    $r_1=r_2=\infty,s_1=s_2=0$ or $d=1, (r_1,r_2)=(\frac{1}{s_1},\frac{1}{s_2}), s_1,s_2\in\{0,1\}$,
    \item[(ii)] if $1<q<\infty$ and $\frac{1}{r_1}-\frac{s_1}{d}=\frac{1}{q}=\frac{1}{r_2}-\frac{s_2}{d}$ and
    if $r_1=1,\kappa<1$   then \\ $1<r_2<q,\, \kappa\geq \frac{r_2}{q}$ or 
    $r_2=\infty,\frac{1}{q}\leq \kappa\leq \frac{1}{q'}$,
    \item[(iii)] if $1<q<\infty$ and $\frac{1}{r_1}-\frac{s_1}{d}=\frac{1}{q}=\frac{1}{r_2}-\frac{s_2}{d}$
    if $r_2=1,\kappa>0$   then \\ $1<r_1<q,\,1-\kappa\geq \frac{r_1}{q}$ or $r_1=\infty, 
     \frac{1}{q}\leq 1-\kappa\leq \frac{1}{q'}$.
  \end{itemize}
\end{prop}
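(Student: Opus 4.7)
My plan is a dyadic frequency decomposition of $u_2$ around a chosen point $\xi_0\in S$, followed by summation of the pieces either directly or through Bourgain's lemma. Setting $\mathcal T_j u:=T_j u_2$, the fact that $\hat u_2=(1-\tau)\hat u$ vanishes in a neighborhood of $S$ produces some $j_0\in\Z$ with $\mathcal T_j u=0$ for $j\geq j_0$, and hence $u_2=\sum_{j\leq j_0}\mathcal T_j u$. Proposition~\ref{prop:dyadicEstimatesI} yields for $i=1,2$ the bounds
\[
  \|\mathcal T_j u\|_q\les 2^{j\beta_i}\|P_i(D)u\|_{r_i},\qquad \beta_i:=s_i-d\!\left(\tfrac{1}{r_i}-\tfrac{1}{q}\right),
\]
and with $\bar s:=(1-\kappa)s_1+\kappa s_2$, $\frac{1}{\bar r}:=\frac{1-\kappa}{r_1}+\frac{\kappa}{r_2}$ the hypothesis rewrites as $(1-\kappa)\beta_1+\kappa\beta_2=\bar s-d(\tfrac{1}{\bar r}-\tfrac{1}{q})\geq 0$.

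If $(1-\kappa)\beta_1+\kappa\beta_2>0$, multiplying the two dyadic bounds by a geometric mean gives $\|\mathcal T_j u\|_q\les 2^{j[(1-\kappa)\beta_1+\kappa\beta_2]}\|P_1(D)u\|_{r_1}^{1-\kappa}\|P_2(D)u\|_{r_2}^\kappa$, and summing the convergent geometric series over $j\leq j_0$ closes the estimate. If instead $(1-\kappa)\beta_1+\kappa\beta_2=0$ with $\beta_1,\beta_2\neq 0$, then after reindexing $j\mapsto j_0-j'$ I would apply Bourgain's summation Lemma~\ref{lem:SummationLemma}, equation~\eqref{eq:SummationI}, with the trivial target pair $Y_1=Y_2=L^q$ (so that $(L^q,L^q)_{\kappa,\infty}=L^q$) and with $X_i$ the completions of $\mathcal S(\R^d)$ under $\|P_i(D)\cdot\|_{r_i}$; this produces $\|u_2\|_q\les\|u\|_{(X_1,X_2)_{\kappa,1}}$, and the interpolation property \eqref{eq:interpolationfunctor} finishes the argument. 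This subsumes case (i) under its first alternative.

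The degenerate endpoints where both $\beta_i=0$ require more care, since the dyadic bounds are then uniform in $j$. When $1<q,r_1,r_2<\infty$, Proposition~\ref{prop:BesselPotentials}(ii) combined with \eqref{eq:BesselPrototype} gives $\|u_2\|_q\les\|P_i(D)u\|_{r_i}$ individually for each $i$, so $\|u_2\|_q\les\min\{A,B\}\leq A^{1-\kappa}B^\kappa$ is immediate. The two trivial alternatives inside (i) are handled by a direct $(1-\tau)(D)$ multiplier estimate and by Proposition~\ref{prop:BesselPotentials}(iii) together with \eqref{eq:BesselPrototype}, respectively. In the genuine endpoint cases (ii),(iii), say $r_1=1$, I would refine the dyadic estimates by sending $\mathcal T_j u$ into two different target spaces $L^{q_1},L^{q_2}$ with $\frac{1}{q}=\frac{1-\kappa}{q_1}+\frac{\kappa}{q_2}$, so that the new decay exponents $\alpha_i:=d(\frac{1}{q_i}-\frac{1}{q})$ are nonzero and of opposite sign. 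Bourgain's lemma then gives a weak bound $\|u_2\|_{q,\infty}\les\|P_1(D)u\|_1^{1-\kappa}\|P_2(D)u\|_{r_2}^\kappa$, which I would upgrade to a strong $L^q$ bound by Lorentz-space interpolation against the bound $\|u_2\|_{q,r_2}\les\|P_2(D)u\|_{r_2}$ coming from Proposition~\ref{prop:BesselPotentials}(ii) (with a Riesz-dual variant when $r_2=\infty$).

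The main obstacle lies precisely in that last upgrade: the Lorentz inclusion $L^{q,r_2/\kappa}\hookrightarrow L^q$ needed to pass from $L^{q,\infty}$ to $L^q$ is valid only when $\kappa\geq r_2/q$, which is exactly the explicit threshold appearing in (ii) (symmetrically for (iii)); the endpoints $r_i\in\{1,\infty\}$ need further separate discussions to fit inside Bourgain's summation framework. Verifying that the stated $\kappa$-bands are precisely what the combined Bourgain/Lorentz interpolation produces is the central bookkeeping challenge of the proof.
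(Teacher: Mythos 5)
Your skeleton matches the paper's: the dyadic decomposition $u_2=\sum_{j\le j_0}\mathcal T_ju$, the bounds of Proposition~\ref{prop:dyadicEstimatesI}, geometric summation off the endpoint, Bourgain's lemma \eqref{eq:SummationI} at the endpoint when $\beta_1,\beta_2\neq 0$, Sobolev embedding in the interior degenerate case, and the Lorentz interpolation $L^q=(L^{q,\infty},L^{q,\kappa q})_{\kappa,q}$ with the threshold $\kappa\geq r_2/q$ for case (ii) with $1<r_2<q$ --- all of this is how the paper proceeds. Two genuine gaps remain, however. First, every summation step you describe measures the dyadic pieces in $L^q$ itself, which by Proposition~\ref{prop:dyadicEstimatesI} requires $r_1,r_2\leq q$; the hypothesis only gives $q\geq \ov r$ where $\frac{1}{\ov r}=\frac{1-\kappa}{r_1}+\frac{\kappa}{r_2}$, and exponents with $\max\{r_1,r_2\}>q$ (e.g.\ $r_2=\infty$, $q<\infty$) are admissible. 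Off the endpoint this is repaired by sending $\mathcal T_j$ into two different targets $L^{q_1},L^{q_2}$ and using \eqref{eq:SummationII}; at the endpoint with $\beta_1\neq\beta_2$ the same device only yields the restricted weak-type bound $\|u_2\|_{q,\infty}\les\|u\|_{(X_1,X_2)_{\kappa,1}}$, and passing to the strong $L^q$ bound needs an extra reiteration step in which these weak bounds at perturbed parameters $(\tilde q,\tilde\kappa)$, $(q^*,\kappa^*)$ are interpolated with each other (the paper's Step~4). You never supply this upgrade outside of (ii),(iii), and there it is misapplied: interpolating the mixed bound $\|u_2\|_{q,\infty}\les A^{1-\kappa}B^{\kappa}$ against $\|u_2\|_{q,r_2}\les B$ cannot reproduce the exponents $(1-\kappa,\kappa)$; one must interpolate the two pure bounds $\|u\|_{q,\infty}\les\|\bra{D}^{s_1}u\|_1$ (Proposition~\ref{prop:BesselPotentials}~(iv)) and $\|u\|_{q,\kappa q}\les\|u\|_{q,r_2}\les\|\bra{D}^{s_2}u\|_{r_2}$.

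Second, the alternatives $r_2=\infty$ in (ii) and $r_1=\infty$ in (iii) are not a ``Riesz-dual variant'' of the Lorentz argument. There Proposition~\ref{prop:BesselPotentials}~(ii) is unavailable: with $\frac{1}{r_2}-\frac{s_2}{d}=\frac{1}{q}$ and $r_2=\infty$ one has $s_2=-d/q$, and $\bra{D}^{-d/q}$ does not map $L^\infty(\R^d)$ into any Lorentz space over $L^q$. The paper instead proves the endpoint $\kappa=\frac{1}{q}$ from the duality identity $\|u\|_{q,2}^2\les\|\bra{D}^{d/q'}u\|_1\,\|\bra{D}^{-d/q}u\|_\infty$ combined with Lorentz interpolation, and the endpoint $\kappa=\frac{1}{q'}$ from Stein's interpolation theorem applied to the analytic family $\bra{D}^{s/2-d/q}$ between $L^2(\R^d)$ and $\BMO(\R^d)$, using that $\bra{D}^{it}:L^\infty\to\BMO$ has polynomially bounded norm. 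This $\BMO$/Stein input is a missing idea rather than bookkeeping, so your argument does not close cases (ii),(iii) in their $r_i=\infty$ branches.
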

\begin{proof}
  As mentioned before, it is sufficient to prove the estimates in the prototpyical case
  $P_i(D)=\bra{D}^{s_i}$. So the case $\kappa\in\{0,1\}$ is covered by
  Proposition~\ref{prop:BesselPotentials}~(i),(ii),(iii). So we may concentrate on $\kappa\in (0,1)$ in the
  following. We combine Proposition~\ref{prop:dyadicEstimatesI} and Lemma~\ref{lem:SummationLemma} for
  the Bessel potential spaces $X_i:= P_i(D)^{-1}L^{r_i}(\R^d) = \bra{D}^{-s_i}L^{r_i}(\R^d)$ and $i=1,2$. Here
  we use the identity
   \begin{align*}
    u_2 = \sum_{j=-\infty}^{j_0} \mathcal T_j u
    \quad\text{where}\quad
    \|\mathcal T_ju\|_{q_i}\les 2^{j(s_i-d(\frac{1}{r_i}-\frac{1}{q_i}))} \|u\|_{X_i} \qquad (j\in\Z,
    1\leq r_i\leq q_i\leq \infty),
  \end{align*}
  see Proposition~\ref{prop:dyadicEstimatesI}.
  Our strategy is as follows. We first prove apply Lemma~\ref{lem:SummationLemma} to get strong bounds. This
  will cover all non-endoint cases $0\leq \frac{1-\kappa}{r_1}+\frac{\kappa}{r_2}-\frac{1}{q}<\frac{\ov
  s}{d}$ as well as the endpoint cases involving $q\in\{1,\infty\}$.
  The remaining discussion for $1<q<\infty$ and $1<r_1,r_2<\infty$ can be taken from the literature, but
  the analysis for $\{r_1,r_2\}\cap \{r_1,r_2\}\neq \emptyset$ is more delicate. 
  We will first address the case $\frac{1}{r_1}-\frac{1}{r_2}=\frac{s_1-s_2}{d}$ where we prove our claim using complex and real
  interpolation theory. Finally, in the case $\frac{1}{r_1}-\frac{1}{r_2}\neq \frac{s_1-s_2}{d}$ we will first deduce
  restricted weak-type bounds from Lemma~\ref{lem:SummationLemma} and upgrade them to strong bounds by
  interpolating the restricted weak-type bounds with each other. We will need in the
  following that our assumptions imply $\ov s\geq 0$.

  \medskip

  \textbf{Step 1:}\;  We start the interpolation procedure with (non-endpoint) exponents satisfying
  \begin{align}\label{eq:LargeFreqI}
    0\leq \frac{1-\kappa}{r_1}+\frac{\kappa}{r_2}-\frac{1}{q}<  \frac{\ov s}{d}.
  \end{align}
  In that case the interpolation estimate~\eqref{eq:SummationII} with 
  $(Y_1,Y_2,\theta,r):=(L^{q_1}(\R^d),L^{q_2}(\R^d),\kappa,q)$
  gives the bound
  $$
    \| u_2\|_q
    = \|\sum_{j=-\infty}^{j_0} \mathcal T_j u\|_q
     \stackrel{\eqref{eq:SummationII}}\les \|u\|_{(X_1,X_2)_{\kappa,q}}
      \stackrel{\eqref{eq:interpolationfunctor}}\les
     \| \bra{D}^{s_1}u\|_{r_1}^{1-\kappa} \| \bra{D}^{s_2} u\|_{r_2}^\kappa.
  $$
  Here, \eqref{eq:SummationII} applies because  \eqref{eq:LargeFreqI} allows to   find $q_i\in
  [r_i,\infty]$ such that
  \begin{align*}
    (1-\kappa)\left( s_1-d\left(\frac{1}{r_1}-\frac{1}{q_1}\right)\right)
    + \kappa \left( s_2-d\left(\frac{1}{r_2}-\frac{1}{q_2}\right)\right)
    > 0,\quad
    \frac{1}{q}=\frac{1-\kappa}{q_1}+\frac{\kappa}{q_2}.
  \end{align*}
  So the claim is  proved for all non-endpoint exponents given by~\eqref{eq:LargeFreqI}.
  \medskip

  It remains to discuss the endpoint case $0\leq \frac{1-\kappa}{r_1}+\frac{\kappa}{r_2}-\frac{1}{q} =
  \frac{\ov s}{d}$. Using~\eqref{eq:SummationI} for $Y_1=Y_2=L^q(\R^d)$   we get the claim for all exponents
  satisfying
  \begin{align}\label{eq:LargeFreqII}
    &0\leq \frac{1-\kappa}{r_1}+\frac{\kappa}{r_2}-\frac{1}{q} = \frac{\ov s}{d}
    \quad\text{and}\quad q\geq \max\{r_1,r_2\},\; \frac{1}{r_1}-\frac{s_1}{d}\neq \frac{1}{q} \neq
    \frac{1}{r_2}-\frac{s_2}{d}.
  \end{align}
  Here the latter two inequalities correspond to $\beta_1,\beta_2\neq 0$ in
  Lemma~\ref{lem:SummationLemma}.  From this we infer that  the claimed endpoint estimates hold for $q\in\{1,\infty\}$  via the following case
  distinction:
  \begin{itemize}
    \item Case $q=1$: \quad   $r_1=r_2=1,s_1=s_2=0$ is trivial,
    \item Case $q=1$: \quad   $r_1=r_2=1,\ov s=0,s_1\neq 0\neq s_2$ is covered by~\eqref{eq:LargeFreqII},
    \item Case $q=\infty$: \quad $r_1=r_2=\infty, s_1=s_2=0$ is trivial,
    \item Case $q=\infty$: \quad $\frac{1}{r_1}-\frac{s_1}{d}\neq 0\neq
    \frac{1}{r_2}-\frac{s_2}{d}$ is covered by~\eqref{eq:LargeFreqII}, 
    \item Case $q=\infty$: \quad $(d,r_1,r_2)=(1,\frac{1}{s_1},\frac{1}{s_2}), s_1,s_2\in\{0,1\}$ 
    is covered by Proposition~\ref{prop:BesselPotentials}~(iii).
  \end{itemize}
  These are all cases involving $q\in\{1,\infty\}$ and in particular claim (i) is proved. 
  So we are left with those endpoint estimates for $1<q<\infty$ that
  are not covered by~\eqref{eq:LargeFreqII}. 
  
  \medskip
  
   \textbf{Step 2:}\;  The claim holds for $1<r_1,r_2<\infty$ due to
  $$
    \|u\|_q
    \les \|\bra{D}^{\ov s}u\|_{\ov r}
    \les \|\bra{D}^{s_1}u\|_{r_1}^{1-\kappa} \|\bra{D}^{s_2}u\|_{r_2}^{\kappa},
  $$
  where $\frac{1}{\ov r}:=\frac{1-\kappa}{r_1}+\frac{\kappa}{r_2}$.
  This is a consequence of Sobolev's Embedding Theorem \cite[Theorem~6.5.1]{BerghLoefstrom1976}
  and the complex interpolation result from~\cite[Theorem~6.4.5~(7)]{BerghLoefstrom1976}.
  So we may in the following assume $\{r_1,r_2\}\cap \{1,\infty\}\neq \emptyset$. As announced earlier, we
  first deal with $\frac{1}{r_1}-\frac{1}{r_2}=\frac{s_1-s_2}{d}$.

  \medskip
  
   \textbf{Step 3:}\;    So assume we are in the endpoint case with $1<q<\infty , \frac{1}{r_1}-\frac{1}{r_2}=\frac{s_1-s_2}{d}$,
  $r_1\leq r_2$ (w.l.o.g.) and $\{r_1,r_2\}\cap \{1,\infty\}\neq \emptyset$.  Then
  $\frac{1-\kappa}{r_1}+\frac{\kappa}{r_2}-\frac{1}{q}=\frac{\ov s}{d}$ implies $\frac{1}{r_1}-\frac{s_1}{d}= \frac{1}{q}=\frac{1}{r_2}-\frac{s_2}{d}$. We distinguish the following cases: 
  \begin{itemize}
    \item Case $r_1=1,r_2=1$:\; This case is excluded, so there is nothing to prove.
    \item Case $r_1=1,1<r_2<q$:\; By Proposition~\ref{prop:BesselPotentials}~(ii),(iv) 
    we have $\|u\|_{q,\infty} \les \|\bra{D}^{s_1}u\|_1$ as well as $\|u\|_{q,r_2}\les
    \|\bra{D}^{s_2}u\|_{r_2}$. Applying the  interpolation identity \cite[Theorem~5.3.1]{BerghLoefstrom1976}
    \begin{equation}\label{eq:InterpolationIdentity}
      L^q(\R^d)= \left(L^{q,\infty}(\R^d), L^{q,\kappa q}(\R^d)\right)_{\kappa,q}, \qquad \kappa\in (0,1],
    \end{equation}    
     we infer for all 
    $\kappa\in [\frac{r_2}{q},1]$ 
    \begin{align*}
      \|u\|_q 
      \les \|u\|_{q,\infty}^{1-\kappa}  \|u\|_{q,\kappa q}^\kappa 
      \les \|u\|_{q,\infty}^{1-\kappa}  \|u\|_{q,r_2}^\kappa 
      \les \|\bra{D}^{s_1}u\|_1^{1-\kappa} \|\bra{D}^{s_2}u\|_{r_2}^\kappa.
    \end{align*} 
    \item Case $r_1=1,r_2=\infty$:\; We have to prove \eqref{eq:GNLargeFreq} 
    for $\frac{1}{q}\leq \kappa\leq
    \frac{1}{q'}$. It is sufficient to prove the claim first for $\kappa=\frac{1}{q}$ and then for
    $\kappa=\frac{1}{q'}$. We   use $\|u\|_{q,\infty} \les
    \|\bra{D}^{s_1}u\|_1$ and
     \begin{equation} \label{eq:interpol0}
      \|u\|_{q,2}^2
      \les \|\bra{D}^{\frac{d}{2}-\frac{d}{q}}u\|_2^2 
      = \int_{\R^d} \bra{D}^{\frac{d}{q'}}u\cdot \bra{D}^{-\frac{d}{q}}u\,dx
      \leq \| \bra{D}^{s_1} u\|_1  \|\bra{D}^{s_2} u\|_\infty.
    \end{equation}
      In \eqref{eq:interpol0} we subsequently used Propostiion~\ref{prop:BesselPotentials}~(ii), the
      $L^2$-isometry property of the Fourier transform as well as $s_1=\frac{d}{q'},s_2=-\frac{d}{q}$. 
    Real interpolation of these two estimates and  $L^q(\R^d)= (L^{q,\infty}(\R^d),
    L^{q,2})_{2/q,q}$, which is \eqref{eq:InterpolationIdentity} for $\kappa=\frac{2}{q}$,  gives
    \begin{equation} \label{eq:interpol1}
      \|u\|_q 
      \les \|u\|_{q,\infty}^{1-\frac{2}{q}}  \|u\|_{q,2}^{\frac{2}{q}}
      \les \|\bra{D}^{s_1}u\|_1^{\frac{1}{q'}}  \|\bra{D}^{s_2}u\|_{\infty}^{\frac{1}{q}}.
    \end{equation} 
    So the claim holds for $\kappa=\frac{1}{q}$ and we now consider $\kappa=\frac{1}{q'}$. Here we use Stein's
    Interpolation Theorem \cite{Stein_Interpolation} in a more general setting 
    \cite[Theorem~2.1]{Voigt_Stein} for the family of linear operators $\mathcal T^s u := e^{s^2} \bra{D}^{s/2
    - d/q}u$ with $s\in\C,0\leq \Real(s)\leq 1$.
    We have
    \begin{align*}
      \|\mathcal T^{it}u\|_{\BMO(\R^d)} 
      &= e^{-t^2} \|\bra{D}^{it}(\bra{D}^{-\frac{d}{q}}u)\|_{\BMO(\R^d)} 
       \les \|\bra{D}^{-\frac{d}{q}}u\|_{\infty}, \\
      \|\mathcal T^{1+it}u\|_{2} 
      &= e^{1-t^2} \|\bra{D}^{\frac{d}{2}-\frac{d}{q}}u\|_2 
       \stackrel{\eqref{eq:interpol0}}\les \| \bra{D}^{\frac{d}{q'}}u\|_1^{\frac{1}{2}} 
       \|\bra{D}^{-\frac{d}{q}}u\|_\infty^{\frac{1}{2}}.
    \end{align*}
    Here we used the validity of Mikhlin's Multiplier Theorem in $\BMO(\R^d)$ to deduce that
    the operator norm $\bra{D}^{it}:L^\infty(\R^d)\to \BMO(\R^d)$ is polynomially bounded 
    with respect to  $t$ and thus
    compensated by the mitigating factor $e^{-t^2}$ as $|t|\to\infty$. We refer to  Proposition~3.4,
    Theorem~4.4 and the comments on page 20-21 in Tao's Lecture notes~\cite{Tao} where such an application in the context of
    Stein's interpolation theorem is explicitly mentioned. In view of $[\BMO(\R^d),L^2(\R^d)]_\theta =
    L^{2/\theta}(\R^d)$ for $0<\theta\leq 1$ we may plug in $\theta=\frac{2}{q}$ and get 
    in view of  $s_1=\frac{d}{q'},s_2=-\frac{d}{q}$ 
    $$
      \|u\|_q
      = \|\mathcal T^{\frac{2}{q}}u\|_{q} 
      \les \|\bra{D}^{-\frac{d}{q}}u\|_{\infty}^{1-\theta}
       \left( \| \bra{D}^{\frac{d}{q'}}u\|_1^{\frac{1}{2}} 
       \|\bra{D}^{-\frac{d}{q}}u\|_\infty^{\frac{1}{2}}\right)^\theta
      =   \|\bra{D}^{s_1}u\|_1^{\frac{1}{q}}  \|\bra{D}^{s_2}u\|_{\infty}^{\frac{1}{q'}}.
    $$
    \item Case $1<r_1<r_2=\infty$:\; We have to prove \eqref{eq:GNLargeFreq} for $1<q<r_1,\kappa\geq
    \frac{r_1}{q}$. We consider $\mathcal T^s u := e^{s^2}  \bra{D}^{s_2+s(s_1-s_2)}u$ and obtain as before
    \begin{align*}
      \|\mathcal T^{it}u\|_{\BMO(\R^d)} 
      \les \|\bra{D}^{s_2} u\|_{\infty},\qquad
      \|\mathcal T^{1+it}u\|_{r_1} \les \|\bra{D}^{s_1}u\|_{r_1}.
    \end{align*}
    So we conclude for $\kappa:= \frac{r_1}{q} = \frac{s_2}{s_2-s_1}$
    $$
      \|u\|_q
      = \|\mathcal T^{\kappa} u\|_{\frac{r_1}{\kappa}}
      \les \|\bra{D}^{s_2} u\|_{\infty}^{1-\kappa} \|\bra{D}^{s_1}u\|_{r_1}^\kappa.
    $$  
    This proves the claim for $\kappa=\frac{r_1}{q}$. Since the desired bound for $\kappa=1$
    follows from Proposition~\ref{prop:BesselPotentials}~(ii), we
    get the claim for $\kappa\in [\frac{r_1}{q},1]$.  
    \item Case $1<r_1=r_2=\infty$: This case does not occur because
    $\frac{1-\kappa}{r_1}+\frac{\kappa}{r_2}-\frac{1}{q}=-\frac{1}{q}<0$.
  \end{itemize}

  \medskip
  
   \textbf{Step 4:}\;  
  To prove the remaining estimates we first prove restricted weak-type estimates $\| u_2\|_{q,\infty} \les
  \|u\|_{(X_1,X_2)_{\kappa,1}}$ for all exponents satisfying
  \begin{align}\label{eq:LargeFreqIII}
    0\leq \frac{1-\kappa}{r_1}+\frac{\kappa}{r_2}-\frac{1}{q} = \frac{\ov s}{d}
    \quad\text{and}\quad 1<q<\infty 
    \quad\text{and}\quad \frac{1}{r_1}-\frac{1}{r_2}\neq \frac{s_1-s_2}{d}.
  \end{align}
  For $s_1=s_2=0$ this is implied by H\"older's Inequality, so we
  may assume $\ov s>0$ or $\ov s=0, (s_1,s_2)\neq (0,0)$. For $\ov s=0,(s_1,s_2)\neq (0,0),q=r_1=r_2$ this is
  implied by the strong estimates in the case~\eqref{eq:LargeFreqII}, so we may even  assume
  $\ov s>0$ or $\ov s=0, (s_1,s_2)\neq (0,0),(r_1,r_2)\neq (q,q)$.
  For the remaining exponents the weak estimate is a consequence of~\eqref{eq:SummationII} because
  one can find $q_i\in [r_i,\infty]$ such that
  \begin{align*}
    &(1-\kappa)\left( s_1-d\left(\frac{1}{r_1}-\frac{1}{q_1}\right)\right)
    + \kappa \left( s_2-d\left(\frac{1}{r_2}-\frac{1}{q_2}\right)\right)
    = 0,\\
    &\frac{1}{q}=\frac{1-\kappa}{q_1}+\frac{\kappa}{q_2},\qquad
     s_i-d\left(\frac{1}{r_i}-\frac{1}{q_i}\right)\neq 0,\quad q_1\neq q_2.
  \end{align*}
  Indeed, this condition is equivalent to $\frac{1-\kappa}{r_1}+\frac{\kappa}{r_2}-\frac{1}{q} = \frac{\ov
  s}{d}$ and finding $q_2$ such that
  \begin{align*}
     \frac{1}{q}-\frac{1-\kappa}{r_1}\leq \frac{\kappa}{q_2}\leq \frac{\kappa}{r_2},\quad
     q_2\neq q,\quad
     \frac{1}{q}-(1-\kappa)\left(\frac{1}{r_1}-\frac{s_1}{d}\right) \neq \frac{\kappa}{q_2}
     \neq \kappa \left(\frac{1}{r_2}-\frac{s_2}{d}\right),
  \end{align*}
  and such a choice is possible due to our assumptions. (In the case $\ov s=0, (s_1,s_2)\neq
  (0,0),(r_1,r_2)\neq (q,q)$ choose $q_2=r_2,q_1=r_1$.) In this way we obtain $\| u_2\|_{q,\infty} \les
  \|u\|_{(X_1,X_2)_{\kappa,1}}$ for all exponents satisfying~\eqref{eq:LargeFreqIII}.  We finally 
  interpolate these  restricted weak-type estimates  with each other to prove strong estimates for exponents
  as in~\eqref{eq:LargeFreqIII}.
  To this end let $\delta>0$ be sufficiently small (but fixed) and $\eps:=
  \delta(\frac{s_1-s_2}{d}-\frac{1}{r_1}+\frac{1}{r_2})\neq 0$ and define $\tilde q,q^*,\tilde\kappa,\kappa^*$ via  $\frac{1}{\tilde
    q}-\eps=\frac{1}{q}=\frac{1}{q^*}+\eps$ and $\tilde \kappa-\delta=\kappa=\kappa^*+\delta$.
    Then $(\tilde q,r_1,r_2,\tilde\kappa),(q^*,r_1,r_2,\kappa^*)$ satisfies
    \eqref{eq:LargeFreqIII} and the reiteration property of real interpolation
    \cite[Theorem~3.5.3]{BerghLoefstrom1976} gives
    \begin{align*}
      \|u_1\|_q
      &\les \|u_1\|_{(L^{q^*}(\R^d),L^{\tilde q}(\R^d))_{\frac{1}{2},q}} \\
      &\les \|u\|_{((X_1,X_2)_{\kappa^*,1},(X_1,X_2)_{\tilde\kappa,1})_{\frac{1}{2},q}} \\
      &\les \|u\|_{(X_1,X_2)_{\kappa,q}} \\
      &\stackrel{\eqref{eq:interpolationfunctor}}\les \|P_1(D)u\|_{r_1}^{1-\kappa} \|P_2(D)u\|_{r_2}^{\kappa}
    \end{align*}
   Here the first bound uses $\frac{1}{q}=\frac{1}{2}(\frac{1}{q^*}+\frac{1}{\tilde q})$ and the third uses
   $\kappa=\frac{1}{2}(\tilde\kappa+\kappa^*)$. This finishes the proof.
\end{proof}

We have thus proved that the  Gagliardo-Nirenberg inequality~\eqref{eq:GNgeneral} holds for non-critical
frequencies whenever the exponents belong to the set 
\begin{align*}
    \mathcal B(\kappa) := \left\{  (q,r_1,r_2)\in [1,\infty]^3:\;   (q,r_1,r_2) \text{ as in
    Proposition~\ref{prop:LargeFreqInterpolated}}    
    \right\}.
\end{align*}

\begin{rem} \label{rem:GNandFracLap} ~
  \begin{itemize}
  \item[(a)]
    The original Gagliardo-Nirenberg inequality~
  $
    \|\nabla^j v\|_q \les \|\nabla^m v\|_{r_1}^{1-\kappa} \|v\|_{r_2}^\kappa
  $
  from~\cite[p.125]{Nirenberg_OnElliptic} holds for $j,m\in\N$ provided that $\frac{1}{q}- \frac{j}{d} =
  (1-\kappa)(\frac{1}{r_1}-\frac{m}{d}) + \frac{\kappa}{r_2}$ and $\frac{j}{m}\leq 1-\kappa<1$. Our
  result shows that ``in most cases'' the large frequency part of this estimate holds provided
  that $\frac{j}{m}\leq 1-\kappa<1$ holds and $\frac{1}{q}- \frac{j}{d} \geq
  (1-\kappa)(\frac{1}{r_1}-\frac{m}{d}) + \frac{\kappa}{r_2}$. The exceptions are due to  the fact that,
  in $L^1(\R^d)$ or $L^\infty(\R^d)$, the term $\bra{D}^j u$ does not control $D^j u$, i.e., not every single
  partial derivative of order $j$.  This is a consequence of the unboundedness
  of the Riesz transform on these spaces.
    \item[(b)]
    Our proof indicates which function spaces to choose in order to get some endpoint estimates in the
    exceptional cases as well. Roughly speaking, one may replace $L^q(\R^d)$ by $L^{q,r}(\R^d)$ for
    suitable $r>q$ and $L^\infty(\R^d)$ by $\BMO(\R^d)$ on the left hand side. On the right hand side the
    Hardy space $\mathcal H^1(\R^d)$ may replace $L^1(\R^d)$.
    \item[(c)] One may as well consider symbols $P_i(D)$ that   vanish at some
    finite set of points in $\R^d\sm S$. If for instance one has 
     $P_i(\xi)=b_i(\xi)|\xi-\xi^*|^{t_i}$ near $\xi^*\in\R^d\sm S$ for $t_1,t_2>-d$ and
     non-vanishing $b_i\in C^\infty(\R^d)$, then one finds as in
     Proposition~\ref{prop:LargeFreqInterpolated} that the interpolation estimate holds in this frequency
     regime whenever $\frac{1-\kappa}{r_1}+\frac{\kappa}{r_2}-\frac{1}{q}>\frac{\ov t}{d}$ 
     where $\ov t:=(1-\kappa)t_1+\kappa t_2$. Under suitable extra conditions similar to the ones above, this
     may be extended to the endpoint case $\frac{1-\kappa}{r_1}+\frac{\kappa}{r_2}-\frac{1}{q}=\frac{\ov
     t}{d}$.
    \item[(d)] The proof in the important special case $1<r_1,r_2,q<\infty$ is much shorter than the
    complete analysis, see the beginning of Step 2. 
  \end{itemize} 
\end{rem}

\section{Critical frequency analysis}
  
  We introduce a real number $A_\eps(p,q)$ such that $\|\tilde T_j\|_{p\to q}\les 2^{-jA_\eps(p,q)}$ 
  holds for suitably defined dyadic operators $\tilde T_j$ that play the role of the $T_j$ in the previous
  section.
  Unfortunately, the definition of $A_\eps(p,q)$ is rather complicated for $d\geq 2$. It involves the number 
  $$
    A(p,q):=\min\{A_0,A_1,A_2,A_2',A_3,A_3',A_4,A_4'\}
  $$ 
  where $A_i=A_i(p,q)$ and  $A_i'=A_i(q',p')$ are given by
  $$
     A_0=1, \qquad
    A_1 = \frac{k+2}{2}\left(\frac{1}{p}-\frac{1}{q}\right), \qquad
    A_2 = \frac{k+2}{2} - \frac{k+1}{q} 
  $$
  as well as  
  $$ 
    A_3  =\frac{2d-k}{2}-\frac{2d-k-1}{q},  \qquad
    A_4 =  \frac{k+2}{2}\left(\frac{1}{p}-\frac{1}{q}\right)+\frac{2d-k-2}{2}-\frac{2d-k-2}{q}.
  $$
  The values $A_0,A_1,A_1',A_2,A_2'$ will be important for  $1\leq p\leq 2\leq q\leq \infty$ whereas
  all other exponents satisfying $1\leq p\leq q\leq \infty$ come with $A_3,A_3',A_4,A_4'$. 
  Then we define  for $\eps>0$
  \begin{align}\label{eq:def_Aeps}
     A_\eps(p,q) &:=  
       \frac{1}{p}-\frac{1}{q} \quad\text{if }d=1, \quad\qquad
     A_\eps(p,q):= 
       A(p,q) - \eps \cdot \ind_{(p,q)\in\mathcal E} \quad\text{if }d\geq 2.
  \end{align}
  Here, $\mathcal E$ denotes a set of exceptional points where we do not have strong bounds, but only weak
  bounds or restricted weak-type bounds. It is given by
  \begin{align*}
     \mathcal E &:= \Big\{(p,q)\in [1,\infty]^2 :\;
    \frac{1}{p}=\frac{k+2}{2(k+1)},\; \frac{1}{q}\leq 
    \frac{k^2}{2(k+1)(k+2)} \quad\text{or } \\
    &\hspace{4cm} \frac{1}{q}=\frac{k}{2(k+1)},\; \frac{1}{p}\geq
    \frac{k^2+6k+4}{2(k+1)(k+2)}\Big\}
  \end{align*}
  and coincides with the red points in Figure~\ref{fig:Tdeltabounds}.  
   
  \medskip 
  
\begin{figure}[htbp] 
\begin{tikzpicture}[scale=10]
\draw[->] (0,0) -- (1.05,0) node[right]{$\frac{1}{p}$};
\draw[->] (0,0) -- (0,1.05) node[above]{$\frac{1}{q}$};
\draw (0,0) --(1,1);
\draw (1,1) -- (1,0) node[below]{$1$};
\draw (0,1) node[left]{$1$};
\coordinate (O) at (0,0);
\coordinate (O') at (1,1);
\coordinate (ST) at (0.5,0.25); 
\coordinate (ST') at (0.75,0.5);
\coordinate (R) at (0.67,0.17); 
\coordinate (R') at (0.83,0.33);
\coordinate (H) at (0.5,0); 
\coordinate (H') at (1,0.5);
\coordinate (C) at (0.5,0.5);
\coordinate (E) at (1,0);
\coordinate (ER) at (0.67,0);
\coordinate (ER') at (1,0.33);
\draw (0.85,0.15) node [rounded corners=8pt]
{$A_0$};
\draw (0.65,0.4) node [rounded corners=8pt]
{$A_1$};
\draw (0.9,0.4) node [rounded corners=8pt]
{$A_2$};
\draw (0.6,0.1) node [rounded corners=8pt]
{$A_2'$};
\draw (0.95,0.75) node [rounded corners=8pt]
{$A_3$};
\draw (0.25,0.05) node [rounded corners=8pt]
{$A_3'$};
\draw (0.7,0.6) node [rounded corners=8pt]
{$A_4$};
\draw (0.4,0.3) node [rounded corners=8pt]
{$A_4'$};
\draw [dotted] (0,0.5) node[left]{$\frac{1}{2}$} -- (0.5,0.5);
\draw [dotted] (0,0.25) node[left]{$\frac{k}{2(k+2)}$} -- (ST);
\draw [dotted] (0,0.17) node[left]{$\frac{k^2}{2(k+1)(k+2)}$} -- (R);
\draw [dotted] (0.75,0) node[below]{\hspace{3mm}$\frac{k+4}{2(k+2)}$} -- (ST');
\draw [dotted] (0.5,0) node[below]{$\frac{1}{2}$} -- (0.5,0.5);
\draw  (C) -- (H);
\draw  (C) -- (H');
\draw  (C) -- (O);
\draw   (C) -- (O');
\draw  (ST) -- (O);
\draw  (ST') -- (O');
\draw  (ST) -- (R);
\draw  (ST') -- (R');
\draw   (R) -- (R');
\draw  (R) -- (ER);
\draw     (R') -- (ER');
\draw [line width = 0.4mm, draw=red] (R) -- (0.67,0) node[below]{\hspace{-3mm}$\frac{k+2}{2(k+1)}$};
\draw [line width = 0.4mm, draw=red] (R') -- (1,0.33);
\end{tikzpicture}
 \caption{Riesz diagram  with the bounds for the mapping constant of $\tilde T_j$ from
 Lemma~\ref{lem:Tdelta2}. The exceptional points from $\mathcal E$ are coloured in red.}
 \label{fig:Tdeltabounds}
\end{figure}
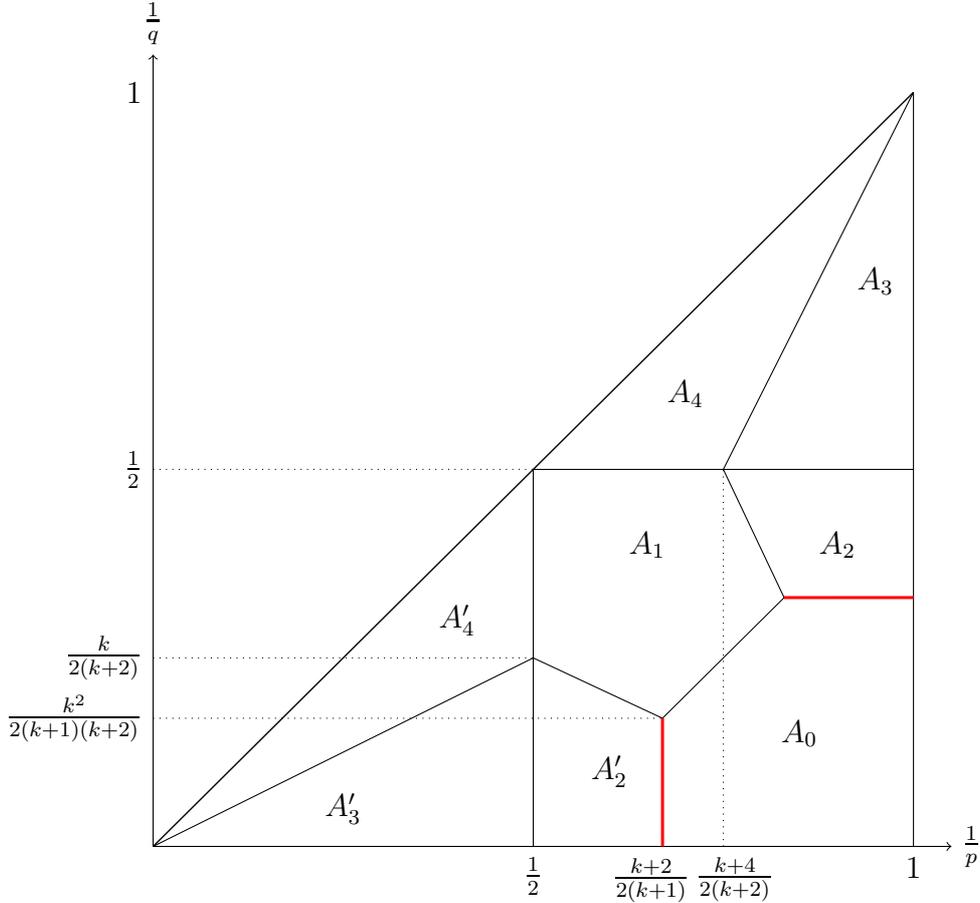
 
  \medskip
  
  We first prove dyadic estimates in the  frequency regime close to the
  critical surface $S$. The latter can be locally parametrized as a graph $\xi_d=\psi(\xi')$ after
  some permutation of coordinates, where $\xi=(\xi',\xi_d)\in\R^{d-1}\times\R\simeq  \R^d$. In view of (A1)
  we  study operators of the form
  \begin{align}\label{eq:def_TildeTj}
    \begin{aligned}
    \tilde T_j f 
    &:= \mathcal F^{-1}\left( \eta\left(2^j(\xi_d-\psi(\xi'))\right) \chi(\xi')\hat f(\xi) \right)
    = \tilde K_j\ast f \\
    \text{where}\quad
    \tilde K_j
    &:= \mathcal F^{-1}\left( \eta \left(2^j(\xi_d-\psi(\xi'))\right) \chi(\xi')\right) 
    \end{aligned}
  \end{align}
   and
   \begin{align} \label{eq:chipsi}
     \begin{aligned}
     &\psi\in C^\infty(\R^{d-1}),\;\chi\in C_0^\infty(\R^{d-1}) \text{ and at least }k\in\{1,\ldots,d-1\} 
     \\
     &\text{eigenvalues of the Hessian } D^2\psi \text{ are non-zero on }\supp(\chi).
     \end{aligned}
   \end{align}
   In the degenerate case $d=1$ we interpret  $\eta (2^j(\xi_d-\psi(\xi'))) \chi(\xi')$ as 
   $\eta(2^j(\xi-c))$ for some constant $c\in\R$.
   Our analysis of the mapping properties of $\tilde T_j$ follows~\cite[Section~4]{ManSch}. 
   Contrary to the situation for $T_j$, only the bounds for $j\nearrow +\infty$ will be of importance.
   Repeating the proof of Lemma~\ref{lem:Tdelta} gives the following result in the one-dimensional case.
  
  \begin{lem}\label{lem:Tdelta2d=1}
      Assume $d=1$ and $\eta\in C_0^\infty(\R)$. Then we have  
  $$
    \|\tilde T_j\|_{p\to q}
     \les 2^{-j(\frac{1}{p}-\frac{1}{q})}  
     \qquad \text{for }1\leq  p\leq q\leq \infty,\;j\in\Z. 
    $$
  \end{lem}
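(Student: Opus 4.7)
My plan is to argue exactly as in Lemma~\ref{lem:Tdelta}, since in one dimension the multiplier $\eta(2^j(\xi-\psi(\xi')))\chi(\xi')$ reduces, under the stated convention, to $\eta(2^j(\xi-c))$ for some real constant $c$. This is just a translated and dilated version of the multiplier that appeared in Lemma~\ref{lem:Tdelta}, so the kernel $\tilde K_j$ will differ only by a modulation $e^{ixc}$ and the same dilation, both of which preserve $L^r$-norms.

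Concretely, I would first compute $\tilde K_j$ explicitly by substituting $u = 2^j(\xi-c)$ in the inverse Fourier integral:
\begin{equation*}
  \tilde K_j(x)
  = 2^{-j}\, e^{ixc}\, \mathcal F^{-1}(\eta)(2^{-j}x).
\end{equation*}
Since $\eta\in C_0^\infty(\R)$, its inverse Fourier transform lies in $\mathcal S(\R)$ and is therefore in $L^r(\R)$ for every $r\in[1,\infty]$. A change of variables, together with $|e^{ixc}|=1$, yields
\begin{equation*}
  \|\tilde K_j\|_r
  = 2^{-j}\, \|\mathcal F^{-1}(\eta)(2^{-j}\cdot)\|_r
  \les 2^{-j/r'}.
\end{equation*}

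Finally, I would apply Young's convolution inequality with $\frac{1}{r}=1+\frac{1}{q}-\frac{1}{p}$, which is admissible exactly when $1\leq p\leq q\leq \infty$, to conclude
\begin{equation*}
  \|\tilde T_j f\|_q
  = \|\tilde K_j\ast f\|_q
  \les \|\tilde K_j\|_r\,\|f\|_p
  \les 2^{-j/r'}\,\|f\|_p
  = 2^{-j(\frac{1}{p}-\frac{1}{q})}\,\|f\|_p.
\end{equation*}
There is no serious obstacle here: the whole argument is a restatement of the proof of Lemma~\ref{lem:Tdelta} with the extra observation that the modulation factor $e^{ixc}$ is harmless. The genuine difficulties of the critical-frequency analysis (involving curvature and the numbers $A_i$) arise only in dimension $d\geq 2$, which is treated in the subsequent Lemma.
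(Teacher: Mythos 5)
Your proof is correct and is exactly the argument the paper intends: the text introduces this lemma with ``Repeating the proof of Lemma~\ref{lem:Tdelta} gives the following result in the one-dimensional case,'' and your explicit computation of $\tilde K_j(x)=2^{-j}e^{ixc}\mathcal F^{-1}(\eta)(2^{-j}x)$ followed by Young's inequality is precisely that repetition, with the harmless modulation factor correctly accounted for.
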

  
  The bounds in  higher dimensions are   more complicated and depend on the number
  $k\in\{1,\ldots,d-1\}$ of non-vanishing principal curvatures of $S$. We first analyze the kernel function
  $\tilde K_j$ following~\cite[Lemma~4.4]{ManSch}.
   
  \begin{prop}\label{prop:kernel}
     Assume $d\in\N,d\geq 2$, let $\chi,\psi,k$ be as in~\eqref{eq:chipsi} and $\eta\in C_0^\infty(\R)$.
     Then the kernel function $\tilde K_j$ satisfies for $j\in\Z, j\geq j_0$
     \begin{equation}\label{eq:normboundsTildeKj}
       \|\tilde K_j\|_r \les 2^{-j(\frac{2d-k}{2}-\frac{2d-k-1}{r})} \;\text{if }1\leq r\leq 2,\qquad
       \|\tilde K_j\|_\infty \les 2^{-j}.
     \end{equation}
  \end{prop}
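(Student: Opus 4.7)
The strategy is to first integrate out the $\xi_d$-variable in the definition of $\tilde K_j$ in order to replace it by a one-dimensional Schwartz factor times an oscillatory integral over the parameter domain of $\psi$. Performing the substitution $t=2^{j}(\xi_d-\psi(\xi'))$ in the $\xi_d$-integral produces the factorisation
\begin{equation*}
  \tilde K_j(x) \;=\; 2^{-j}\,\check\eta(2^{-j}x_d)\,I(x),\qquad
  I(x)\;:=\;\int_{\R^{d-1}} e^{i(x'\cdot\xi'+x_d\psi(\xi'))}\,\chi(\xi')\,d\xi',
\end{equation*}
with $\check\eta:=\mathcal F^{-1}\eta\in\mathcal S(\R)$. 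The integral $I(x)$ is, up to a constant, the Fourier transform of the smooth density $\chi$ on the piece of the graph of $\psi$ inside $\supp(\chi)$, and controlling it is the core of the argument.

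From this factorisation I plan to read off the claim by proving the three endpoint estimates $r\in\{1,2,\infty\}$ and interpolating. The bound $\|\tilde K_j\|_\infty\les 2^{-j}$ is immediate from $\|\check\eta\|_\infty\les 1$ together with $|I(x)|\leq\|\chi\|_1$. The bound $\|\tilde K_j\|_2\les 2^{-j/2}$ is Plancherel applied to the original representation, since the multiplier $\eta(2^j(\xi_d-\psi(\xi')))\chi(\xi')$ is supported in a tube of thickness $\asymp 2^{-j}$ around $S$. The remaining endpoint $r=1$, namely $\|\tilde K_j\|_1\les 2^{j(d-1-k/2)}$, is the heart of the proof and will be obtained by stationary phase applied to $I(x)$. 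The phase $\Phi_x(\xi'):=x'\cdot\xi'+x_d\psi(\xi')$ has gradient $x'+x_d\nabla\psi(\xi')$ and Hessian $x_d D^2\psi(\xi')$, the latter of rank at least $k$ on $\supp(\chi)$ by the curvature hypothesis~\eqref{eq:chipsi}. Choosing $M>0$ so large that $|\nabla\Phi_x(\xi')|\gtrsim |x'|$ on $\supp(\chi)$ whenever $|x'|\geq M|x_d|$, repeated integration by parts gives rapid off-cone decay $|I(x)|\les_N(1+|x|)^{-N}$, while on the complementary cone $|x'|\leq M|x_d|$ the rank-$k$ stationary phase estimate (in the spirit of~\cite[Section~4]{ManSch}) yields $|I(x)|\les(1+|x_d|)^{-k/2}$. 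Performing the $x'$-integration against this bound (gaining a factor $|x_d|^{d-1}$) and substituting $y=2^{-j}x_d$ leads to
\begin{equation*}
  \|\tilde K_j\|_1 \;\les\; 2^{-j}\int_{\R}|\check\eta(2^{-j}x_d)|\,|x_d|^{d-1}(1+|x_d|)^{-k/2}\,dx_d
  \;\les\; 2^{j(d-1-k/2)},
\end{equation*}
where the transformed integral $\int|\check\eta(y)|\,|y|^{d-1-k/2}\,dy$ converges thanks to $k\leq d-1$ and the Schwartz decay of $\check\eta$.

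The bound for arbitrary $r\in(1,2)$ then follows from the log-convexity of $L^p$-norms: with $\theta\in(0,1)$ determined by $\frac{1}{r}=(1-\theta)+\frac{\theta}{2}$, one has $\|\tilde K_j\|_r\leq\|\tilde K_j\|_1^{1-\theta}\|\tilde K_j\|_2^{\theta}$, and substituting $\theta=2(r-1)/r$ a short computation confirms that the resulting exponent equals $-\bigl(\tfrac{2d-k}{2}-\tfrac{2d-k-1}{r}\bigr)$. The main obstacle is unambiguously the $L^1$ step: one has to invoke stationary phase in its rank-$k$ refinement (the cleaner hypersurface bound $(d-1)/2$ is not available under our weaker curvature hypothesis when $k<d-1$), and split $\R^d$ according to whether $|x'|$ lies inside or outside a cone of aperture $\sim |x_d|$, since otherwise the $x'$-integration of $(1+|x|)^{-k/2}$ diverges whenever $k<d-1$.
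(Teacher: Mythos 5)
Your proof is correct and follows essentially the same route as the paper: the same factorisation $\tilde K_j(x)=2^{-j}\check\eta(2^{-j}x_d)I(x)$, the same splitting into the cone $|x'|\leq c|x_d|$ (rank-$k$ stationary phase giving $(1+|x_d|)^{-k/2}$) and its complement (non-stationary phase), and interpolation of the $r=1,2$ endpoints. The only detail to make explicit is that the off-cone region contributes $O(1)$ to $\|\tilde K_j\|_1$ (not covered by your displayed integral over the cone), which is absorbed into $2^{j(d-1-k/2)}$ precisely because $j\geq j_0$ and $d-1-k/2>0$ --- this is where the paper's remark ``we used $2^j\geq 2^{j_0}>0$'' enters.
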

  \begin{proof}
    The bound $\|\tilde K_j\|_2 \les 2^{-j/2}$ follows from Plancherel's identity and~\eqref{eq:def_TildeTj}.
    Indeed,
    \begin{align*}
      \|\tilde K_j\|_2^2 
      &= \int_{\R^d} \eta(2^j(\xi_d-\psi(\xi')))^2 \chi(\xi')^2\,d(\xi',\xi_d) \\
      &= \int_{\R^{d-1}} \chi(\xi')^2 \left(\int_\R \eta(2^j t)^2\,dt\right) \,d\xi' \\
      &= 2^{-j} \|\chi\|_2^2 \|\eta\|_2^2.
    \end{align*}
    To prove \eqref{eq:normboundsTildeKj} it thus suffices to show $\|\tilde K_j\|_1 \les
    2^{-j(\frac{k+2}{2}-d)}$ as well as $\|\tilde K_j\|_\infty \les 2^{-j}$ and to apply the Riesz-Thorin
    interpolation theorem. These two norm bounds for the kernel function are consequences of the pointwise
    bounds for arbitrary $N,M\in\N_0$
     \begin{align*}
       \begin{aligned}
       |\tilde K_j(x)| 
       &\les_{N,M} 2^{-j} (1+2^{-j}|x_d|)^{-M}  (1+|x'|)^{-N}
       &&\text{if }|x'| \geq c|x_d|, \\
       |\tilde K_j(x)| &\les_M 2^{-j} (1+2^{-j}|x_d|)^{-M}  (1+|x_d|)^{-\frac{k}{2}}
       &&\text{if }|x'| \leq c|x_d|,   
       \end{aligned}
     \end{align*}
     where $c>0$ is suitably chosen. Indeed, choosing $M,N$ sufficiently large we get
  \begin{align*}
   \|\tilde K_j\|_1
   &\les_{N,M} \int_\R
   \left( \int_{|x'|\leq c x_d}  2^{-j} (1+2^{-j}|x_d|)^{-M}  (1+|x_d|)^{-\frac{k}{2}} \,dx'\right)\,dx_d \\
   &\qquad
   + \int_\R \left(\int_{|x'|\geq c x_d}  2^{-j} (1+2^{-j}|x_d|)^{-M}  (1+|x'|)^{-N} \,dx'
   \right)\,dx_d \\
   &\les_{M,N} 2^{-j} \int_\R  (1+2^{-j}|x_d|)^{-M} |x_d|^{d-1}(1+|x_d|)^{-\frac{k}{2}}  \,dx_d \\
   &\qquad + 2^{-j}  \int_\R  (1+2^{-j}|x_d|)^{-M} (1+|x_d|)^{d-N}   \,dx_d \\
   &\les_{M,N} 2^{-j} \int_0^{2^j}
      |x_d|^{d-1}(1+|x_d|)^{-\frac{k}{2}} \,dx_d 
     + 2^{(M-1)j} \int_{2^j}^\infty  |x_d|^{d-\frac{k}{2}-1-M}  \,dx_d  \\
   &\les_{M,N}   2^{-j(\frac{k+2}{2}-d)}.
  \end{align*}
  Here we used $2^j\geq 2^{j_0}>0$. So it remains to prove the pointwise bounds by adapting
  the arguments from \cite{ManSch}. We have 
  $$
      \tilde K_j(x)
      = c_d\, 2^{-j} (\mathcal F^{-1}\eta)(2^{-j}x_d)  \int_{\R^{d-1}}
        e^{i(x'\cdot\xi'+x_d\psi(\xi'))} \chi(\xi')\,d\xi'
    $$
    for some dimensional constant $c_d>0$.
    We choose $c>0$ so large  that the smooth phase function $\Phi(\xi')=x'\cdot\xi' +
    x_d\psi(\xi')$ satisfies $|\nabla \Phi(\xi')|\geq c^{-1}|x'|$ for all $\xi'\in\R^{d-1}$ whenever $|x'|\geq
    c|x_d|$.
    In view of $\chi\in C_0^\infty(\R^{d-1})$  the method of non-stationary phase gives
    \begin{align*}
      |\tilde K_j(x)|
      &\les_N 2^{-j} |(\mathcal F^{-1}\eta)(2^{-j}x_d)| (1+|x'|)^{-N} \\
      &\les_{N,M} 2^{-j} (1+2^{-j}|x_d|)^{-M}  (1+|x'|)^{-N}
      \quad\text{for }|x'|\geq c|x_d|.
    \end{align*}
    In the second estimate we used that $\mathcal F^{-1}\eta$ is a Schwartz function. On the other hand, the 
    theory of oscillatory integrals gives (see~\cite[p.361]{Stein1993}) 
    $$
      |\tilde K_j(x)| \les_M 2^{-j} (1+2^{-j}|x_d|)^{-M}  (1+|x_d|)^{-\frac{k}{2}}
      \qquad\text{for } |x'| \leq c|x_d|.
    $$
  \end{proof}  
   
  Next we use Proposition~\ref{prop:kernel} to find upper bounds for the operator norms of $\tilde T_j$ as
  maps from $L^p(\R^d)$ to $L^q(\R^d)$ where $1\leq p\leq q\leq \infty$.
  The latter condition is mandatory since $\tilde T_j$ is a
  translation-invariant operator covered by H\"ormander's result
  from~\cite[Theorem~1.1]{Hoermander_Translation}. 

\begin{lem}\label{lem:Tdelta2}
  Assume $d\in \N,d\geq 2$ and let $\chi,\psi,k$ are as in~\eqref{eq:chipsi} and $\eta\in C_0^\infty(\R)$.
  Then, for any fixed $\eps>0$,  
  $$
    \|\tilde T_j\|_{p\to q} \les 2^{-j A_\eps(p,q)} 
     \qquad \text{for }1\leq p\leq  q\leq \infty,\;j\in\Z,j\geq j_0.
  $$
\end{lem}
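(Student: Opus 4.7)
The $d=1$ case is already covered by Lemma~\ref{lem:Tdelta2d=1}, so I focus on $d\geq 2$. The plan is to establish sharp decay estimates at a few distinguished points of the Riesz diagram $\{(1/p,1/q):1/p\geq 1/q\}$ and then Riesz--Thorin interpolate, using translation invariance and H\"ormander's duality $\|\tilde T_j\|_{p\to q}=\|\tilde T_j\|_{q'\to p'}$ to extend to $1/p< 1/q$. The constants $A_i$ will appear as the decay exponents along these interpolation segments. The vertex inputs are: (i) Plancherel's identity, which yields $\|\tilde T_j\|_{2\to 2}\les 1$ since the associated multiplier is uniformly bounded (this is exponent $0$ at $C$); (ii) Young's inequality applied with the kernel bounds of Proposition~\ref{prop:kernel}, giving $\|\tilde T_j\|_{1\to\infty}\les 2^{-j}$ (i.e.\ $A_0=1$ at $E$) and $\|\tilde T_j\|_{1\to r}\les 2^{-j(\frac{2d-k}{2}-\frac{2d-k-1}{r})}$ for $r\in[1,2]$, covering $A_3$ along the edge $E\to H'$; (iii) the Stein--Tomas adjoint restriction theorem for a piece of hypersurface with $k$ non-vanishing principal curvatures (cf.~\cite{ChoKimLeeShim2005,ManSch}), which at the point $ST=(1/2,k/(2(k+2)))$ yields $\|\tilde T_j\|_{2\to 2(k+2)/k}\les 2^{-j/2}$, i.e.\ $A_1(ST)=\frac{1}{2}$.

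Riesz--Thorin interpolation among these vertex bounds produces the exponents $A_1,A_2,A_3,A_4$ in the sub-diagonal region $1\leq p\leq 2\leq q\leq\infty$: interpolating Plancherel against Stein--Tomas gives slopes of type $A_1$ along segments $1/p-1/q=\mathrm{const}$; interpolating Stein--Tomas against the kernel-Young bounds at $E$ and $H'$ produces $A_2$ and $A_4$; and $A_3$ is the Young bound itself along $E\to H'$. For each point $(1/p,1/q)$ in this region the sharp $A(p,q)$ is obtained as the minimum over those $A_i$ whose interpolation segment passes through that point, since different segments are binding in different sub-regions---this is the geometric content of the formula $A=\min\{A_0,\ldots,A_4'\}$. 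Dualizing via H\"ormander's result for translation-invariant operators yields the primed companions $A_1',\ldots,A_4'$ for the upper region, completing the diagram.

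The delicate point is the exceptional set $\mathcal E$ (the red segments in Figure~\ref{fig:Tdeltabounds}): these lie on the critical Stein--Tomas line and its dual, where two different interpolation lines meet at a corner of the Stein--Tomas range and only \emph{restricted} weak-type bounds are available. To upgrade a pair of restricted weak-type estimates at nearby exponents to a strong-type estimate via real interpolation costs an arbitrarily small factor in the decay rate, which accounts for the loss $-\eps\cdot\ind_{(p,q)\in\mathcal E}$ in~\eqref{eq:def_Aeps}. Each individual step---Young, Plancherel, Stein--Tomas, Riesz--Thorin, real interpolation---is routine once Proposition~\ref{prop:kernel} is in place; the main obstacle is the case-by-case verification that in every sub-region of the Riesz diagram the minimum in $A(p,q)$ is correctly realized by one of the listed $A_i$, and that the endpoint handling on $\mathcal E$ produces exactly the claimed $\eps$-loss rather than a larger one.
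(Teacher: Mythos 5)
Your scaffolding (Plancherel at the center, kernel/Young bounds near the diagonal, Stein--Tomas at $ST$ and its dual, then Riesz--Thorin plus duality) reproduces part of the paper's argument, but there is a genuine gap: these inputs cannot produce the $A_0$ region. The only point where you have decay exponent $1$ is the single vertex $(1/p,1/q)=(1,0)$, coming from $\|\tilde K_j\|_\infty\les 2^{-j}$; every nontrivial Riesz--Thorin combination with Plancherel (exponent $0$), Stein--Tomas (exponent $\tfrac12$) or the kernel bounds strictly decreases the exponent below $1$. Yet $A(p,q)=A_0=1$ is claimed on the whole open pentagon \eqref{eq:RestExt} (the region around $(0.85,0.15)$ in Figure~\ref{fig:Tdeltabounds}), and a short linear-programming check shows that even the $A_2$ values in the adjacent region are unreachable from your vertex set. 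The paper's proof supplies the missing ingredient: the $L^p\to L^q$ bounds for the restriction--extension operator $f\mapsto\mathcal F^{-1}(\hat f\,d\sigma_M)$ on the pentagon \eqref{eq:RestExt} from \cite[Corollary~5.1]{ManSch} --- a genuinely stronger, non-$L^2$-based input than Stein--Tomas --- combined with a foliation of $\supp(\eta(2^j(\xi_d-\psi(\xi'))))$ into the level sets $M_t=\{\xi_d-\psi(\xi')=t\}$ and Minkowski's inequality, which converts the surface-measure bounds into $\|\tilde T_jf\|_q\les\int_\R|\eta(2^jt)|\,dt\,\|f\|_p\les 2^{-j}\|f\|_p$ throughout the pentagon. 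Without this step the lemma is not proved on a full two-dimensional region of the Riesz diagram.

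A consequence of the same omission is that your explanation of the exceptional set $\mathcal E$ is misattributed. The red segments are not corners of the Stein--Tomas range (Stein--Tomas gives strong $L^2$-based bounds at $ST$ and $ST'$); they are exactly the portions of the boundary of the pentagon \eqref{eq:RestExt} on the critical lines $\frac1p=\frac{k+2}{2(k+1)}$ and $\frac1q=\frac{k}{2(k+1)}$, where \cite[Corollary~5.1]{ManSch} only provides restricted weak-type bounds $L^{p,1}\to L^{q,\infty}$ with constant $2^{-j}$; interpolating these with neighbouring strong bounds produces the $\eps$-loss. Two smaller inaccuracies: the Young bounds $\|\tilde T_j\|_{1\to r}\les\|\tilde K_j\|_r$ for $r\in[1,2]$ live on the edge from $H'=(1,\tfrac12)$ to $(1,1)$, not on $E\to H'$; and the regions $A_3',A_4'$ (resp.\ $A_3,A_4$) are obtained in the paper by interpolating the diagonal endpoint bound $\|\tilde T_j\|_{\infty\to\infty}\les\|\tilde K_j\|_1\les 2^{-j(\frac{k+2}{2}-d)}$ from Proposition~\ref{prop:kernel} (note this exponent is negative) against the $p=2$ column, which is a workable alternative to your edge-based scheme but should be stated explicitly since the $L^1$ kernel norm grows in $j$.
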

\begin{proof}
  We first analyze the range $1\leq p\leq 2\leq q\leq \infty$.
  Plancherel's Theorem gives
  $$
    \|\tilde T_j f\|_2
    = \|\eta\left(2^j(\xi_d-\psi(\xi'))\right)\chi(\xi')\hat f\|_2
    \les \|\hat  f\|_2 = \|f\|_2
  $$
  due to $\eta,\chi \in L^\infty(\R^d)$. The Stein-Tomas Theorem for surfaces with $k$
  non-vanishing principal curvatures~\cite[p.365]{Stein1993} yields as in \cite[Lemma~4.3]{ManSch} 
  \begin{equation*}
    \|\tilde T_j f\|_q
    \les 2^{-\frac{j}{2}}\| f\|_2,\quad
    \|\tilde T_j f\|_2
    \les 2^{-\frac{j}{2}}\| f\|_{q'}
    \qquad\text{if } \frac{1}{q}\leq  \frac{k}{2(k+2)}.
  \end{equation*}
  The Restriction-Extension operator $f\mapsto \mathcal F^{-1}(\hat f\,d\sigma_M)$ 
  for compact pieces $M$ of hypersurfaces with $k$ non-vanishing principal curvatures
  has the mapping properties from \cite[Corollary~5.1]{ManSch}, so it is bounded for
  $(p,q)$ belonging to the pentagonal region 
  \begin{equation} \label{eq:RestExt}
    \frac{1}{p}>\frac{k+2}{2(k+1)},\quad \frac{1}{q}<\frac{k}{2(k+1)},\quad
    \frac{1}{p}-\frac{1}{q} \geq \frac{2}{k+2}.
  \end{equation}
  So for these exponents and $M_t:= \{\xi=(\xi',\xi_d)\in \supp(\chi)\times\R: \xi_d-\psi(\xi')=t\}$ with
  induced surface measure $d\sigma_{M_t}= (1+|\nabla\psi(\xi')|^2)^{1/2}\,d\xi'$ we  have for $\hat g(\xi):=\chi(\xi')\hat f(\xi)
  (1+|\nabla\psi(\xi')|^2)^{-1/2}$  
  $$
    \|\tilde T_j f\|_q
    \les \int_\R |\eta(2^jt)| \| \mathcal F^{-1}(\hat g\,d\sigma_{M_t}) \|_q\,dt
    \les \int_\R |\eta(2^jt)| \|g\|_p \,dt
    \les 2^{-j} \|f\|_p.
  $$
  Moreover, \cite[Corollary~5.1]{ManSch} yields restricted weak-type bounds from $L^{p,1}(\R^d)$ to
  $L^{q,\infty}(\R^d)$ for all $(p,q)$ belonging to the closure of the above-mentioned pentagon, which implies   
  $\|\tilde T_j f\|_{q,\infty} \les 2^{-j} \|f\|_{p,1}$ in the same manner.   
  Interpolating all these bounds gives 
  $$
    \|\tilde T_j\|_{p\to q}\les 2^{-j(\min\{A_0,A_1,A_2,A_2'\}-\eps\cdot\ind_{(p,q)\in\mathcal E})}
    =  2^{-jA_\eps(p,q)}
    \quad\text{for }1\leq p\leq 2\leq q\leq \infty,\,\eps>0.
  $$
  This finishes the analysis in the case $1\leq p\leq 2\leq q\leq \infty$. For  
  $2\leq p\leq q\leq \infty$ or $1\leq p\leq q\leq 2$ we get from Proposition~\ref{prop:kernel} 
  $$
    \|\tilde T_j\|_{1\to 1}+\|\tilde T_j\|_{\infty\to\infty}
    \les \|\tilde K_j\|_1 \les 2^{-j(\frac{k+2}{2}-d)}.
  $$
  Interpolating the estimates for $(p,q)=(\infty,\infty)$ with the ones for $p=2,q\geq 2$ from above yields
  the estimates in the region $A_3',A_4'$; the dual ones follow analogously. So we get 
  $$
    \|\tilde T_j\|_{p\to q}\les 2^{-j\min\{A_3,A_3',A_4,A_4'\}}=2^{-jA_\eps(p,q)},
  $$ 
  which proves the claim.
\end{proof}
  
  The optimality of our constants is open. It would be interesting to see
  whether recent results and techniques for oscillatory integral operators by Guth, Hickman,
  Iliopolou~\cite{GuthHickmanIliopoulou2019} or Kwon, Lee~\cite{KwonLee2020} (Proposition~2.4,
  Proposition~2.5) can be adapted to prove better bounds, especially  
  in the range $1\leq p\leq q<2$ or $2<p\leq q\leq \infty$. Any theorem leading to a larger value of
  $A_\eps(p,q)$ will automatically provide a larger range of exponents $q,r_1,r_2$ for which our
  Gagliardo-Nirenberg inequalities hold. Candidates for such values $\geq A_\eps(p,q)$
  are given in~\cite[Lemma~2.2]{ChoKimLeeShim2005} and \cite[Lemma~4.4]{ManSch}, but it seems nontrivial to
  make use of those in our setting. Next we use the estimates for $\tilde T_j$ to discuss the relevant
  operators at distance $2^{-j}$ from the critical surface where $j\nearrow +\infty$.
   
\begin{prop} \label{prop:DyadicEstimatesII}
  Assume $d\in\N$ and (A1) with $\alpha_1,\alpha_2>-1$. Then there are bounded linear operators $\mathcal
  T_j:L^p(\R^d)\to L^q(\R^d)$ and $j_0\in\Z$ with 
  $\sum_{j=j_0}^\infty \mathcal T_j u = u_1$ such that, for $i=1,2$ and any given $\eps>0$, we have for all  $u\in\mathcal S(\R^d)$, 
  $$
       \|\mathcal T_ju\|_{q} \les 2^{j (\alpha_i-A_\eps(p,q))} \|P_i(D)u\|_p.
        \qquad \text{for }1\leq p\leq  q\leq \infty,\;j\in\Z, j\geq j_0.
  $$
\end{prop}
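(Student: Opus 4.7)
The plan is to define $\mathcal T_j$ via a dyadic decomposition of $u_1$ in the signed distance to $S$, and then use (A1) to factor $2^{j\alpha_i}$ and a benign multiplier out of the resulting symbol so that Lemma~\ref{lem:Tdelta2} (or Lemma~\ref{lem:Tdelta2d=1} for $d=1$) yields the asserted bound.

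Cover $\supp(\tau)$ by finitely many coordinate charts in which, after a permutation, $S$ admits a local graph representation $\xi_d = \psi_\ell(\xi')$. Let $\{\tau_\ell\}$ be a subordinate partition of unity of $\tau$ and choose $\chi_\ell \in C_0^\infty(\R^{d-1})$ equal to $1$ on the $\xi'$-projection of $\supp(\tau_\ell)$. With $\eta$ as in the preliminaries set
\[
  \mathcal T_j^{(\ell)} u := \mathcal F^{-1}\!\bigl(\eta(2^j(\xi_d - \psi_\ell(\xi')))\,\tau_\ell(\xi)\,\hat u(\xi)\bigr), \qquad \mathcal T_j := \sum_\ell \mathcal T_j^{(\ell)}.
\]
Because $\eta(2^j\cdot)$ sums to $1$ on $\R\setminus\{0\}$ and $\tau_\ell$ forces $|\xi_d-\psi_\ell(\xi')|$ to remain bounded, the summands vanish for $j$ below some uniform $j_0 \in \Z$, giving $u_1 = \sum_{j\ge j_0}\mathcal T_j u$.

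From $|\nabla F|\ne 0$ on $S$ and the implicit function theorem, in each chart $F(\xi) = g_\ell(\xi)(\xi_d - \psi_\ell(\xi'))$ for some smooth nowhere-zero $g_\ell$; we may assume $g_\ell > 0$ (otherwise swap $a_{i+}\leftrightarrow a_{i-}$). Writing $t := \xi_d - \psi_\ell(\xi')$, assumption (A1) gives
\[
  P_i(\xi) = g_\ell(\xi)^{\alpha_i}\bigl[a_{i+}(\xi)\,t_+^{\alpha_i} + a_{i-}(\xi)\,t_-^{\alpha_i}\bigr]
\]
throughout the chart. Decompose $\eta = \eta_+ + \eta_-$ with $\supp(\eta_\pm) \subset \pm[\tfrac12,2]$; since these bumps are supported away from $0$, the functions
\[
  \tilde\eta_{i,\bullet}(s) := \eta_\bullet(s)\,|s|^{-\alpha_i} \in C_0^\infty(\R), \qquad m_{i,\bullet}(\xi) := \frac{\tau_\ell(\xi)}{g_\ell(\xi)^{\alpha_i}\,a_{i\bullet}(\xi)} \in C_0^\infty(\R^d)
\]
are well defined for every $\alpha_i > -1$, with $m_{i,\bullet}$ an $L^\mu$-multiplier for all $\mu \in [1,\infty]$ by Proposition~\ref{prop:multiplier}. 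Because $\eta_\bullet(2^j t)$ pins down the sign of $t$, one checks the key identity
\[
  \eta_\bullet(2^j t)\,\frac{\tau_\ell(\xi)}{P_i(\xi)} \;=\; 2^{j\alpha_i}\,\tilde\eta_{i,\bullet}(2^j t)\,m_{i,\bullet}(\xi), \qquad \bullet \in \{+,-\}.
\]

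Multiplying and dividing by $P_i$ in the symbol of $\mathcal T_j^{(\ell)}$ and splitting $\eta$ accordingly yields
\[
  \mathcal T_j^{(\ell)} u \;=\; 2^{j\alpha_i}\sum_{\bullet\in\{+,-\}} \tilde T_j^{\bullet}\!\bigl(m_{i,\bullet}(D)\,P_i(D)u\bigr),
\]
where each $\tilde T_j^\bullet$ is of the form~\eqref{eq:def_TildeTj} with $\tilde\eta_{i,\bullet}$ in place of $\eta$ and $\chi_\ell$ in place of $\chi$; the cutoff $\chi_\ell$ may be inserted harmlessly because it equals $1$ on the $\xi'$-projection of $\supp(m_{i,\bullet})$. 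Applying Lemma~\ref{lem:Tdelta2} (or Lemma~\ref{lem:Tdelta2d=1} if $d=1$) to $\tilde T_j^\bullet$ together with the $L^p$-boundedness of $m_{i,\bullet}(D)$ gives $\|\mathcal T_j^{(\ell)}u\|_q \les 2^{j(\alpha_i - A_\eps(p,q))}\|P_i(D)u\|_p$, and summing over the finitely many charts concludes. The delicate step is the factorization identity above: the smoothness of $|s|^{-\alpha_i}\eta_\bullet(s)$ relies on $\eta_\bullet$ being supported away from $0$, while the distinguishing clauses of (A1) at $\alpha_i \in \{0,1\}$ are precisely what guarantee that $m_{i,\bullet}$ remains smooth (and $P_i$ itself well defined) across those thresholds.
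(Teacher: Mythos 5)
Your proof is correct and follows essentially the same route as the paper: the same chart-by-chart partition of $\tau$, the same dyadic operators $\mathcal T_j^{(\ell)}$ built from $\eta(2^j(\xi_d-\psi_\ell(\xi')))$, the same factorization of $2^{j\alpha_i}$ out of the symbol using (A1), and the same appeal to Lemma~\ref{lem:Tdelta2} plus multiplier boundedness of the leftover smooth compactly supported factor. Your explicit splitting $\eta=\eta_++\eta_-$ to pin down the sign of $\xi_d-\psi_\ell(\xi')$ is a slightly more careful rendering of what the paper absorbs into the $\tau_{l\pm}$ notation in~\eqref{eq:SymbolLocalization}, but it is not a different argument.
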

\begin{proof}
   Recall $u_1= \mathcal F^{-1}(\tau\hat u)$ where $\tau$ was chosen in~\eqref{eq:u1u2}; we first consider
    the case $d\geq 2$. According to Assumption~(A1) there are $\tau_1,\ldots,\tau_L\in
    C_0^\infty(\R^d)$ such that $\tau_1+\ldots+\tau_L=\tau$ holds and $S\cap \supp(\tau_l) = \{ \xi \in\supp(\tau_l) : \tilde\xi_d=\psi_l(\tilde\xi') \text{ where
    }\tilde\xi=\Pi_l\xi\}$. Here, $\Pi_l$ denotes some permutation of coordinates in~$\R^d$. Since $P$
    vanishes of order $\alpha$ near the surface in the sense of Assumption (A1), we may write
  \begin{align} \label{eq:SymbolLocalization}
    \begin{aligned}
     &P(\xi)^{-1}\tau_l(\xi) =   \left[ \tau_{l+}(\xi)  
    (\tilde\xi_d-\psi_l(\tilde\xi'))_+^{-\alpha} 
    + \tau_{l-}(\xi)(\tilde\xi_d-\psi_l(\tilde\xi'))_-^{-\alpha} \right] \chi_l(\tilde\xi')  \\
    &\text{with}\quad
   \tau_{l+},\tau_{l-}\in C_0^\infty(\R^d),\; \chi_l\in C_0^\infty(\R^{d-1}),\; 
   \tilde\xi := \Pi_l\xi. 
  \end{aligned}
  \end{align}  
    for suitable functions $\chi_l,\psi_l$ that satisfy \eqref{eq:chipsi}. 
  In view of this we define 
  \begin{align*}
    \mathcal T_j := \sum_{l=1}^L \mathcal T_j^l
    \quad\text{where }
   \mathcal T_j^l u &:=  \mathcal F^{-1}\left( \tau_l(\xi)\hat u(\xi) \, 
   \eta(2^j(\tilde\xi_d-\psi_l(\tilde\xi'))) \chi_l(\tilde\xi')  \right)\quad (\tilde
   \xi=\Pi_l\xi).      
  \end{align*}
  Since $0$ does not belong to the support of $\eta$, there is $j_0\in\Z$ such that 
  $u_1= \sum_{j=j_0}^\infty \mathcal T_j u$ in the sense of distributions.
  We introduce the smooth function $\eta_{i}(z) := \eta(z) |z|^{-\alpha_i}$. Then
  Lemma~\ref{lem:Tdelta2} yields
 \begin{align*}
   \|\mathcal T_j u\|_q
   &\les \sum_{l=1}^L \|\mathcal T_j^l u\|_q \\
   &= \sum_{l=1}^L  \|\mathcal F^{-1}\left( \eta(2^j(\tilde\xi_d-\psi_l(\tilde\xi'))) \chi_l(\tilde\xi')\,
    \tau_l(\xi)\hat u(\xi) \right)\|_q  \\
   &= \sum_{l=1}^L \|\mathcal F^{-1}\left( 
   \eta(2^j(\tilde\xi_d-\psi_l(\tilde\xi'))) \chi_l(\tilde\xi') \, P_i(\xi)^{-1}\tau_l(\xi)
   \widehat{P_i(D)u}(\xi) \right)\|_q \\
   &\stackrel{\eqref{eq:SymbolLocalization}}= \sum_{l=1}^L  2^{j\alpha_i} \|\mathcal F^{-1}\left(
   \eta_{i}(2^j(\tilde\xi_d-\psi_l(\tilde\xi'))) \chi_l(\tilde\xi') 
   (\tau_{li+}(\xi)+\tau_{li-}(\xi))\widehat{P_i(D)u}(\xi)  \right)\|_{q}  \\
   &\les \sum_{l=1}^L  2^{j(\alpha_i-A_\eps(p,q))} \|\mathcal F^{-1}\left(
   (\tau_{li+}(\xi)+\tau_{li-}(\xi))\widehat{P_i(D)u}(\xi)\right)\|_p \\
   &\les 2^{j(\alpha_i-A_\eps(p,q))} \|P_i(D)u\|_p  
 \end{align*}
  In the last inequality we used that $\tau_{li+},\tau_{li-}$ are $L^p$-multipliers since their Fourier
  transforms are integrable. 
 \end{proof}

 In the forthcoming analysis we shall need the following auxiliary result.
 The proof mainly follows Stein's analysis of oscillatory integrals on \cite[p.380-386]{Stein1993}.

 \begin{prop} \label{prop:Lalpha}
   Assume $0\leq \alpha<\frac{1}{2}$ and that $\chi,\psi$ are as in \eqref{eq:chipsi}, 
   $\tau\in C_0^\infty(\R^d)$,  set 
   $$
     L_\alpha u:= \mathcal F^{-1}\left((\xi_d-\psi(\xi'))_+^{-\alpha} \chi(\xi')
     \tau(\xi) u\right).
   $$ 
   Then  $L_\alpha:L^2(\R^d)\to L^q(\R^d)$ is a bounded linear
   operator for $q:= \frac{2(k+2)}{k+2-4\alpha}$.
 \end{prop}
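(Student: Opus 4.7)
The strategy is Stein's complex interpolation, modelled on the oscillatory-integral argument on pp.~380--386 of Stein's book. I would introduce the analytic family
\[
 T_z u := \mathcal F^{-1}\bigl(e^{z^2}(\xi_d-\psi(\xi'))_+^{-z}\chi(\xi')\tau(\xi)\hat u\bigr),\qquad 0\leq \Real z\leq \tfrac{1}{2},
\]
so that $L_\alpha = e^{-\alpha^2} T_\alpha$ for $\alpha\in[0,\tfrac{1}{2})$; the factor $e^{z^2}$ supplies the subexponential decay in $|\Im z|$ demanded by Stein's theorem on each vertical line. Interpolating between bounds on $\Real z=0$ and $\Real z=\tfrac{1}{2}$ at the interior point $\Real z=\alpha$ with parameter $\theta=2\alpha$ yields the exponent $\tfrac{1}{q_\alpha}=(1-2\alpha)\cdot\tfrac{1}{2}+2\alpha\cdot\tfrac{k}{2(k+2)}=\tfrac{k+2-4\alpha}{2(k+2)}$, matching $q_\alpha = 2(k+2)/(k+2-4\alpha)$.

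On $\Real z=0$ the multiplier has modulus $\leq e^{-(\Im z)^2}\|\chi\tau\|_\infty$, so Plancherel yields $\|T_{it}u\|_2\les e^{-t^2}\|u\|_2$. On $\Real z=\tfrac{1}{2}$, writing $z=\tfrac{1}{2}+it$ and changing variables $\xi_d = s+\psi(\xi')$ one has
\[
 T_z u(x) = e^{z^2}\int_0^\infty s^{-z} e^{ix_d s}\,E_s u(x)\,ds,
\]
where
\[
 E_s u(x) := \int_{\R^{d-1}}\chi(\xi')\tau(\xi',s+\psi(\xi'))\hat u(\xi',s+\psi(\xi'))\, e^{i(x'\cdot\xi'+x_d\psi(\xi'))}\,d\xi'
\]
is an extension operator on the translated surface $S_s=\{\xi_d=s+\psi(\xi')\}$. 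By compactness of $\supp\chi$ each $S_s$ retains $k$ non-vanishing principal curvatures uniformly in the relevant range of $s$, so the Stein--Tomas theorem gives $\|E_s u\|_{L^{q_{\mathrm{ST}}}}\les h(s):=\|\chi(\xi')\tau(\xi',s+\psi)\hat u(\xi',s+\psi)\|_{L^2(\R^{d-1})}$ with $q_{\mathrm{ST}}=2(k+2)/k$, and Fubini gives $\|h\|_{L^2(0,\infty)}\leq\|u\|_2$. The goal is $\|T_{1/2+it}u\|_{q_{\mathrm{ST}}}\les P(|t|)\|u\|_2$ for some polynomial $P$.

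The main obstacle is precisely this endpoint bound on $\Real z=\tfrac{1}{2}$. A naive Minkowski--Cauchy--Schwarz step gives $\|T_{1/2+it}u\|_{q_{\mathrm{ST}}}\les \int_0^T s^{-1/2} h(s)\,ds\les\bigl(\int_0^T s^{-1}\,ds\bigr)^{1/2}\|h\|_{L^2}=\infty$, because of the logarithmic divergence of $\int s^{-1}\,ds$. The fix is to keep the oscillation $e^{ix_d s}$ alive rather than absorbing it by the triangle inequality: dualising against $v\in L^{q_{\mathrm{ST}}'}$ with the modulated test $v_s(x):=v(x)e^{-ix_d s}$ reduces the estimate to the oscillatory integral $\int_0^\infty s^{-1/2-it}\langle E_s u, v_s\rangle\,ds$, where the phase $s^{-it}=e^{-it\log s}$ combines with the residual oscillation in $x_d$ to permit an integration-by-parts/stationary-phase argument that tames the critical $s^{-1/2}$ singularity and yields $\les P(|t|)\|u\|_2\|v\|_{q_{\mathrm{ST}}'}$. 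This is exactly the oscillatory-integral content of Stein's treatment. Once both vertical-line estimates are secured, Stein's analytic interpolation delivers $T_\alpha:L^2\to L^{q_\alpha}$, and hence $L_\alpha:L^2\to L^{q_\alpha}$, finishing the proof.
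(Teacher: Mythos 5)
Your interpolation scheme has a fatal gap at the line $\Real z=\tfrac12$, and it is not one that an oscillatory-integral refinement can repair: the endpoint bound $\|T_{1/2+it}u\|_{2(k+2)/k}\les P(|t|)\|u\|_2$ is \emph{false}. Indeed, if $T_{1/2}$ mapped $L^2$ to $L^{q_{\mathrm{ST}}}$ boundedly, then by duality the multiplier $m(\xi)=(\xi_d-\psi(\xi'))_+^{-1/2}\chi(\xi')\tau(\xi)$ would satisfy $\|m\hat f\|_2\les\|f\|_{q_{\mathrm{ST}}'}<\infty$ for every Schwartz $f$; but
$$
  \|m\hat f\|_2^2=\int_0^\infty s^{-1}\Big(\int_{\R^{d-1}}\chi^2\tau^2|\hat f|^2\big|_{\xi_d=s+\psi(\xi')}\,d\xi'\Big)\,ds=\infty
$$
whenever $\hat f$ does not vanish on the surface, by exactly the logarithmic divergence you noticed. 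Since $|s^{-1/2-it}|=s^{-1/2}$, the same computation rules out every $t$, so there is nothing for the phase $e^{-it\log s}$ or the modulation $e^{ix_ds}$ to save: after dualising, the divergence sits in the $\xi$-integral defining $\langle T_{1/2+it}u,v\rangle$ itself, not in a conditionally convergent oscillatory integral. Note also that your endpoint is precisely the excluded case $\alpha=\tfrac12$ of the proposition, which should have been a warning sign.

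The paper circumvents this in two ways at once. First, it runs the argument on the $TT^*$ family rather than on $L_\alpha$ itself: it bounds $M_{1-2\alpha}\simeq L_\alpha L_\alpha^*$, whose symbol is $(\xi_d-\psi(\xi'))_+^{-2\alpha}\chi^2$, from $L^{q'}$ to $L^{q}$ and then recovers $L_\alpha:L^2\to L^q$ from $\|L_\alpha^*g\|_2^2=\langle L_\alpha L_\alpha^*g,g\rangle$. Second, and crucially, the analytic family is built from Stein's $\Gamma$-regularised distributions $\gamma_s(y)=\tfrac{e^{s^2}}{\Gamma(s)}y^{s-1}\zeta(y)\ind_{y>0}$, continued to all $s\in\C$; the interpolation runs between $\Real s=1$ (where Plancherel gives $L^2\to L^2$) and $\Real s=-\tfrac{k}{2}$ (where the decay $|\hat\gamma_s(\lambda)|\les(1+|\lambda|)^{-\Real s}$ cancels exactly against the stationary-phase growth $(1+|z_d|)^{-k/2}$ of the extension kernel, giving a bounded convolution kernel and hence $L^1\to L^\infty$). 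For $0<\alpha<\tfrac12$ the target $s=1-2\alpha$ then lies strictly inside the strip, so no critical endpoint is ever touched. If you want to salvage your write-up, you must switch to this $TT^*$ plus $\Gamma$-regularisation framework; the direct interpolation of $L_z$ between $\Real z=0$ and $\Real z=\tfrac12$ cannot work.
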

 \begin{proof}  
   Define the family of distributions $\gamma_s$ as in \cite[p.381]{Stein1993} 
   (called $\alpha_s$ in this book)  via
   $$
     \gamma_s(y) = \frac{e^{s^2}}{\Gamma(s)}y^{s-1} \zeta(y) 1_{y>0} 
     \qquad \text{if }\Re(s)>0.
   $$
   where $\zeta$ is smooth with compact support and $\zeta(y)=1$ for $|y|\leq y_0$ where $y_0$
   is chosen so large that $\zeta(\xi_d-\psi(\xi'))=1$ holds whenever $\chi(\xi')\tau(\xi)\neq 0$. 
   The family $(\gamma_s)$ is extended to all $s\in\C$ via analytic continuation.
   Then introduce the family of linear operators 
   $$
     M_s f:= \mathcal F^{-1}\left(\chi(\xi')^2 \gamma_s(\xi_d-\psi(\xi'))\hat f\right).
   $$
   Plancherel's Identity gives     
   $$
     \|M_s f\|_2 \les \|f\|_2 \qquad \text{if }\Re(s)=1.
   $$
   On the other hand 
   \begin{align*}
     M_s f
     = \Phi\ast f,\qquad
     \Phi(z):= \hat \gamma_s(-z_d) \cdot 
     \int_{\R^{d-1}} \chi(\xi')^2 e^{iz\cdot(\xi',\psi(\xi'))}\,d\xi' 
   \end{align*}
   From eq.~(15) in \cite{Stein1993} and eq.~(32) in \cite{ManSch} we infer
   $$
     |\Phi(z)|
     \les (1+|z_d|)^{-\Re(s)} (1+|z_d|)^{-\frac{k}{2}} 
     \les 1
     \qquad\text{if }\Re(s)=-\frac{k}{2}.
   $$
   We conclude
   $$
     \|M_s f\|_\infty \les \|f\|_1 \qquad \text{if }\Re(s)=-\frac{k}{2}.
   $$    
   Furthermore, for any given Schwartz functions $f,g$ the function $s\mapsto \int_{\R^d} (M_s f)g$ is
   holomorphic in the open strip $-\frac{k}{2}<\Re(s)<1$ with continuous extension to the boundary.
   So the family $M_s$ is admissible for Stein's Interpolation Theorem 
   \cite[Theorem~1]{Stein_Interpolation} and we obtain
   $$
     \|M_{1-2\alpha} f\|_q \les \|f\|_{q'} \qquad \text{if }
     \theta\in [0,1],\; 1-2\alpha = (1-\theta)\cdot (-\frac{k}{2})+\theta\cdot 1,\; 
     \frac{1}{q} = \frac{1-\theta}{\infty}+\frac{\theta}{2}.
   $$ 
   This leads to $\theta=\frac{2(k+2-4\alpha)}{2(k+2)}$ and $q= \frac{2(k+2)}{k+2-4\alpha}$.
   In view of $0<2\alpha<1$ this implies
   $$
     \|\mathcal F^{-1}\left(\chi(\xi')^2 (\xi_d-\psi(\xi'))_+^{-2\alpha}
     \zeta(\xi_d-\psi(\xi'))
     \hat f\right) \|_q \les \|f\|_{q'}. 
   $$
   Now we consider functions $\hat f = \tau^2 \hat g$. By choice of $\zeta$ and of $y_0$ we then have 
   $$
     \|\mathcal F^{-1}\left(\chi(\xi')^2 (\xi_d-\psi(\xi'))_+^{-2\alpha}
     \tau(\xi)^2 \hat g\right) \|_q 
     \les \|\mathcal F^{-1}(\tau^2 \hat g)\|_{q'}
     \les \|g\|_{q'}. 
   $$
   This implies the claim  given that that this operator coincides with $L_\alpha L_\alpha^*$. 
 \end{proof}
 
 We now use the dyadic estimates from Proposition~\ref{prop:DyadicEstimatesII} to prove 
 Gagliardo-Nirenberg inequalities   in the special case $P_1(D)=P_2(D)$ where the exponents satisfy
 $A_\eps(p,q)=\alpha\in [0,1]$. This result plays the same role in the critical frequency regime as
 Proposition~\ref{prop:BesselPotentials} does in the non-critical regime. For $d\geq 2$ we concentrate on exponents with $1\leq p\leq 2\leq q\leq \infty$.
  
  \begin{lem}\label{lem:SobolevIneq}
    Assume $d\in\N$ and let $P:=P_1=P_2$ satisfy (A1) for $\alpha:=\alpha_1=\alpha_2\in [0,1]$. Then 
    $\|u_1\|_q \les \|P(D)u\|_p$ holds for all $u\in\mathcal S(\R^d)$ provided that 
    \begin{itemize}
      \item[(i)] $d=1$ and $1\leq p,q\leq \infty$ satisfy $\frac{1}{p}-\frac{1}{q}=\alpha$ and, if
      $0<\alpha<1$, $(p,q)\notin \{(1,\frac{1}{1-\alpha}),(\frac{1}{\alpha},\infty)\}$, 
      \item[(ii)]  $d\geq 2$ and $1\leq p\leq 2\leq q\leq \infty$ satisfy    
      $\frac{1}{p}-\frac{1}{q}  =   \frac{2\alpha}{k+2}$ and $\min\{\frac{1}{p},\frac{1}{q'}\}
      > \frac{k+2\alpha}{2(k+1)}$.  
    \end{itemize}
    The estimate $\|u_1\|_{q,\infty} \les \|P(D)u\|_p$ holds for exponents as in (i),(ii) or 
    \begin{itemize}
      \item[(iii)] $d=1,p=1,q=\frac{1}{1-\alpha}$ if $\alpha\in (0,1)$,
      \item[(iv)] $d\geq 2, 1\leq p<\frac{2(k+1)}{k+2\alpha},
      q=\frac{2(k+1)}{k+2-2\alpha}$ if $\alpha\in (\frac{1}{2},1]$. 
    \end{itemize}
  \end{lem}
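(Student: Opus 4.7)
The plan is to combine the dyadic bounds from Proposition~\ref{prop:DyadicEstimatesII} with Bourgain's summation lemma~\ref{lem:SummationLemma} and a subsequent real interpolation step. The core obstacle is that in all of the cases (i)--(iv) the target $(p, q)$ lies on the scaling-critical curve $A_\eps(p, q) = \alpha$ (or its boundary), where the dyadic series $u_1 = \sum_{j \geq j_0} \mathcal T_j u$ has non-summable terms, so a direct geometric summation is unavailable.

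For the strong estimates in (i) and (ii), we first produce an intermediate bound
\begin{equation*}
  \|u_1\|_{L^{q^*}} \lesssim \|P(D) u\|_{L^{p^*, 1}}
\end{equation*}
for any target $(p^*, q^*)$ on the critical curve that lies strictly inside the admissible region. This follows from \eqref{eq:SummationI} applied with $X_i = P(D)^{-1} L^{p_i}$ and $Y_1 = Y_2 = L^{q^*}$, where $p_1 < p^* < p_2$ are small perturbations of $p^*$ off the critical curve. Since $\beta(p') := \alpha - A_\eps(p', q^*)$ is linear in $1/p'$ in the region where $A_1$ is the active minimum and vanishes at $p^*$, the sign structure $\beta(p_1) < 0 < \beta(p_2)$ and the balancing condition $(1-\theta)\beta(p_1) + \theta \beta(p_2) = 0$ are simultaneously satisfied at the $\theta$ determined by $1/p^* = (1-\theta)/p_1 + \theta/p_2$. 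The strict hypotheses in (i) and (ii) ensure these perturbations remain admissible and avoid the exceptional set $\mathcal E$ of Figure~\ref{fig:Tdeltabounds}. Applying this intermediate bound at two distinct targets $(p_a, q_a), (p_b, q_b)$ on the critical curve near $(p, q)$ with $p_a \neq p_b$ and $q_a \neq q_b$, real interpolation via \cite[Theorem~5.3.1]{BerghLoefstrom1976} then yields
\begin{equation*}
  \|u_1\|_{L^q}
   = \|u_1\|_{(L^{q_a}, L^{q_b})_{1/2, q}}
   \lesssim \|P(D) u\|_{(L^{p_a, 1}, L^{p_b, 1})_{1/2, q}}
   = \|P(D) u\|_{L^{p, q}}
   \lesssim \|P(D) u\|_{L^p},
\end{equation*}
the last step being the Lorentz embedding $L^p \hookrightarrow L^{p, q}$ valid for $p \leq q$, which holds on the critical curve for $\alpha > 0$ (the case $\alpha = 0$ is trivial).

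For the weak estimates in (iii) and (iv), the target sits on the boundary of the admissible region and the two-target interpolation above is not available. Instead, we apply Lemma~\ref{lem:SummationLemma} directly with $X_1 = X_2 = P(D)^{-1} L^p$ and $Y_i = L^{q_i}$ for $q_1 < q < q_2$ chosen so that $\alpha - A_\eps(p, q_i)$ have opposite signs. Since $(X_1, X_2)_{\theta, 1} = L^p$ and $(Y_1, Y_2)_{\theta, \infty} = L^{q, \infty}$, \eqref{eq:SummationI} yields directly
\begin{equation*}
  \|u_1\|_{L^{q, \infty}} \lesssim \|P(D) u\|_{L^p}.
\end{equation*}
The range conditions $\alpha > 1/2$ in (iv) and $p < \frac{2(k+1)}{k+2\alpha}$ ensure that auxiliary exponents $q_i \in [1, \infty]$ with the required straddling sign structure exist; the analogous argument covers (iii).

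The main obstacle is the geometric bookkeeping: verifying for each configuration that the auxiliary pairs $(p_i, q^*)$ and $(p, q_i)$ used in Bourgain's summation, as well as the nearby critical-curve pairs $(p_a, q_a), (p_b, q_b)$ used in the real interpolation step, all stay within the admissible Riesz-diagram region and avoid the exceptional strata $\mathcal E$. This requires a case-by-case analysis but follows by continuity from the strict hypotheses in (i)--(iv).
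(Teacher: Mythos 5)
Your mechanism (dyadic bounds from Proposition~\ref{prop:DyadicEstimatesII}, Bourgain summation, then a two-point real interpolation to strip the Lorentz refinement) is exactly what the paper uses for part of the statement — namely the open critical segment in (i) for $\alpha\in(0,1)$, case (ii) for $\alpha\in[\frac12,1)$, and the weak bounds (iii) and (iv) for $\alpha\in(\frac12,1)$. But three families of cases that the lemma explicitly includes cannot be reached this way, and they are precisely the ones you dismiss as ``geometric bookkeeping''.

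First, in (ii) with $\alpha\in(0,\frac12)$ the endpoints $p=2,\ q=\frac{2(k+2)}{k+2-4\alpha}$ and its dual are admissible (for $\alpha<\frac12$ one has $\frac12>\frac{k+2\alpha}{2(k+1)}$, so the hypothesis of (ii) is \emph{not} strict there). At $p=2$ the level set $\{A=\alpha\}$ has a corner: for $p<2$ it is the line $A_1=\alpha$ of slope $1$, while for $p>2$ it is governed by $A_4'$, whose level line has slope $1+\frac{2(2d-k-2)}{k+2}\neq 1$. Hence no two points of the critical set average to this endpoint, your intermediate bound only yields $\|u_1\|_{q}\les\|P(D)u\|_{L^{2,1}}$ there, and the two-point interpolation cannot upgrade it to an $L^2$ source. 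The paper needs a genuinely different tool here: Proposition~\ref{prop:Lalpha} (Stein's analytic interpolation with the complex-power distributions $\gamma_s$ and a $TT^*$ argument) gives the sharp $L^2\to L^q$ endpoint directly, and the rest of the green segment is then obtained by interpolating the two endpoints. Second, for $\alpha=1$ (both in (ii) and (iv)) your straddling condition is unachievable: $A_\eps(p,q)\leq A_0=1$ everywhere, so $\beta=\alpha-A_\eps\geq0$ on \emph{both} sides of any perturbation and \eqref{eq:SummationI} never applies. The paper instead invokes the uniform resolvent estimates $(P(D)+i\delta)^{-1}:L^p\to L^q$ from \cite[Section~5]{ManSch}. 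Third, in (i) the cases $\alpha=0$ with $p=q\in\{1,\infty\}$ and $\alpha=1$ with $(p,q)=(1,\infty)$ admit no perturbation at all inside $\{p\leq q\}\cap[1,\infty]^2$; these are handled in the paper using the structural hypotheses on $a_{i\pm}$ in (A1) (so that $\tau_lP^{-1}$ is a smooth compactly supported multiplier when $\alpha=0$, resp. via $\|v\|_\infty\les\|v'\|_1$ when $\alpha=1$). None of these follows ``by continuity'', so the proposal as written proves a strictly weaker lemma.
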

  \begin{proof}
    With the same notations as before we have  
  \begin{align*}
    \begin{aligned}
     &P(\xi)^{-1}\tau_l(\xi) =   \left[ \tau_{l+}(\xi)  
    (\tilde\xi_d-\psi_l(\tilde\xi'))_+^{-\alpha} 
    + \tau_{l-}(\xi)(\tilde\xi_d-\psi_l(\tilde\xi'))_-^{-\alpha} \right] \chi_l(\tilde\xi')  \\
    &\text{with}\quad
   \tau_{l+},\tau_{l-}\in C_0^\infty(\R^d),\; \chi_l\in C_0^\infty(\R^{d-1}),\; 
   \tilde\xi := \Pi_l\xi. 
  \end{aligned}
  \end{align*}  
    for  functions $\chi_l,\psi_l$ that satisfy \eqref{eq:chipsi}. So $u_1=\sum_{j=j_0}^\infty
    \mathcal T_j u$. Assuming $1\leq p\leq 2\leq q\leq \infty$ are chosen as above we obtain (ii),(iv) as
    follows: 
    \begin{itemize}
      \item Case   $d\geq 2, \alpha=0$. \\ 
      Our assumptions give that $A_\eps(p,q)=\alpha=0$ only occurs for $p=q=2$. Here the
      estimate $\|u_1\|_2\les \|P(D)u\|_2$  follows from  Plancherel's Theorem.
      \item Case  $d\geq 2, \alpha\in (0,1)$. \\ 
       We first consider the case $\alpha<\frac{1}{2}$. By assumption, $(\frac{1}{p},\frac{1}{q})$ lies on
      the green diagonal line in Figure~\ref{fig:SobolevIneq}. By
      Proposition~\ref{prop:Lalpha}, the claimed inequality holds for the endpoints of that line given 
      by $p=2,q=\frac{2(k+2)}{k+2-4\alpha}$ and its dual $p=\frac{2(k+2)}{k+2+4\alpha},q=2$.
      Interpolating these two estimates with each other provides the desired inequality for all tuples on
      the green line in Figure~\ref{fig:SobolevIneq} and thus proves the claim for $\alpha<\frac{1}{2}$. \\
      Now consider the case $\alpha\geq \frac{1}{2}$. Our  assumptions imply that
      $(\frac{1}{p},\frac{1}{q})$ lies on the blue line in Figure~\ref{fig:SobolevIneq} with endpoints
      excluded. In particular, $(\frac{1}{p},\frac{1}{q})$ is in the interior of the $A_1$-region, so
      $A(\tilde p,\tilde q)=\frac{k+2}{2}(\frac{1}{\tilde p}-\frac{1}{\tilde q})$ for all $(\tilde p,\tilde
      q)$ close to $(p,q)$. For small $\delta>0$ we choose $\frac{1}{q_1} = \frac{1}{q}+\delta$,
      $\frac{1}{q_2} = \frac{1}{q}-\delta$. Interpolating the estimates for $(p,q_1)$ and $(p,q_2)$ with
      interpolation parameter $\theta=\frac{1}{2}$ gives, due to
      $(1-\theta) A_\eps(p,q_1)+\theta A_\eps(p,q_2)=\alpha$, the weak estimate $\|u\|_{q,\infty}\les
	  \|P(D)u\|_p$. Here we used $u_1=\sum_{j=j_0}^\infty  \mathcal T_j u$, the dyadic
	  estimates from Proposition~\ref{prop:DyadicEstimatesII} and the Interpolation Lemma~\ref{lem:SummationLemma}.
	  These weak estimates hold for all $(\frac{1}{p},\frac{1}{q})$ on the blue line with endpoints excluded.
	  Interpolating these inequalities with each other gives $\|u\|_q\les \|P(D)u\|_p$ for the same set of
	  exponents, which proves (ii) for $\alpha\in (0,1)$. \\ 
	  The prove the weak estimate from (iv) assume $\alpha\in (\frac{1}{2},1)$. For any given
	  $(\frac{1}{p},\frac{1}{q})$ on the dashed horizontal blue line in Figure~\ref{fig:SobolevIneq} with left
	  endpoint excluded we can choose $q_1,q_2$ as above and the same argument gives $\|u\|_{q,\infty}\les
	  \|P(D)u\|_p$. Since these exponents are given by $1\leq p<\frac{2(k+1)}{k+2\alpha}$ and
	  $q=\frac{2(k+1)}{k+2-2\alpha}$, we are done.  
      \item  Case  $d\geq 2, \alpha=1$. \\ 
      It was shown in \cite[Section~5]{ManSch} that the linear operators $(P(D)+i\delta)^{-1}:L^p(\R^d)\to
      L^q(\R^d)$ are uniformly bounded with respect to small $|\delta|>0$ given that our additional
      regularity assumptions on $P$ from (A1) imply that $S=\{\xi\in\R^d: P(\xi)=0\}$ is a smooth compact manifold with $|\nabla P|\neq 0$ on $S$.
      This implies $\|u_1\|_q \les \|P(D)u\|_p$ and analogous arguments yield the weak bounds claimed in (iv).
    \end{itemize}

\begin{figure}[htbp]
\begin{tikzpicture}[scale=10]
\draw[->] (0,0) -- (1.05,0) node[right]{$\frac{1}{p}$};
\draw[->] (0,0) -- (0,1.05) node[above]{$\frac{1}{q}$};
\draw (0,0) --(1,1);
\draw (1,1) -- (1,0) node[below]{$1$};
\draw (0,1) node[left]{$1$};
\coordinate (O) at (0,0);
\coordinate (O') at (1,1);
\coordinate (ST) at (0.5,0.25); 
\coordinate (ST') at (0.75,0.5);
\coordinate (R) at (0.67,0.17); 
\coordinate (R') at (0.83,0.33);
\coordinate (H) at (0.5,0); 
\coordinate (H') at (1,0.5);
\coordinate (C) at (0.5,0.5);
\coordinate (E) at (1,0);
\coordinate (ER) at (0.67,0);
\coordinate (ER') at (1,0.33);
\draw (0.85,0.15) node [rounded corners=8pt]
{$A_0$};
\draw (0.65,0.4) node [rounded corners=8pt]
{$A_1$};
\draw (0.9,0.38) node [rounded corners=8pt]
{$A_2$};
\draw (0.62,0.1) node [rounded corners=8pt]
{$A_2'$};
\draw (0.95,0.75) node [rounded corners=8pt]
{$A_3$};
\draw (0.25,0.05) node [rounded corners=8pt]
{$A_3'$};
\draw (0.7,0.6) node [rounded corners=8pt]
{$A_4$};
\draw (0.4,0.27) node [rounded corners=8pt]
{$A_4'$};
\draw  (C) -- (H);
\draw  (C) -- (H');
\draw  (C) -- (O);
\draw   (C) -- (O');
\draw  (ST) -- (O);
\draw  (ST') -- (O');
\draw  (ST) -- (R);
\draw  (ST') -- (R'); 
\draw   (R) -- (R');
\draw  (R) -- (ER);
\draw     (R') -- (ER');
\draw [dotted] (0,0.375) node[left]{$\frac{k+2-4\alpha_2}{2(k+2)}$} -- (0.5,0.375);
\draw [dotted] (0,0.312) node[left]{$\frac{(d+1-2\alpha_2)k+2-4\alpha_2}{2d(k+2)}$} --
(0.5,0.312); 
\draw [line width = 0.4mm, draw=magenta]   (0.5,0.312) -- (0.688,0.5);
\draw [line width = 0.4mm, draw=green]  (0.5,0.375) -- (0.625,0.5);
\draw [line width = 0.4mm, draw=blue]   (0.792,0.417) -- (0.583,0.208);
\draw [dashed, line width = 0.4mm, draw=blue] (1,0.417) -- (0.792,0.417);
\draw [dashed, line width = 0.4mm, draw=blue] (0.583,0.208)  -- (0.583,0)
node[below]{\hspace{-3mm}$\frac{k+2\alpha_1}{2(k+1)}$};
\end{tikzpicture}
 \caption{ 
 Riesz diagram showing the exponents $1\leq p\leq 2\leq q\leq \infty$ 
 satisfying $A_\eps(p,q)=\alpha$ in the case $\alpha=\alpha_1\in (\frac{1}{2},1)$
 (blue) and for $\alpha=\alpha_2\in (0,\frac{1}{2})$ (green). 
 For the green resp. non-dashed blue exponent pairs Lemma~\ref{lem:SobolevIneq} (i),(ii) gives  $\|u\|_q\leq
 \|P(D)u\|_p$. In the case $\alpha=\alpha_2$ the corresponding estimates    
 from \cite[Theorem~1.4~(ii)]{ManSch} only hold for exponents on the magenta line.
 The picture was produced with parameter values
  $(d,k,\alpha_1,\alpha_2)=(4,2,\frac{3}{4},\frac{1}{4})$.}
   \label{fig:SobolevIneq}
\end{figure}
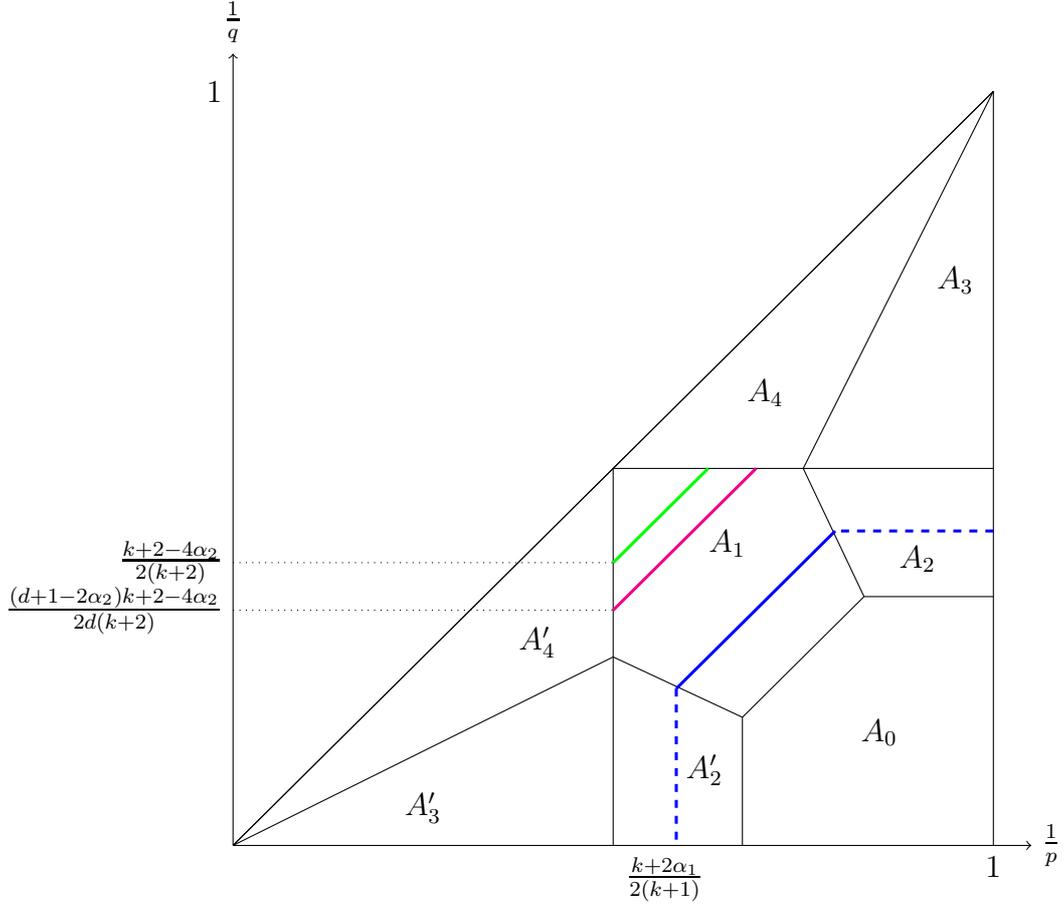
    
    \medskip
    
    Next we turn to the one-dimensional case $d=1$. The representation formula then reads
    \begin{align} \label{eq:formulau1}
     u_1 
     &= \sum_{l=1}^L \mathcal F^{-1}\left( \big[\tau_{l+}(\xi) (\xi-\xi_l^*)_+^{-\alpha} 
     +\tau_{l-}(\xi) (\xi-\xi_l^*)_-^{-\alpha}\big]\widehat{P(D)u}\right)  
   \end{align}
    where $\{P(\xi)=0\}=\{\xi^*_1,\ldots,\xi^*_L\}$. Using our assumption
    $\frac{1}{p}-\frac{1}{q}=\alpha$ we obtain the claims (i),(iii) from the following arguments:
    \begin{itemize}
      \item Case   $d=1, \alpha=0$. \\ 
      We then have $p=q$ and we first analyze $1<p=q<\infty$. In this case 
      the Hilbert transform $f\mapsto \mathcal F^{-1}(\sign(\xi)\hat f)$ is bounded on $L^p(\R)$, and so is
      $f\mapsto \mathcal F^{-1}(\sign(\xi-\xi_l^*)\hat f)$ for $l=1,\ldots,L$. 
      So the representation formula~\eqref{eq:formulau1} implies $\|u_1\|_p \les  \| P(D)u \|_p$.  
      In the case  $p=q\in \{1,\infty\}$ we  make use of our additional regularity assumption 
      $\tau_{l}:=\tau_{l+}=\tau_{l-}$ from (A1), so 
      \begin{align*}
        \|u_1\|_p
        \leq \sum_{l=1}^L \|\mathcal F^{-1} (\tau_{l} \widehat{P(D)u})\|_p 
        \les \sum_{l=1}^L \|  \mathcal F^{-1}(\tau_l) \ast (P(D)u) \|_p  
        \les \|P(D)u\|_p.   
      \end{align*}
      Here we used that $\mathcal F^{-1}(\tau_l)$ is a Schwartz function for $l=1,\ldots,L$. 
      \item Case $d=1,  \alpha\in (0,1)$ \\  
      If $1<p<q<\infty$ we deduce the claimed estimate from the boundedness of the Hilbert transform on
      $L^q(\R)$ and the Riesz potential estimate $\|\mathcal F^{-1}( |\cdot|^{-\alpha}\hat f)\|_q \les
      \|f\|_p$.
       For  $p=1,0<\alpha<1$ we have a weak estimate $\|\mathcal F^{-1}( |\cdot|^{-\alpha}\hat f)\|_{q,\infty}
       \les \|f\|_1$, see~\cite[Theorem 1.2.3]{Graf_Modern}. Note that the Hilbert transform
       is bounded on $L^{q,\infty}(\R)$ as well by real interpolation.  
      \item Case   $d=1,\alpha=1$. \\ 
      We now have $\frac{1}{p}-\frac{1}{q}=1$, so $p=1,q=\infty$. We exploit  
      the additional smoothness assumption $\tau_{l+}=-\tau_{l-}$ from (A1). Then       
      $P\in C^\infty(\R)$ is a smooth function with simple zeros $\xi_1^*,\ldots,\xi_L^*$. To prove the
      claimed inequality we start with the trivial estimate  $\|v\|_\infty\les \|v'\|_1 = \|\mathcal
      F^{-1}(i\xi \hat v)\|_1$ for all $v\in\mathcal S(\R)$. Translation in Fourier space gives
      $\|v\|_\infty\les \|  \mathcal F^{-1}(i(\xi-\xi_l^*)\hat v) \|_1$  for all $u\in\mathcal
      S(\R),l=1,\ldots,L$.
      So \eqref{eq:formulau1} implies as above  
      \begin{align*}
        \|u_1\|_\infty
        &\les \sum_{l=1}^L \|\mathcal F^{-1}((\xi-\xi_l^*)^{-1}\tau_l \widehat{P(D)u})\|_\infty\\
        &\les \sum_{l=1}^L \|  \mathcal F^{-1}(\tau_l\widehat{P(D)u}) \|_1  \\
        &\les \|P(D)u\|_1.
      \end{align*}      
    \end{itemize}
 \end{proof}

  As remarked in Figure~\ref{fig:SobolevIneq}, claim (ii) of the previous lemma improves upon the
  corresponding bounds from \cite[Theorem~1.4]{ManSch} in the case $0<\alpha<\frac{1}{2}$.   
  We finally combine all these estimates to prove Gagliardo-Nirenberg inequalities in the critical frequency
  regime. Given the rather complicated definition of $A_\eps(p,q)$, an explicit characterization of the admissible
  exponents is possible in principle, but extremely laborious. We prefer to avoid most of the 
  computations. Instead, we describe the set of admissible exponents in an abstract way and provide the
  required computations in the reasonably simple special case $1\leq p\leq 2\leq q\leq \infty$ that allows to
  prove our main results.  Proceeding in this way it becomes clear, how eventual improvements of
  Lemma~\ref{lem:Tdelta2} affect the final range of exponents. Once more we exploit Bourgain's summation
  argument, which allows us to argue almost as in the large frequency regime. On a formal level, comparing
  Lemma~\ref{lem:Tdelta} (large frequencies) with Lemma~\ref{lem:Tdelta2} (critical frequencies),  
  we essentially have to replace $s_i-d(\frac{1}{r_i}-\frac{1}{q_i})$ by $A_\eps(r_i,q_i)-\alpha_i$ because
  the summation index now ranges from some $j=j_0$ to $+\infty$ and not from $j=j_0$ to $-\infty$.
  It will be convenient to formulate our sufficient conditions in terms of $\ov \alpha:=
  (1-\kappa)\alpha_1+\kappa\alpha_2$.  
  
  \medskip
  
  We provide a definition of the set $\mathcal A(\kappa)$ of exponents $(q,r_1,r_2)$ that are
  admissible for
  \begin{equation}\label{eq:GN_u1}
    \|u_1\|_q \les \|P_1(D)u\|_{r_1}^{1-\kappa}\|P_2(D)u\|_{r_2}^\kappa 
    \qquad (u\in\mathcal S(\R^d)).
  \end{equation}
  Lemma~\ref{lem:SobolevIneq} provides the definition for $\kappa\in\{0,1\}$, namely
  \begin{align} \label{eq:A(0)}
    \begin{aligned}
    \mathcal A(0) &:=  \Big\{ (q,r_1,r_2)\in [1,\infty]^3:  (q,r_1,\alpha_1) \text{ as in
    Lemma~\ref{lem:SobolevIneq}~(i),(ii)} \Big\}, \\
    \mathcal A(1) &:=  \Big\{ (q,r_1,r_2)\in [1,\infty]^3:  (q,r_2,\alpha_2) \text{ as in
    Lemma~\ref{lem:SobolevIneq}~(i),(ii)} \Big\}.
  \end{aligned}
  \end{align}
  In the case $0<\kappa<1$ the definition is more involved and relies on the 
  Interpolation Lemma~\ref{lem:SummationLemma} and the dyadic estimates for critical frequencies from
  Proposition~\ref{prop:DyadicEstimatesII}.
  Combining the latter with~\eqref{eq:SummationII} we obtain $\|u_1\|_q\les \|u\|_{(X_1,X_2)_{\kappa,q}}$
  and deduce \eqref{eq:GN_u1}  for exponents $(q,r_1,r_2)$ belonging to the set  
 \begin{align*}
     \mathcal A_1(\kappa)
     &:= \Big\{ (q,r_1,r_2)\in [1,\infty]^3: \text{ There are }
     \eps>0,\; q_1\in [r_1,\infty],\; q_2\in [r_2,\infty],  \text{ such that} \\ 
      &\qquad \frac{1}{q}=\frac{1-\kappa}{q_1}+\frac{\kappa}{q_2} \text{ and }
         (1-\kappa) A_\eps(r_1,q_1)+   \kappa  A_\eps(r_2,q_2)> \ov\alpha \Big\}. 
    \intertext{This result covers   all non-endpoint cases in our considerations
    further below. Using~\eqref{eq:SummationI} with $Y_1=Y_2=L^q(\R^d)$ we obtain $\|u_1\|_q
    \les \|P_1(D)u\|_{r_1}^{1-\kappa}\|P_2(D)u\|_{r_2}^\kappa$  for   exponents in} 
     \mathcal A_2(\kappa)
     &:= \Big\{ (q,r_1,r_2)\in [1,\infty]^3:  q\geq \max\{r_1,r_2\} \text{ and there is }
     \eps>0 \text{ such that }\\
     &\qquad  (1-\kappa) A_\eps(r_1,q) + \kappa  A_\eps(r_2,q)= \ov\alpha,\,
        A_\eps(r_i,q)\neq \alpha_i \,(i=1,2)
     \Big\}. 
    \intertext{
    Next we use $\|u\|_q = \|u\|_q^{1-\kappa}\|u\|_q^\kappa$ to deduce further
    estimates from Lemma~\ref{lem:SobolevIneq}  for exponents in } 
    \mathcal A_3(\kappa)
     &:= \Big\{ (q,r_1,r_2)\in [1,\infty]^3:  
     (q,r_1,\alpha_1),\,(q,r_2,\alpha_2) \text{ as in Lemma~\ref{lem:SobolevIneq}~(i),(ii)} \Big\}.
    \intertext{Using \eqref{eq:SummationI}
     with $Y_1=L^{q_1}(\R^d),Y_2=L^{q_2}(\R^d)$    we   get the weak bound
    $\|u_1\|_{q,\infty} \les \|u\|_{(X_1,X_2)_{\kappa,1}}$ for   exponents belonging to}
     \mathcal A_4^w(\kappa)
      &:= \Big\{ (q,r_1,r_2)\in [1,\infty]^3: \text{ There are }\eps>0,\;
     q_1\in [r_1,\infty],\; q_2\in [r_2,\infty] \text{ such that} \\
       &\qquad (1-\kappa) A_\eps(r_1,q_1) + \kappa  A_\eps(r_2,q_2)=
       \ov\alpha,\; \frac{1}{q}=\frac{1-\kappa}{q_1}+\frac{\kappa}{q_2},\; \alpha_i \neq
       A_\eps(r_i,q_i),\;q_1\neq q_2 \Big\}.
     \intertext{Interpolating the (weak or strong) endpoint estimates 
     for $\mathcal A_2(\kappa)\cup\mathcal A_3(\kappa)\cup \mathcal A_4^w(\kappa)$
     with each other exactly as in the final step of the proof of of
     Proposition~\ref{prop:LargeFreqInterpolated} we deduce $\|u_1\|_q \les \|u\|_{(X_1,X_2)_{\kappa,q}} \les \|P_1(D)u\|_{r_1}^{1-\kappa}\|P_2(D)u\|_{r_2}^\kappa
     $ for exponents from}
     \mathcal A_4(\kappa)
    &:= \Big\{ (q,r_1,r_2)\in [1,\infty]^3: \text{ There are } \eps\neq 0,\delta>0,\, \tilde q,q^*\in
    [1,\infty],\,\tilde\kappa,\kappa^*\in (0,1) \text{ with }    \\
      &\qquad \frac{1}{\tilde q}-\eps = \frac{1}{q} = \frac{1}{q^*}+\eps,\,
      \tilde\kappa-\delta=\kappa=\kappa^*+\delta
      \text{ and }  \\
    &\qquad (\tilde q,r_1,r_2) \in\mathcal A^w_4(\tilde\kappa)\cup\mathcal
      A_3(\tilde\kappa)\cup\mathcal A_2(\tilde\kappa),\; (q^*,r_1,r_2) \in\mathcal A^w_4(\kappa^*)\cup\mathcal
      A_3(\kappa^*)\cup\mathcal A_2(\kappa^*) \Big\}.
  \end{align*}

  Summarizing these interpolation results we obtain the following interpolation inequality in the critical
  frequency regime.
   
\begin{prop} \label{prop:CritFreqInterpolated}
  Assume $d\in\N, \kappa\in [0,1]$ and (A1) for $\alpha_1,\alpha_2>-1$. Then
  $$
     \| u_1 \|_q  \les \|P_1(D)u\|_{r_1}^{1-\kappa} \|P_2(D)u\|_{r_2}^\kappa
     \qquad (u\in\mathcal S(\R^d))
  $$
  holds provided that $(q,r_1,r_2)\in\mathcal A(\kappa):= \mathcal A_1(\kappa)\cup\mathcal A_2(\kappa)\cup
  \mathcal A_3(\kappa)\cup\mathcal A_4(\kappa)$.
\end{prop}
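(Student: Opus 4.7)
The plan is to prove the inequality by a case distinction corresponding exactly to the four classes $\mathcal{A}_1(\kappa),\mathcal{A}_2(\kappa),\mathcal{A}_3(\kappa),\mathcal{A}_4(\kappa)$, each of which is designed around a specific use of the Interpolation Lemma~\ref{lem:SummationLemma}. The spaces on the right hand side are the Bessel-type spaces $X_i:=P_i(D)^{-1}L^{r_i}(\R^d)$ introduced in Section~2, so that via~\eqref{eq:interpolationfunctor} any bound of the form $\|u_1\|_{(Y_1,Y_2)_{\kappa,r}} \lesssim \|u\|_{(X_1,X_2)_{\kappa,r}}$ automatically yields the desired Gagliardo–Nirenberg estimate. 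The building blocks are the dyadic bounds
$$
\|\mathcal{T}_j u\|_{q_i} \lesssim 2^{j(\alpha_i - A_\varepsilon(r_i,q_i))} \|u\|_{X_i} \qquad (j \geq j_0,\, i=1,2)
$$
from Proposition~\ref{prop:DyadicEstimatesII}, together with the identity $u_1 = \sum_{j=j_0}^\infty \mathcal{T}_j u$. The endpoint cases $\kappa\in\{0,1\}$ are immediate from Lemma~\ref{lem:SobolevIneq}, which is precisely what $\mathcal{A}(0),\mathcal{A}(1)$ encode in~\eqref{eq:A(0)}.

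For $\kappa\in(0,1)$, the first case $(q,r_1,r_2)\in\mathcal{A}_1(\kappa)$ is the non-endpoint regime: we choose $q_1\in[r_1,\infty]$, $q_2\in[r_2,\infty]$ with $1/q = (1-\kappa)/q_1 + \kappa/q_2$ so that the summation exponents $(1-\kappa)(A_\varepsilon(r_1,q_1)-\alpha_1) + \kappa(A_\varepsilon(r_2,q_2)-\alpha_2)$ are strictly positive, then invoke~\eqref{eq:SummationII} with $\theta=\kappa$ and $r=q$, and pass to $L^q(\R^d)$ via the standard real interpolation identity $(L^{q_1},L^{q_2})_{\kappa,q}=L^q$ when $q_1\neq q_2$, or trivially when $q_1=q_2=q$. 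The second case $(q,r_1,r_2)\in\mathcal{A}_2(\kappa)$ is the balanced-exponent endpoint with $q_1=q_2=q$: here we apply~\eqref{eq:SummationI}, which is the genuine Bourgain summation bound, and the conditions $A_\varepsilon(r_i,q)\neq\alpha_i$ are precisely what is needed to enforce the nondegeneracy hypothesis $\beta_1,\beta_2\neq 0$ of Lemma~\ref{lem:SummationLemma}.

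The third case $(q,r_1,r_2)\in\mathcal{A}_3(\kappa)$ is the simplest: Lemma~\ref{lem:SobolevIneq} provides the two Sobolev-type bounds $\|u_1\|_q\lesssim\|P_i(D)u\|_{r_i}$ for $i=1,2$ individually, and the product decomposition $\|u_1\|_q = \|u_1\|_q^{1-\kappa}\|u_1\|_q^\kappa$ finishes the estimate. The fourth case requires two sub-steps. First, for $(q,r_1,r_2)\in\mathcal{A}_4^w(\kappa)$ we have $q_1\neq q_2$ with correct homogeneity and both $A_\varepsilon(r_i,q_i)\neq\alpha_i$; here~\eqref{eq:SummationI} in the genuine Lorentz space $(L^{q_1},L^{q_2})_{\kappa,\infty}=L^{q,\infty}$ produces a restricted weak-type bound $\|u_1\|_{q,\infty}\lesssim\|u\|_{(X_1,X_2)_{\kappa,1}}$. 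Then for the full $\mathcal{A}_4(\kappa)$ we pick the neighbouring triples $(\tilde q,r_1,r_2)$ and $(q^*,r_1,r_2)$ with $\tilde\kappa=\kappa+\delta$, $\kappa^*=\kappa-\delta$, obtain the corresponding weak-type or strong bounds from $\mathcal{A}_2\cup\mathcal{A}_3\cup\mathcal{A}_4^w$, and then real-interpolate with parameter $\tfrac12$ using the reiteration theorem~\cite[Theorem~3.5.3]{BerghLoefstrom1976}; this upgrades the two weak endpoints to a strong estimate on $L^q(\R^d)$, in complete analogy with Step~4 of the proof of Proposition~\ref{prop:LargeFreqInterpolated}.

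The main obstacle is not any single inequality—each one is already prepared by the preceding propositions—but the bookkeeping that ensures every triple $(q,r_1,r_2)$ claimed in the statement really does belong to at least one of the four admissibility classes, and that the parameters $q_1,q_2,\varepsilon,\tilde q,q^*,\delta$ can be chosen consistently with the constraints built into the definitions of $\mathcal{A}_i(\kappa)$. In particular, the upgrade from weak-type to strong estimates in $\mathcal{A}_4(\kappa)$ requires that a neighbourhood of $(q,\kappa)$ in $\mathcal{A}_4^w\cup\mathcal{A}_2\cup\mathcal{A}_3$ is available, which is the technical reason for the somewhat elaborate definition of $\mathcal{A}_4(\kappa)$.
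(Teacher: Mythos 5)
Your plan is correct and follows essentially the same route as the paper, which in fact carries out exactly this argument in the text defining the sets $\mathcal A_1(\kappa),\dots,\mathcal A_4(\kappa)$ rather than in a separate proof environment: \eqref{eq:SummationII} for $\mathcal A_1$, \eqref{eq:SummationI} with $Y_1=Y_2=L^q$ for $\mathcal A_2$, the product of the two Sobolev bounds from Lemma~\ref{lem:SobolevIneq} for $\mathcal A_3$, and the weak-type bounds from $\mathcal A_4^w$ upgraded by reiteration for $\mathcal A_4$. Your bookkeeping of the summation exponents $\beta_i=\alpha_i-A_\eps(r_i,q_i)$ and of the cases $\kappa\in\{0,1\}$ via \eqref{eq:A(0)} matches the paper's.
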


\section{Gagliardo-Nirenberg Inequalities, Proofs of
Theorem~\ref{thm:GNhigherDspecial} and Theorem~\ref{thm:GN1Dspecial}.}

  We first discuss the one-dimensional case. As before, we use the notation 
  $$ 
    \ov\alpha:=(1-\kappa)\alpha_1+\kappa\alpha_2
    \qquad\text{and}\qquad \ov s:= (1-\kappa)s_1+\kappa s_2.
  $$

\begin{thm} \label{thm:GN1D}
  Assume $d=1,\kappa\in [0,1]$ and that (A1),(A2) hold for $s_1,s_2\in\R$ and $\alpha_1,\alpha_2>-1$ such
  that $0<\ov\alpha\leq \ov s$. Then 
  $$
    \|u\|_q \les \|P_1(D)u\|_{r_1}^{1-\kappa} \|P_2(D)u\|_{r_2}^{\kappa}
    \qquad (u\in\mathcal S(\R))
  $$
  holds provided that $q,r_1,r_2\in [1,\infty]$ satisfy 
  $\ov\alpha\leq \frac{1-\kappa}{r_1}+\frac{\kappa}{r_2}-\frac{1}{q} \leq \ov s$ 
  as well as the conditions (i),(ii),(iii) and (iv),(v),(vi) in the endpoint cases
  $\frac{1-\kappa}{r_1}+\frac{\kappa}{r_2}-\frac{1}{q}=\ov s$ and
  $\ov\alpha=\frac{1-\kappa}{r_1}+\frac{\kappa}{r_2}-\frac{1}{q}$, respectively:
  \begin{itemize}
    \item[(i)] if $q=\infty$ then $\frac{1}{r_1}- s_1\neq 0 \neq \frac{1}{r_2}-s_2$ or
    $(r_1,r_2)=(\frac{1}{s_1},\frac{1}{s_2}), s_1,s_2\in \{0,1\}$,
    \item[(ii)] if $1<q<\infty, \frac{1}{r_1}-\frac{s_1}{d}=\frac{1}{q}=\frac{1}{r_2}-\frac{s_2}{d}$ and
    $r_1=1$  then \\ $1<r_2<q,\, \kappa\geq \frac{r_2}{q}$ or $r_2=\infty,\frac{1}{q}\leq \kappa\leq
    \frac{1}{q'}$,
    \item[(iii)] if $1<q<\infty$ and $\frac{1}{r_1}-\frac{s_1}{d}=\frac{1}{q}=\frac{1}{r_2}-\frac{s_2}{d}$
     and $r_2=1$  then \\ $1<r_1<q,\,1-\kappa\geq \frac{r_1}{q}$ or $r_1=\infty,\frac{1}{q}\leq
     1-\kappa\leq \frac{1}{q'}$,
    \item[(iv)] if $q=\infty$ then $\frac{1}{r_1}-\alpha_1\neq 0\neq \frac{1}{r_2}-\alpha_2$ or
    $(r_1,r_2)=(\frac{1}{\alpha_1},\frac{1}{\alpha_2}), \alpha_1,\alpha_2\in \{0,1\}$,
    \item[(v)] 
    if $1<q<\infty, \frac{1}{r_1}-\alpha_1=\frac{1}{q} = \frac{1}{r_2}-\alpha_2$ 
    then \\
    $\alpha_1,\alpha_2\in [0,1]$ and $r_1=1,\kappa<1$ only if $1<r_2<q,\, \kappa\geq \frac{r_2}{q}$,
    \item[(vi)] 
    if $1<q<\infty, \frac{1}{r_1}-\alpha_1=\frac{1}{q} = \frac{1}{r_2}-\alpha_2$ 
    then \\
    $\alpha_1,\alpha_2\in [0,1]$ and $r_2=1,\kappa>0$ only if $1<r_1<q,\, 1-\kappa\geq \frac{r_1}{q}$.
  \end{itemize}
\end{thm}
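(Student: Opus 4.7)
My plan is to split $u = u_1+u_2$ as in \eqref{eq:u1u2} and to bound the two pieces separately using the two dyadic interpolation results already in hand. Since $d=1$, summing the bounds $\|u\|_q \leq \|u_1\|_q+\|u_2\|_q$ will yield the claim provided that the hypotheses of Proposition~\ref{prop:LargeFreqInterpolated} and Proposition~\ref{prop:CritFreqInterpolated} are both satisfied under the assumptions of the theorem.

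For the large frequency piece $u_2$ I would invoke Proposition~\ref{prop:LargeFreqInterpolated}: in $d=1$ its scaling condition reads $0\leq \frac{1-\kappa}{r_1}+\frac{\kappa}{r_2}-\frac{1}{q}\leq \ov s$, and its endpoint conditions (i),(ii),(iii) are exactly the ones labeled (i),(ii),(iii) in the statement above (note that in $d=1$ the clause ``$d=1,(r_1,r_2)=(1/s_1,1/s_2),s_1,s_2\in\{0,1\}$'' is already folded in). Because $\ov\alpha>0$ by hypothesis, $\frac{1-\kappa}{r_1}+\frac{\kappa}{r_2}-\frac{1}{q}\geq \ov\alpha$ is stronger than nonnegativity, so nothing more has to be checked and $\|u_2\|_q \les \|P_1(D)u\|_{r_1}^{1-\kappa}\|P_2(D)u\|_{r_2}^\kappa$ is immediate.

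For the critical frequency piece $u_1$ I would apply Proposition~\ref{prop:CritFreqInterpolated} and verify that the triple $(q,r_1,r_2)$ lies in $\mathcal A(\kappa)$. Lemma~\ref{lem:Tdelta2d=1} tells us that in $d=1$ the exponent governing the dyadic pieces is simply $A_\eps(p,q)=\frac{1}{p}-\frac{1}{q}$, with no exceptional set $\mathcal E$ and no $\eps$-loss. Consequently the set $\mathcal A_1(\kappa)$ reduces to the strict open condition $\ov\alpha<\frac{1-\kappa}{r_1}+\frac{\kappa}{r_2}-\frac{1}{q}$, which covers the non-endpoint regime via \eqref{eq:SummationII}. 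In the endpoint $\ov\alpha=\frac{1-\kappa}{r_1}+\frac{\kappa}{r_2}-\frac{1}{q}$ I would mimic Steps~1--4 of the proof of Proposition~\ref{prop:LargeFreqInterpolated}: the cases $q\in\{1,\infty\}$ and $1<q<\infty$ with $1<r_1,r_2<\infty$ follow from Bourgain summation \eqref{eq:SummationI} applied to the dyadic bounds of Proposition~\ref{prop:DyadicEstimatesII} together with Lemma~\ref{lem:SobolevIneq}~(i) (interpreted in $d=1$, with the endpoint $(r_1,r_2)=(1/\alpha_1,1/\alpha_2)$ for $\alpha_i\in\{0,1\}$ handled separately as in the proof of that Lemma). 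The conditions (iv)--(vi) then appear as the verbatim 1D analogues of (i)--(iii), with $s_i$ replaced by $\alpha_i$.

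The main technical obstacle is the treatment of the endpoint cases in which some of $r_1,r_2$ equal $1$ or $\infty$, so that Mikhlin-type arguments and plain summation are unavailable. Here I would reproduce the interpolation scheme of Step~3 of Proposition~\ref{prop:LargeFreqInterpolated}: interpolate the strong Sobolev-type estimate from Lemma~\ref{lem:SobolevIneq}~(i) with the restricted weak-type estimate of Lemma~\ref{lem:SobolevIneq}~(iii) via the identity \eqref{eq:InterpolationIdentity}, and in the $r_2=\infty$ case invoke Stein's interpolation theorem on the family $\mathcal T^s u = e^{s^2}\langle D\rangle^{s_2+s(s_1-s_2)}u$ to handle the $\BMO$ endpoint. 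Carrying these arguments out in $d=1$ is conceptually identical to Step~3 but requires verifying that the restricted weak-type and Bourgain summation bounds fit together to recover precisely the conditions (v),(vi). Once this matching is completed, $(q,r_1,r_2)\in\mathcal A(\kappa)$ holds in all cases and the proof is finished by adding the two estimates.
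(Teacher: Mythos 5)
Your proposal follows essentially the same route as the paper: decompose $u=u_1+u_2$, treat $u_2$ with Proposition~\ref{prop:LargeFreqInterpolated}, and verify $(q,r_1,r_2)\in\mathcal A(\kappa)$ for $u_1$ using $A_\eps(p,q)=\frac{1}{p}-\frac{1}{q}$ in $d=1$, with the non-endpoint range covered by $\mathcal A_1(\kappa)$ and the endpoints by restricted weak-type bounds plus the Lorentz interpolation identity~\eqref{eq:InterpolationIdentity}, exactly as in the paper's final step. The only superfluous element is the Stein/BMO interpolation for $r_2=\infty$ in the critical regime: that case cannot arise in (v),(vi), since $\frac{1}{r_2}-\alpha_2=\frac{1}{q}>0$ with $r_2=\infty$ would force $\alpha_2<0$, contradicting $\alpha_2\in[0,1]$ --- which is precisely why (v),(vi) are not verbatim analogues of (ii),(iii).
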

\begin{proof}
  Proposition~\ref{prop:LargeFreqInterpolated} shows that the large frequency part of the inequality
  (involving $s_1,s_2$ and thus (i),(ii),(iii)) holds. In view of Proposition~\ref{prop:CritFreqInterpolated}
  it remains to show that all exponents satisfying  
  $\ov\alpha\leq \frac{1-\kappa}{r_1}+\frac{\kappa}{r_2}-\frac{1}{q}$ with (iv),(v),(vi) in the endpoint case
  $\ov\alpha= \frac{1-\kappa}{r_1}+\frac{\kappa}{r_2}-\frac{1}{q}$ are covered by $\mathcal A(\kappa)$. 
  In the case $\kappa=0$ this holds by definition of $\mathcal A(0)$ from~\eqref{eq:A(0)}  
  because the requirement  $(r_1,q)\notin\{1,\frac{1}{1-\alpha},\frac{1}{\alpha},\infty\}$ if $0<\alpha<1$
  from Lemma~\ref{lem:SobolevIneq}~(i) is met by (iv),(v),(vi). The discussion for $\kappa=1$ is analogous.
  So from now on consider the case $0<\kappa<1$.
  
  \medskip
  
  We now retrieve some information about $\mathcal A(\kappa)$ by exploiting the formula
  $A_\eps(p,q)=\frac{1}{p}-\frac{1}{q}$ for $1\leq p\leq q\leq\infty$, see~\eqref{eq:def_Aeps}. 
  Going back to the definition of the sets $\mathcal A_i(\kappa)$ we find 
  \begin{align*}
     \mathcal A_1(\kappa)
     &= \Big\{ (q,r_1,r_2)\in [1,\infty]^3:\,  
     \frac{1-\kappa}{r_1}+\frac{\kappa}{r_2}-\frac{1}{q}>\ov\alpha  \Big\},   \\
     \mathcal A_2(\kappa)
     &\supset \Big\{ (q,r_1,r_2)\in [1,\infty]^3:\,  
      \frac{1-\kappa}{r_1}+\frac{\kappa}{r_2}-\frac{1}{q}=\ov\alpha,\,
      0\leq \frac{1}{r_i}-\frac{1}{q}\neq \alpha_i \text{ for }i=1,2\Big\}, \\
    \mathcal A_3(\kappa)
     &\supset \Big\{ (q,r_1,r_2)\in [1,\infty]^3:    
     \frac{1-\kappa}{r_1}+\frac{\kappa}{r_2}-\frac{1}{q}=  \ov\alpha,\,
     \frac{1}{r_i}-\frac{1}{q}=\alpha_i\in [0,1]
     \text{ and } \\
     &\qquad (r_i,q)\notin
     \big\{\big(1,\frac{1}{1-\alpha_i}\big),\big(\frac{1}{\alpha_i},\infty\big)\big\} \text{ if
     }\alpha_i\in (0,1) \text{ for }i=1,2\Big\}.
   \end{align*}
   Since the interpolation inequality holds for these exponents, our claim is proved in the following cases:
   \begin{itemize}
     \item $\frac{1-\kappa}{r_1}+\frac{\kappa}{r_2}-\frac{1}{q}>\ov\alpha$: see $\mathcal A_1(\kappa)$.
     \item $\frac{1-\kappa}{r_1}+\frac{\kappa}{r_2}-\frac{1}{q}=\ov\alpha$ and $q=1$:  
     we necessarily have $\ov \alpha=0,r_1=r_2=1$, which is covered by $\mathcal A_2(\kappa)$
     for $\alpha_1,\alpha_2\neq 0$ or $\mathcal A_3(\kappa)$ for $\alpha_1=\alpha_2=0$, respectively. 
     \item $\frac{1-\kappa}{r_1}+\frac{\kappa}{r_2}-\frac{1}{q}=\ov\alpha$ and $q=\infty$: 
     $\frac{1}{r_1}-\alpha_1\neq 0\neq \frac{1}{r_2}-\alpha_2$ is covered by $\mathcal A_2(\kappa)$
     and $\frac{1}{r_1}-\alpha_1=0=\frac{1}{r_2}-\alpha_2$ with $\alpha_1,\alpha_2\in\{0,1\}$ is covered by
     $\mathcal A_3(\kappa)$. 
   \end{itemize}
   So it remains to show the remaining endpoint estimates dealing with $1<q<\infty$. By definition of
   $\mathcal A_4^w(\kappa)$ we have restricted weak-type estimates  for exponents from
   \begin{align*}
    \mathcal A_4^w(\kappa)
     &= \Big\{ (q,r_1,r_2)\in [1,\infty]^3:\, 
       \frac{1-\kappa}{r_1}+\frac{\kappa}{r_2}-\frac{1}{q}= \ov\alpha\text{ and there are }
       q_1\in [r_1,\infty], q_2\in [r_2,\infty]  \\
     &\qquad \text{ such that }
       q_1\neq q_2,\; \frac{1}{r_i}-\frac{1}{q_i}\neq \alpha_i \,(i=1,2),\; 
       \frac{1-\kappa}{q_1}+\frac{\kappa}{q_2} = \frac{1}{q}  
       \Big\}
       \\
     &= \Big\{ (q,r_1,r_2)\in [1,\infty]^3:\, 
       \frac{1-\kappa}{r_1}+\frac{\kappa}{r_2}-\frac{1}{q}= \ov\alpha,\;
       1<q<\infty  \Big\}.
   \end{align*}
   (Indeed, thanks to $\ov \alpha>0$ we may choose $\frac{1}{q_1} := \frac{1}{r_1}-\eps$ and
   $\frac{\kappa}{q_2} := \frac{1}{q}-\frac{1-\kappa}{q_1}$ for small $\eps>0$ provided that $1\leq
   r_1<\infty$, analogously for $r_2<\infty$.) This implies    
   $$
     \mathcal A_4(\kappa)
      \supset \Big\{ (q,r_1,r_2)\in [1,\infty]^3:\,
      \frac{1-\kappa}{r_1}+\frac{\kappa}{r_2}-\frac{1}{q} =  \ov\alpha,\, 
      1<q<\infty, \, \frac{1}{r_1}-\frac{1}{r_2} \neq \alpha_1-\alpha_2 \Big\}.
  $$
  This yields the claim for the following exponents:    
  \begin{itemize}
    \item $\frac{1-\kappa}{r_1}+\frac{\kappa}{r_2}-\frac{1}{q}=\ov\alpha,\,1<q<\infty$ and
    $\frac{1}{r_1}-\frac{1}{r_2}\neq \alpha_1-\alpha_2$, which is covered by $\mathcal A_4(\kappa)$,
    \item $\frac{1-\kappa}{r_1}+\frac{\kappa}{r_2}-\frac{1}{q}=\ov\alpha,\,1<q<\infty$ and 
    $\frac{1}{r_i}-\frac{1}{q} = \alpha_i\in [0,1]$ with 
    $(r_i,q)\neq (1,\frac{1}{1-\alpha_i})$ if $\alpha_i\in (0,1)$, which is covered by $\mathcal A_3(\kappa)$.
  \end{itemize}
  So it remains to prove the claim for 
  \begin{align*}
    &1<q<\infty,\;\frac{1}{r_1}-\alpha_1=\frac{1}{q} =
   \frac{1}{r_2}-\alpha_2 \qquad\text{and} \\
    &\big[\; r_1=1<r_2<q,\; 1>\kappa\geq \frac{r_2}{q} \quad\text{or}\quad  r_2=1<r_1<q,\; 1>1-\kappa\geq
    \frac{r_1}{q}\; \big].
  \end{align*}
  By symmetry we may concentrate on $r_1=1<r_2<q, 1> \kappa\geq
  \frac{r_2}{q}$ where the estimate follows from 
  $$
    \|u\|_q 
    \stackrel{\eqref{eq:InterpolationIdentity}}\les  \|u\|_{q,\infty}^{1-\kappa} \|u\|_{q,\kappa
    q}^{\kappa} \les \|u\|_{q,\infty}^{1-\kappa} \|u\|_{q,r_2}^{\kappa}
    \les \|P_1(D)u\|_1^{1-\kappa} \|P_2(D)u\|_{r_2}^{\kappa}.
  $$  
  Here we used Proposition~\ref{prop:BesselPotentials}~(iv) and ~(ii) (for $r=r_2$).   
  This finishes the proof.
\end{proof}

 \medskip
 
 \noindent\textbf{Proof of Theorem~\ref{thm:GN1Dspecial}:} We apply Theorem~\ref{thm:GN1D}   
 to the symbols $P_1(D)=|D|^s-1,s>0$ and  $P_2(D)=I$ that  satisfy the hypotheses of the
 Theorem for $(\alpha_1,\alpha_2,s_1,s_2)=(1,0,s,0)$. Then $\ov\alpha=1-\kappa,\ov s= (1-\kappa)s$, so 
Theorem~\ref{thm:GN1D} implies that the Gagliardo-Nirenberg Inequality holds provided that $1-\kappa\leq
\frac{1-\kappa}{r_1}+\frac{\kappa}{r_2}-\frac{1}{q}\leq (1-\kappa)s$.  
The latter restriction comes from Theorem~\ref{thm:GN1D}~(i) and one checks that (ii)-(vi) are not restrictive
for our choice of parameters $(\alpha_1,\alpha_2,s_1,s_2)=(1,0,s,0),s>0$.   \qed
 
  \medskip

  We continue with the higher-dimensional case where a computation of $\mathcal A(\kappa)\cap \mathcal
  B(\kappa)$ is rather cumbersome. To simplify the discussion we concentrate on the special case
  $r_1=r_2=r\in [1,2]$ and $q\in [2,\infty]$ and only consider the special ansatz $q_1=q_2=q$ in the
  definition of the sets $\mathcal A_i(\kappa)$.

\begin{thm}\label{thm:GNhigherD}
  Assume $d\in\N,d\geq 2, \kappa\in [0,1]$ and that (A1),(A2) hold for $s_1,s_2\in\R$ and
  $\alpha_1,\alpha_2>-1$ such that $0\leq \ov\alpha\leq 1$.
  Then 
  $$
    \|u\|_q \les \|P_1(D)u\|_r^{1-\kappa} \|P_2(D)u\|_r^{\kappa}
    \qquad (u\in\mathcal S(\R^d))
  $$
  holds provided that  $\ov\alpha<1$, $\alpha_1\neq\alpha_2$, $0<\kappa<1$ and   
  the exponents $r\in [1,2],q\in [2,\infty]$ satisfy
  \begin{equation}\label{eq:GNconditionsTHM}
   \frac{2\ov\alpha}{k+2} 
    \leq \frac{1}{r}-\frac{1}{q} 
    \leq  \frac{\ov s}{d}
    \qquad\text{and}\qquad
     \min\left\{\frac{1}{r},\frac{1}{q'}\right\} 
       \geq \frac{k+2\ov\alpha}{2(k+1)} 
  \end{equation}
  as well as $(q,r)\neq (\infty,\frac{d}{\ov s})$ if $s_1=s_2= \ov s\in (0,d]$. In the case 
  $\ov\alpha=1$ or $\alpha_1=\alpha_2$ or $\kappa\in\{0,1\}$ the same is true provided that the last condition
  in \eqref{eq:GNconditionsTHM} is replaced by $\min\{\frac{1}{r},\frac{1}{q'}\} >
  \frac{k+2\ov\alpha}{2(k+1)}$. 
\end{thm}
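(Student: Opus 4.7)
My plan is to split $u = u_1 + u_2$ via~\eqref{eq:u1u2} and estimate $u_2$ with Proposition~\ref{prop:LargeFreqInterpolated} and $u_1$ with Proposition~\ref{prop:CritFreqInterpolated}, in both cases specialised to $r_1 = r_2 = r$ and, inside the sets $\mathcal A_i(\kappa)$, to the ansatz $q_1 = q_2 = q$. The real content of the proof is then to verify that the hypotheses on $(q, r)$ imply $(q, r, r) \in \mathcal A(\kappa) \cap \mathcal B(\kappa)$.

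For the large-frequency piece, the requirement $0 \leq \frac{1}{r} - \frac{1}{q} \leq \frac{\ov s}{d}$ of Proposition~\ref{prop:LargeFreqInterpolated} follows from $\frac{2\ov\alpha}{k+2} \leq \frac{1}{r} - \frac{1}{q} \leq \frac{\ov s}{d}$ together with $\ov\alpha \geq 0$. The endpoint clauses (i)--(iii) of that proposition are only active at $\frac{1}{r} - \frac{1}{q} = \frac{\ov s}{d}$: clauses (ii) and (iii) are vacuous because they require $r_1 \neq r_2$, and for $q = \infty$ clause (i) asks $\frac{1}{r} \neq \frac{s_i}{d}$ for $i = 1, 2$, which holds except precisely when $s_1 = s_2 = \ov s$, i.e., in the excluded configuration $(q, r) = (\infty, d / \ov s)$.

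For the critical-frequency piece, I would compute $A_\eps(r, q)$ explicitly in the range $1 \leq r \leq 2 \leq q \leq \infty$, using
\begin{equation*}
  A(r, q) = \min\bigl\{\, 1,\; \tfrac{k+2}{2}\bigl(\tfrac{1}{r} - \tfrac{1}{q}\bigr),\; \tfrac{k+2}{2} - \tfrac{k+1}{q},\; \tfrac{k+1}{r} - \tfrac{k}{2} \,\bigr\}.
\end{equation*}
The two hypotheses $\frac{1}{r} - \frac{1}{q} \geq \frac{2\ov\alpha}{k+2}$ and $\min\{\frac{1}{r}, \frac{1}{q'}\} \geq \frac{k + 2\ov\alpha}{2(k+1)}$ translate \emph{exactly} to $A_1, A_2, A_2' \geq \ov\alpha$, while $A_0 = 1 > \ov\alpha$ since $\ov\alpha < 1$. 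A short computation further shows that along the critical line $A_1 = \ov\alpha$ the pair $(r, q)$ enters the exceptional set $\mathcal E$ only if $\ov\alpha \geq 1$, so $A_\eps(r, q) = A(r, q) \geq \ov\alpha$ throughout. Hence $(q, r, r) \in \mathcal A_1(\kappa)$ as soon as $A(r, q) > \ov\alpha$ (choose $q_1 = q_2 = q$), and $(q, r, r) \in \mathcal A_2(\kappa)$ in the remaining case $A(r, q) = \ov\alpha$: there $q \geq 2 \geq r$ yields $q \geq \max\{r_1, r_2\}$, and $\alpha_1 \neq \alpha_2$ together with $0 < \kappa < 1$ forces $\ov\alpha$ strictly between $\alpha_1$ and $\alpha_2$, so $A_\eps(r, q) = \ov\alpha \neq \alpha_i$.

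The second paragraph of the theorem---the cases $\ov\alpha = 1$, $\alpha_1 = \alpha_2$, or $\kappa \in \{0, 1\}$---obstructs the $\mathcal A_2(\kappa)$ argument: the set $\mathcal E$ intrudes when $\ov\alpha = 1$, the requirement $A_\eps(r, q) \neq \alpha_i$ fails when $\alpha_1 = \alpha_2 = \ov\alpha$, and $\kappa \in \{0, 1\}$ is not admitted by $\mathcal A_1(\kappa) \cup \dots \cup \mathcal A_4(\kappa)$. In all three situations the strict inequality $\min\{\tfrac{1}{r}, \tfrac{1}{q'}\} > \tfrac{k + 2\ov\alpha}{2(k+1)}$ is exactly what is needed to invoke Lemma~\ref{lem:SobolevIneq}(ii) directly, placing $(q, r, r)$ either in $\mathcal A_3(\kappa)$ for $0 < \kappa < 1$ or in $\mathcal A(0) \cup \mathcal A(1)$ for $\kappa \in \{0, 1\}$. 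The main obstacle I anticipate is not any single hard estimate but the bookkeeping required to line up the exponent inequalities---and in particular to pin down the single excluded pair $(q, r) = (\infty, d/\ov s)$---against the several branches of $\mathcal A(\kappa) \cap \mathcal B(\kappa)$; each individual check is elementary, but keeping track of which hypothesis must be strict versus non-strict demands care.
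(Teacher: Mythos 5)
Your proposal is essentially the paper's own proof: the same decomposition $u=u_1+u_2$, the same reduction to Propositions~\ref{prop:LargeFreqInterpolated} and~\ref{prop:CritFreqInterpolated} with $r_1=r_2=r$ and $q_1=q_2=q$, the same translation of the hypotheses into $A_1,A_2,A_2'\geq \ov\alpha$ with $A_0=1>\ov\alpha$, and the same dispatch of the endpoint/exceptional cases to $\mathcal A_2(\kappa)$, $\mathcal A_3(\kappa)$ resp.\ $\mathcal A(0)\cup\mathcal A(1)$. One small imprecision: clauses (ii),(iii) of Proposition~\ref{prop:LargeFreqInterpolated} are not ``vacuous because $r_1\neq r_2$'' --- with $r_1=r_2=r$ they are simply never triggered unless $r=1$ and $s_1=s_2$ with $1-\frac{s_1}{d}=\frac1q$ --- but this does not affect the cases the theorem (and the paper's own argument) actually covers.
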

\begin{proof}
  The conditions for large frequencies (involving $s_1,s_2$) were shown to be sufficient in
  Proposition~\ref{prop:LargeFreqInterpolated}. So we concentrate on the critical frequency part involving
  $\alpha_1,\alpha_2$. The following computations are based on the formula
  $A_\eps(r,q) = A(r,q)-\eps\cdot \ind_{(p,q)\in\mathcal E}$ where  
  $$
    A(r,q)=\min\left\{1,\frac{k+2}{2}\left(\frac{1}{r}-\frac{1}{q}\right),\frac{k+2}{2}-
    \frac{k+1}{q},-\frac{k}{2}+\frac{k+1}{r}\right\} 
  $$
  for $1\leq r\leq 2\leq q\leq \infty$, see \eqref{eq:def_Aeps} and Figure~\ref{fig:Tdeltabounds}. 
  Our definitions of $\mathcal A_1(\kappa),\mathcal A_2(\kappa),\mathcal A_3(\kappa)$ yield in the case
  $0<\kappa<1$ 
  \begin{align*} 
    \mathcal A_1(\kappa)
    &\supset \{(q,r,r)\in [2,\infty]\times [1,2]^2 : A_\eps(r,q)> \ov\alpha \text{ for some }\eps>0\},\\
    \mathcal A_2(\kappa)
    &\supset \{(q,r,r)\in [2,\infty]\times [1,2]^2 : A_\eps(r,q)= \ov\alpha \text{ for some }\eps>0,\,
    \alpha_1\neq \ov\alpha\neq\alpha_2\}, \\
    \mathcal A_3(\kappa)
    &\supset \Big\{(q,r,r)\in [2,\infty]\times [1,2]^2 : A_\eps(r,q)= \ov\alpha \text{ for some }\eps>0,\, 
    \alpha_1=\ov\alpha=\alpha_2\in [0,1]  \\
    &\hspace{5.5cm} \text{and
    }\min\left\{\frac{1}{r},\frac{1}{q'}\right\}> \frac{k+2\ov\alpha}{2(k+1)}\Big\}.
  \end{align*}
  From $\mathcal A(\kappa)\supset \mathcal A_1(\kappa)\cup \mathcal A_2(\kappa)\cup \mathcal A_3(\kappa)$
  we thus get 
  \begin{align*}
    \mathcal A(\kappa) 
    &\supset \Big\{(q,r,r)\in [2,\infty]\times [1,2]^2 : A_\eps(r,q)\geq  \ov\alpha \text{ for some
    }\eps>0  \text{ and}\\
    &\hspace{2cm}\text{if }A_\eps(r,q)=\ov\alpha=\alpha_1=\alpha_2\in
    [0,1]  \text{ then }
    \min\left\{\frac{1}{r},\frac{1}{q'}\right\}> \frac{k+2\ov\alpha}{2(k+1)} \Big\}.
  \end{align*}
  Since $A_\eps(r,q)\geq  \ov\alpha$ for some $\eps>0$ is equivalent to 
  $$
    \frac{1}{r}-\frac{1}{q}
    \geq \frac{2\ov\alpha}{k+2}\quad\text{and}\quad  
     \min\left\{\frac{1}{r},\frac{1}{q'}\right\} \,
     \begin{cases} 
       \,\geq \frac{k+2\ov\alpha}{2(k+1)} &\text{if }\ov\alpha<1 \text{ and }\alpha_1\neq \alpha_2 \\
       \,> \frac{k+2\ov\alpha}{2(k+1)} &\text{if }\ov\alpha=1 \text{ or }\alpha_1=\alpha_2.
     \end{cases},
  $$
  This proves the claim for $0<\kappa<1$. In the case $\kappa\in \{0,1\}$ the claim follows
  from \eqref{eq:A(0)} and Lemma~\ref{lem:SobolevIneq}~(i),(ii).
\end{proof}
 
 \noindent\textbf{Proof of Theorem~\ref{thm:GNhigherDspecial}:} We apply Theorem~\ref{thm:GNhigherD} to
 $P_1(D)=|D|^s-1, P_2(D)=I$. Again, the hypotheses of the Theorem hold for
 $(\alpha_1,\alpha_2,s_1,s_2,k)=(1,0,s,0,d-1)$ because $S$ is the unit sphere with $d-1$ non-vanishing
 principal curvatures.
\qed
 

\section{Local Gagliardo-Nirenberg inequalities}

In~\cite{FerJeaManMar} it was shown that a ``local'' version of Gagliardo-Nirenberg inequalities is of
interest, too. Here one looks for a larger set of exponents where~\eqref{eq:GNgeneral} holds
under the additional hypothesis $\|P_1(D)u\|_{r_1}\leq R\|P_2(D)u\|_{r_2}$ where $R>0$ is fixed, see
Corollary~2.10 in that paper. A simple consequence of our estimates above is the following.
  
\begin{cor}\label{cor:LocalGN}
  Assume $d\in\N,\kappa\in [0,1]$ and (A1),(A2) for $s_1,s_2\in\R$ and $\alpha_1,\alpha_2>-1$. Then
  the inequality 
  $$
     \| u \|_q  \les (R^{\kappa-\kappa_1}+R^{\kappa-\kappa_2})\|P_1(D)u\|_{r_1}^{1-\kappa}
     \|P_2(D)u\|_{r_2}^\kappa 
  $$
  holds for all $u\in\mathcal S(\R^d)$ and satisfying $\|P_1(D)u\|_{r_1}\leq R\|P_2(D)u\|_{r_2}$
  provided that $(q,r_1,r_2)\in \mathcal A(\kappa_1)\cap \mathcal
   B(\kappa_2)$ holds for some $\kappa_1,\kappa_2\in [0,\kappa]$.
\end{cor}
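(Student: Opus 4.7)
The plan is to combine the two frequency-regime estimates already established, namely Proposition~\ref{prop:CritFreqInterpolated} for the critical part $u_1$ and Proposition~\ref{prop:LargeFreqInterpolated} for the large part $u_2$, and then to absorb the mismatch between the exponent $\kappa$ in the conclusion and the exponents $\kappa_1,\kappa_2$ coming from the two propositions by means of the hypothesis $\|P_1(D)u\|_{r_1}\leq R\|P_2(D)u\|_{r_2}$.

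First, I would write $u=u_1+u_2$ as in \eqref{eq:u1u2} so that $\|u\|_q\leq \|u_1\|_q+\|u_2\|_q$. Since $(q,r_1,r_2)\in\mathcal A(\kappa_1)$ with $\kappa_1\in[0,\kappa]$, Proposition~\ref{prop:CritFreqInterpolated} gives
\begin{equation*}
  \|u_1\|_q \les \|P_1(D)u\|_{r_1}^{1-\kappa_1}\|P_2(D)u\|_{r_2}^{\kappa_1}.
\end{equation*}
Similarly, since $(q,r_1,r_2)\in\mathcal B(\kappa_2)$ with $\kappa_2\in[0,\kappa]$, Proposition~\ref{prop:LargeFreqInterpolated} yields
\begin{equation*}
  \|u_2\|_q \les \|P_1(D)u\|_{r_1}^{1-\kappa_2}\|P_2(D)u\|_{r_2}^{\kappa_2}.
\end{equation*}

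Next, for $i\in\{1,2\}$ with $\kappa_i\in[0,\kappa]$, I would use the elementary identity
\begin{equation*}
  \|P_1(D)u\|_{r_1}^{1-\kappa_i}\|P_2(D)u\|_{r_2}^{\kappa_i}
  = \|P_1(D)u\|_{r_1}^{1-\kappa}\|P_2(D)u\|_{r_2}^{\kappa}
    \left(\frac{\|P_1(D)u\|_{r_1}}{\|P_2(D)u\|_{r_2}}\right)^{\kappa-\kappa_i}.
\end{equation*}
Since $\kappa-\kappa_i\geq 0$, the hypothesis $\|P_1(D)u\|_{r_1}\leq R\|P_2(D)u\|_{r_2}$ gives
$\bigl(\|P_1(D)u\|_{r_1}/\|P_2(D)u\|_{r_2}\bigr)^{\kappa-\kappa_i}\leq R^{\kappa-\kappa_i}$, so the two estimates above become
\begin{equation*}
  \|u_1\|_q\les R^{\kappa-\kappa_1}\|P_1(D)u\|_{r_1}^{1-\kappa}\|P_2(D)u\|_{r_2}^{\kappa},\qquad
  \|u_2\|_q\les R^{\kappa-\kappa_2}\|P_1(D)u\|_{r_1}^{1-\kappa}\|P_2(D)u\|_{r_2}^{\kappa}.
\end{equation*}
Summing the two bounds and using $\|u\|_q\leq \|u_1\|_q+\|u_2\|_q$ finishes the proof.

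There is essentially no obstacle here: the two frequency-regime propositions do all the real work, and the only ingredient added in the corollary is the scaling trick that trades a larger $\kappa_i$ for a power of $R$ via the constraint on the ratio $\|P_1(D)u\|_{r_1}/\|P_2(D)u\|_{r_2}$. The one small point to be careful about is ensuring the case $\|P_2(D)u\|_{r_2}=0$ is handled trivially (it forces $\|P_1(D)u\|_{r_1}=0$ and hence $u=0$ up to elements of the null spaces of $P_1(D)$ and $P_2(D)$), but this is standard.
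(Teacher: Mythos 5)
Your proposal is correct and follows exactly the same route as the paper: decompose $u=u_1+u_2$, apply Proposition~\ref{prop:CritFreqInterpolated} with $\kappa_1$ and Proposition~\ref{prop:LargeFreqInterpolated} with $\kappa_2$, convert each bound to exponent $\kappa$ via the ratio $\|P_1(D)u\|_{r_1}/\|P_2(D)u\|_{r_2}\leq R$ raised to the nonnegative power $\kappa-\kappa_i$, and sum. Your extra remark on the degenerate case $\|P_2(D)u\|_{r_2}=0$ is a harmless addition the paper omits.
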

\begin{proof}
  Choose $\kappa_1,\kappa_2$ as required. Then  Proposition~\ref{prop:CritFreqInterpolated}  gives
  \begin{align*}
    \|u_1\|_q  
    &\les\|P_1(D)u\|_{r_1}^{1-\kappa_1} \|P_2(D)u\|_{r_2}^{\kappa_1} \\
    &= (\|P_1(D)u\|_{r_1}\|P_2(D)u\|_{r_2}^{-1})^{\kappa-\kappa_1}  \cdot \|P_1(D)u\|_{r_1}^{1-\kappa}
    \|P_2(D)u\|_{r_2}^{\kappa} \\
    &\les R^{\kappa-\kappa_1} \|P_1(D)u\|_{r_1}^{1-\kappa} \|P_2(D)u\|_{r_2}^{\kappa}.
  \intertext{Similarly, Proposition~\ref{prop:LargeFreqInterpolated}   implies} \|u_2\|_q  
    &\les R^{\kappa-\kappa_2} \|P_1(D)u\|_{r_1}^{1-\kappa} \|P_2(D)u\|_{r_2}^{\kappa}.
  \end{align*}
  Summing up these inequalities gives the claim.
\end{proof}

In the context of our particular example $P_1(D)=|D|^s-1,s>0$ and $P_2(D)=I$ this gives the following
generalization of~\cite[Corollary~2.10]{FerJeaManMar}.

\begin{cor}\label{cor:LocalGN2}
  Assume $d\in\N,d\geq 2,\kappa\in (0,1),s>0$. Then 
  $$
     \|u\|_q \les  (R^{\kappa}+1) \|(|D|^s-1)u\|_r^{1-\kappa}\|u\|_r^\kappa 
  $$
  holds  for all $u\in\mathcal S(\R^d)$ satisfying $\|(|D|^s-1)u\|_r\leq R\|u\|_r$ provided
  that $(q,r)\neq (\infty,\frac{d}{s})$ if $0<s\leq d$  and 
  \begin{itemize}
    \item[(i)] $d=1,\, 1\leq r,q\leq \infty\;\text{and}\;  1-\kappa \leq \frac{1}{r}-\frac{1}{q} \leq s$ 
    \quad  or
    \item[(ii)] $d\geq 2, 1\leq r\leq 2\leq q\leq \infty\;\text{and}\;  \frac{2(1-\kappa)}{k+2} 
    \leq \frac{1}{r}-\frac{1}{q} 
    \leq  \frac{s}{d},\, \min\{\frac{1}{r},\frac{1}{q'}\} \geq
     \frac{k+2-2\kappa}{2(k+1)}$.
  \end{itemize}
\end{cor}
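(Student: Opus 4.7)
The plan is to deduce the corollary as a direct specialization of Corollary~\ref{cor:LocalGN} to the symbols $P_1(D)=|D|^s-1$ and $P_2(D)=I$, proceeding exactly as in the derivations of Theorems~\ref{thm:GNhigherDspecial} and~\ref{thm:GN1Dspecial} from their abstract counterparts. These symbols satisfy (A1) and (A2) with parameters $(\alpha_1,\alpha_2,s_1,s_2)=(1,0,s,0)$, taking $S$ to be the unit sphere (with $k=d-1$ non-vanishing principal curvatures) when $d\geq 2$, and the two-point set $S=\{\pm 1\}$ when $d=1$.

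The key flexibility in Corollary~\ref{cor:LocalGN} lies in the independent choice of $\kappa_1$ (governing the critical frequency regime $\mathcal A$) and $\kappa_2$ (governing the large frequency regime $\mathcal B$). I will set $\kappa_1=\kappa$ and $\kappa_2=0$, so that the prefactor becomes
\[
  R^{\kappa-\kappa_1}+R^{\kappa-\kappa_2}=1+R^\kappa,
\]
matching the assertion. It then suffices to show that the stated conditions on $(q,r)$ are equivalent to $(q,r,r)\in\mathcal A(\kappa)\cap\mathcal B(0)$.

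For the inclusion $(q,r,r)\in\mathcal B(0)$ we invoke Proposition~\ref{prop:LargeFreqInterpolated} with $\kappa=0$ and $s_1=s$: this amounts to the single bound $0\leq \tfrac{1}{r}-\tfrac{1}{q}\leq \tfrac{s}{d}$ in $d\geq 2$ (respectively $\leq s$ in $d=1$), giving the upper bound in (i) and (ii). The only endpoint exclusion $\tfrac{1}{r}-\tfrac{1}{q}=\tfrac{s}{d}$ with $q=\infty$ that fails condition (i) of Proposition~\ref{prop:LargeFreqInterpolated} is $(q,r)=(\infty,d/s)$ with $0<s\leq d$, which is exactly the exclusion in the statement. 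For the inclusion $(q,r,r)\in\mathcal A(\kappa)$ we apply Theorem~\ref{thm:GNhigherD} in $d\geq 2$ or Theorem~\ref{thm:GN1D} in $d=1$. With our parameters $\ov\alpha=1-\kappa\in(0,1)$ and $\alpha_1=1\neq 0=\alpha_2$, so the stronger (non-strict) version of the curvature condition in \eqref{eq:GNconditionsTHM} is available, yielding precisely the lower bound $\tfrac{1}{r}-\tfrac{1}{q}\geq \tfrac{2(1-\kappa)}{k+2}$ together with $\min\{\tfrac{1}{r},\tfrac{1}{q'}\}\geq \tfrac{k+2-2\kappa}{2(k+1)}$; in $d=1$ the analogous condition from Theorem~\ref{thm:GN1D} reduces to $1-\kappa\leq \tfrac{1}{r}-\tfrac{1}{q}$, and conditions (iv)-(vi) impose no extra restriction since $\alpha_1\neq\alpha_2$.

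The main obstacle is not analytical but bookkeeping: one has to check each endpoint exclusion of Proposition~\ref{prop:LargeFreqInterpolated} and of Theorems~\ref{thm:GN1D}/\ref{thm:GNhigherD} against the specific parameter choices $(\alpha_1,\alpha_2,s_1,s_2,\kappa_1,\kappa_2)=(1,0,s,0,\kappa,0)$. Once this verification is complete, the proof is finished by summing the two individual bounds
\[
  \|u_1\|_q \les \|(|D|^s-1)u\|_r^{1-\kappa}\|u\|_r^\kappa,\qquad
  \|u_2\|_q \les R^\kappa\|(|D|^s-1)u\|_r^{1-\kappa}\|u\|_r^\kappa,
\]
exactly as in the proof of Corollary~\ref{cor:LocalGN}.
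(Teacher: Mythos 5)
Your proposal is correct and is essentially the paper's own proof: the paper likewise obtains the corollary by specializing Corollary~\ref{cor:LocalGN} to $(\kappa_1,\kappa_2)=(\kappa,0)$ and $(\alpha_1,\alpha_2,s_1,s_2,k,r_1,r_2)=(1,0,s,0,d-1,r,r)$, computing $\mathcal A(\kappa)$ and $\mathcal B(0)$ as in the proof of Theorem~\ref{thm:GNhigherD} and noting $\ov\alpha=1-\kappa\in(0,1)$, $\alpha_1\neq\alpha_2$. Your endpoint bookkeeping (the exclusion $(q,r)=(\infty,d/s)$ coming from Proposition~\ref{prop:LargeFreqInterpolated}~(i), and the non-strict curvature condition being available because $\alpha_1\neq\alpha_2$) matches the paper's reasoning.
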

\begin{proof}
  This corresponds to the special case $(\kappa_1,\kappa_2)=(\kappa,0)$ and
  $(\alpha_1,\alpha_2,s_1,s_2,k,r_1,r_2)=(1,0,s,0,d-1,r,r)$ in
  Corollary~\ref{cor:LocalGN}. The computation of $\mathcal A(\kappa)$ and $\mathcal B(0)$  can be done as
  in the proof of Theorem~\ref{thm:GNhigherD}. Note that the assumptions imply $\ov\alpha=1-\kappa \in (0,1), \alpha_1\neq \alpha_2$ and
  $0<\kappa<1$.
\end{proof}

\section{Gagliardo-Nirenberg inequalities with unbounded characteristic sets} \label{sec:GN_wave}

In the previous sections we provided a systematic study of Gagliardo-Nirenberg Inequalities where the
characteristic set $S$ of the symbols is smooth and compact. In the case of unbounded characteristic sets our
analysis works for Schwartz functions whose Fourier transform is supported in some smooth and compact piece
of $S$, but an argument  for general Schwartz functions is lacking so far, even in
the case of simple differentiable operators with suitable scaling behaviour like the wave operator or the Schr\"odinger operator. 
In the $L^2$-setting, a less technical approach based on Plancherel's identity can be used. 
We follow the ideas presented in \cite{FerJeaManMar} to prove Gagliardo-Nirenberg inequalities of the form
\begin{align}\label{eq:GN_wave}
  \|u\|_q &\les \|\partial_{tt} u -\Delta u\|_r^{1-\kappa} \|u\|_r^\kappa \qquad (u\in\mathcal S(\R^d)),
  \\
  \|v\|_q &\les \|i\partial_t v - \Delta v\|_r^{1-\kappa} \|v\|_r^\kappa
  \qquad (v\in\mathcal S(\R^d)).
  \label{eq:GN_Schroe}
\end{align}
where $r=2$. We denote the space-time variable by $z=(x,t)\in\R^{d-1}\times\R=\R^d$.

\begin{thm}\label{thm:GN_wave}
 Let $d\in\N$. Then~\eqref{eq:GN_wave} holds provided that $r=2,q=\frac{2d}{d-4+4\kappa}$ 
 where  $\frac{1}{2}\leq\kappa\leq 1$ if $d\geq 3$
 and $\frac{1}{2}<\kappa\leq 1$ if $d=2$.
\end{thm}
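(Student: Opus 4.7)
The plan is to implement the $L^2$-based strategy from \cite{FerJeaManMar}: combine a Littlewood-Paley decomposition in frequency magnitude with a fixed-scale application of Theorem~\ref{thm:GNhigherD} to compact pieces of the light cone, exploiting almost-orthogonality in $L^2$ to reassemble the estimate.

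I would first fix a Littlewood-Paley cutoff $\phi \in C_0^\infty(\R)$ supported in $[1/2,2]$ with $\sum_{n\in\Z}\phi(2^{-n}|\zeta|)=1$ for $\zeta\neq 0$ and set $u_n:=\mathcal F^{-1}(\phi(2^{-n}|\zeta|)\hat u)$. For each $n$ the rescaled function $v_n(z):=u_n(2^{-n}z)$ has Fourier support in a fixed annulus $\{|\zeta|\sim 1\}$, whose intersection with the light cone $\{\tau^2=|\xi|^2\}$ is a compact smooth hypersurface with $k=d-2$ non-vanishing principal curvatures (the cone has exactly one flat direction along its generators). Using Remark (d) from the introduction, I would apply Theorem~\ref{thm:GNhigherD} to $v_n$ with $(P_1,P_2)=(\Box,I)$, so that $\alpha_1=1,\alpha_2=0,s_1=2,s_2=0$, $\overline\alpha=1-\kappa,\overline s=2(1-\kappa)$ and $r=2$. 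The critical scale $\frac{1}{2}-\frac{1}{q}=\frac{2\overline\alpha}{k+2}=\frac{\overline s}{d}$ collapses to $q=\frac{2d}{d-4+4\kappa}$, and the condition $\min\{1/2,1/q'\}\geq\frac{k+2\overline\alpha}{2(k+1)}=\frac{d-2\kappa}{2(d-1)}$ is equivalent to $\kappa\geq 1/2$, precisely matching the hypothesis. This yields
\[
\|v_n\|_q \les \|\Box v_n\|_2^{1-\kappa}\|v_n\|_2^\kappa.
\]

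Next I would transfer this back to $u_n$. The Gagliardo--Nirenberg inequality with our specific $q$ is scale-invariant: combining $\|u_n\|_q=2^{-nd/q}\|v_n\|_q$, $\|\Box u_n\|_2 = 2^{n(2-d/2)}\|\Box v_n\|_2$ and $\|u_n\|_2=2^{-nd/2}\|v_n\|_2$ with $-d/q=(1-\kappa)(2-d/2)-\kappa d/2$ shows that the constant is uniform in $n$, hence
\[
\|u_n\|_q \les \|\Box u_n\|_2^{1-\kappa}\|u_n\|_2^\kappa \qquad (n\in\Z).
\]
Since $q$ is finite for all admissible $\kappa$ and $q\geq 2$, the Littlewood-Paley square-function characterization together with Minkowski's inequality (applied in $L^{q/2}$) gives $\|u\|_q^2\les\sum_n\|u_n\|_q^2$. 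Inserting the dyadic bound, applying Hölder's inequality with conjugate exponents $\frac{1}{1-\kappa},\frac{1}{\kappa}$ and using Plancherel's identity together with the almost-orthogonality $\sum_n\|u_n\|_2^2\les\|u\|_2^2$ and $\sum_n\|\Box u_n\|_2^2\les\|\Box u\|_2^2$ finishes the proof in the main case $d\geq 3$.

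The main obstacle is the degenerate dimension $d=2$: the ``cone'' collapses to two lines so that $k=d-2=0$ and hypothesis (A1) of Theorem~\ref{thm:GNhigherD} is violated. Here I would argue by the factorization $\Box=(\partial_t-\partial_x)(\partial_t+\partial_x)$ and pass to null coordinates $\eta_1=\tau+\xi,\eta_2=\tau-\xi$, in which the symbol becomes the product $\eta_1\eta_2$ with characteristic set $\{\eta_1=0\}\cup\{\eta_2=0\}$. The requisite estimate then reduces, via mixed-norm Minkowski/Fubini arguments, to the one-dimensional Gagliardo-Nirenberg inequality applied in each null variable, where the $L^\infty$-endpoint obstruction explains the strict requirement $\kappa>1/2$ (rather than $\geq$) in this dimension. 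The analogous approach with $P(D)=i\partial_t-\Delta$ and the paraboloid $\tau=|\xi|^2$ (which has $d-1$ non-vanishing principal curvatures everywhere) yields~\eqref{eq:GN_Schroe} by the same dyadic/rescaling scheme.
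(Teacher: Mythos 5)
Your proposal follows a genuinely different route from the paper, and one step needs repair. The paper does not use the dyadic rescaling machinery for the wave operator at all: it foliates frequency space by the level sets $\mathcal C_t=\{\xi_d^2-|\xi'|^2=t\}$ of the symbol, applies Strichartz's restriction theorem \cite{Strich} to each hyperboloid (picking up the scaling factor $|t|^{\frac{d-1}{4}-\frac{d}{2q}}$), rewrites $\|\partial_{tt}u-\Delta u\|_2^2$ and $\|u\|_2^2$ via Plancherel as the weighted integrals $\int t^2\varphi(t)^2\,dt$ and $\int\varphi(t)^2\,dt$ of $\varphi(t)=\|\hat u|\cdot|^{-1/2}\|_{L^2(\mathcal C_t)}$, and concludes with the weighted Cauchy--Schwarz lemma \cite[Lemma~2.1]{FerJeaManMar}; the Strichartz window $\frac{2(d+1)}{d-1}\le q\le \frac{2d}{d-2}$ is then enlarged to all $\kappa\le 1$ by interpolating with $L^2$. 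Your scheme --- rescale dyadic annuli onto a fixed truncated cone with $k=d-2$, apply the compact-piece version of Theorem~\ref{thm:GNhigherD}, and reassemble via the square function, H\"older in $n$ and almost orthogonality --- is exactly the program that Remark~(d) of the paper says ``remains to be done''. For the scale-invariant $L^2$ exponents of the theorem and $d\ge 3$ it does close: your identification of $q$, the equivalence of the curvature condition with $\kappa\ge\frac12$, the uniformity of the constant in $n$, and the $\ell^2$ reassembly (which needs $2\le q<\infty$, satisfied here) all check out. The trade-off: your route shows that the paper's compact-surface machinery already implies the global statement at scale-invariant exponents, while the paper's route is independent of the compact-piece variant of Theorem~\ref{thm:GNhigherD} (which is only asserted, not proved, in Remark~(d)) and treats $d=2$ on the same footing as $d\ge 3$, because the level sets of the symbol are curved hyperbolas even though the cone itself is flat.

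The $d=2$ step is where your write-up has a genuine soft spot. Read literally, ``the one-dimensional Gagliardo--Nirenberg inequality applied in each null variable'' cannot work: iterating the product-form 1D inequality in $w_1$ and then in $w_2$ forces you to control the single null derivative $\|\partial_{w_1}u\|_2$, and this quantity is not dominated by $\|\Box u\|_2^{a}\|u\|_2^{1-a}$ for any $a\in[0,1]$ (test $\hat u=\ind_{[R,R+1]\times[\delta,2\delta]}$ in null frequency coordinates: the left-hand side is $\sim R\delta^{1/2}$ while the right-hand side is $\sim R^{a}\delta^{a+\frac12}$, and $R^{1-a}\delta^{-a}\to\infty$). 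The fix is to iterate instead the one-dimensional homogeneous Sobolev embedding $\dot H^{1-\kappa}(\R)\hookrightarrow L^q(\R)$ --- valid exactly for $1-\kappa<\frac12$, which is where your strict inequality $\kappa>\frac12$ really comes from --- in each null variable with a Minkowski swap in between; this gives $\|u\|_q\les \||D_{w_1}|^{1-\kappa}|D_{w_2}|^{1-\kappa}u\|_2$, and only afterwards do you split into the product $\|\Box u\|_2^{1-\kappa}\|u\|_2^{\kappa}$ by H\"older on the Fourier side. With that correction (or, alternatively, a bi-parameter Littlewood--Paley/Bernstein argument in $|\eta_1|$ and $|\eta_2|$ separately) the $d=2$ case goes through.
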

\begin{proof}
  We first consider the case $d\geq 3$, define  $\mathcal C_t:= \{\xi=(\xi',\xi_d)\in\R^d:
  \xi_d^2-|\xi'|^2=t\}$ and the induced surface measure $\sigma_t$. Then we have the representation formula 
  \begin{align*}
    u(z) 
    = c_d \int_{\R^d} \hat u(\xi) e^{iz\cdot\xi}\,d\xi 
    = \frac{c_d}{2} \int_\R \int_{\mathcal C_t} \hat u(\xi)|\xi|^{-1}  e^{iz\cdot\xi}\,d\sigma_t(\xi)\,dt
  \end{align*}  
  where $c_d=(2\pi)^{-d/2}$. Strichartz' inequality from \cite{Strich} (Theorem I, case III (b)) implies that
  we have for $\frac{2(d+1)}{d-1}\leq q\leq \frac{2d}{d-2}$ 
  \begin{align*}
    \|u\|_q 
    &\les   \int_\R \|\mathcal F^{-1}\left(\hat u |\cdot|^{-1}\,d\sigma_t\right)\|_q \,dt \\
    &\les   \int_\R |t|^{\frac{d-1}{4}-\frac{d}{2q}} \|\hat u |\cdot|^{-1}\|_{L^2(\mathcal C_t,\,d\sigma_t)}
    \,dt
    \\
    &\les   \int_\R |t|^{\frac{d-2}{4}-\frac{d}{2q}} \|\hat u |\cdot|^{-1/2}\|_{L^2(\mathcal C_t,\,d\sigma_t)}
    \,dt.
  \end{align*}   
  Here, the factor $|t|^{\frac{d-1}{4}-\frac{d}{2q}}$ is obtained via scaling and in the last estimate we used 
  $|\xi|\geq \sqrt{|t|}$ for $\xi\in\mathcal C_t$.  
  On the other hand, Plancherel's Theorem gives
  \begin{align*}
    \|\partial_{tt}u-\Delta u\|_2^2 
    &= \int_{\R^d} |\xi_d^2-|\xi'|^2|^2|\hat u(\xi)|^2 \,d\xi \\
    &= \frac{1}{2} \int_\R \int_{\mathcal C_t}  |t|^2  |\hat u(\xi)|^2|\xi|^{-1} \,d\sigma_t(\xi)\,dt \\
    &= \frac{1}{2}\int_\R t^2  \|\hat u |\cdot|^{-1/2}\|_{L^2(\mathcal C_t,\,d\sigma_t)}^2 \,dt 
    \intertext{and } 
    \|u\|_2^2 
    &= \frac{1}{2}\int_\R  \|\hat u |\cdot|^{-1/2}\|_{L^2(\mathcal C_t,\,d\sigma_t)}^2 \,dt.
  \end{align*}   
  Writing $\varphi(t):= \|\hat u |\cdot|^{-1/2}\|_{L^2(\mathcal C_t,\,d\sigma_t)}$ it remains to
  prove that the quotient
  \begin{align*}
    \frac{\int_\R |t|^{\frac{d-2}{4}-\frac{d}{2q}} \varphi(t)\,dt}{
    (\int_\R t^{2}\varphi(t)^2\,dt)^{\frac{1-\kappa}{2}} 
    (\int_\R \varphi(t)^2\,dt)^{\frac{\kappa}{2}}
    } 
  \end{align*}
  is bounded independently of $\varphi$. According to \cite[Lemma~2.1]{FerJeaManMar}, with 
  $w(t)=|t|^{\frac{d-2}{4}-\frac{d}{2q}}$, $w_1(t)=1$ and $w_2(t)=t$, this is the case if and only if the
  following quantity is finite:
  \begin{align*} 
    \sup_{s>0} s^{\frac{1-\kappa}{2}}\left\|\frac{w}{(w_1^2+s w_2^2)^{1/2}}\right\|_{L^2(\R)}
    &= \sup_{s>0} s^{\frac{1-\kappa}{2}}
    \left( \int_\R \frac{|t|^{\frac{d-2}{2}-\frac{d}{q}}}{1+st^2}\,dt\right)^{\frac{1}{2}} \\
    &= \sup_{s>0} s^{\frac{1-\kappa}{2}-\frac{1}{4}(\frac{d}{2}-\frac{d}{q})}
    \left( \int_\R \frac{|\rho|^{\frac{d-2}{2}-\frac{d}{q}}}{1+\rho^2}\,d\rho\right)^{\frac{1}{2}}.
  \end{align*}
  This leads to $q=\frac{2d}{d-4+4\kappa}$. In view of 
  $\frac{2(d+1)}{d-1}\leq q\leq \frac{2d}{d-2}$ this requires $\frac{1}{2}\leq \kappa\leq \frac{d+2}{2(d+1)}$,
  but the upper bound for $\kappa$ may be removed just as in \cite[p.20-21]{FerJeaManMar} by combining the
  already established inequality for $\frac{2(d+1)}{d-1}$ with  
  $$
    \|u\|_q\leq \|u\|_2^{1-\theta}\|u\|_{\frac{2(d+1)}{d-1}}^\theta 
    \qquad 2\leq q\leq \frac{2(d+1)}{d-1},\quad
    \frac{1}{q}=\frac{1-\theta}{2}+\frac{\theta}{\frac{2(d+1)}{d-1}}. 
  $$  
  In the case $d=2$ the analogous reasoning based on Theorem I, case III (c) \cite{Strich}  
  shows that the above estimates are valid for 
  $6=\frac{2(d+1)}{d-1}\leq q< \frac{2d}{d-2}=\infty$ and thus $\frac{1}{2}<\kappa\leq 
  \frac{d+2}{2(d+1)}$. The same interpolation trick then allows to extend this to the whole range
  $\kappa>\frac{1}{2}$,
\end{proof}

We now apply this method to the Schr\"odinger operator.

\begin{thm}\label{thm:GN_Schroe}
 Let $d\in\N,d\geq 2$. Then~\eqref{eq:GN_Schroe} holds provided that $r=2,q=\frac{2(d+1)}{d-3+4\kappa}$ 
 and $\frac{1}{2}\leq\kappa\leq 1$.
\end{thm}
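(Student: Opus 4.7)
The plan is to mimic the proof of Theorem~\ref{thm:GN_wave}, replacing the cone by the family of paraboloids $\mathcal{P}_t := \{\xi\in\R^d : |\xi'|^2-\xi_d = t\}$, which are the level sets of the Fourier symbol of $i\partial_s - \Delta_y$, where $z = (y,s)\in \R^{d-1}\times\R$. The coarea-type substitution $\xi_d = |\xi'|^2 - t$ gives $d\xi = d\xi'\,dt$ and leads to the decomposition
\begin{align*}
  v(y,s) = c_d \int_\R e^{-ist}\, \Phi_t(y,s)\,dt, \qquad
  \Phi_t(y,s) := (e^{is\Delta_y}f_t)(y),
\end{align*}
where $f_t := \mathcal{F}^{-1}_{\xi'}(\hat v(\cdot, |\cdot|^2-t)) \in L^2(\R^{d-1})$ and $c_d=(2\pi)^{-d/2}$. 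Setting $\varphi(t) := \|f_t\|_{L^2(\R^{d-1})} = \|\hat v(\cdot,|\cdot|^2-t)\|_{L^2(\R^{d-1})}$, Plancherel's identity together with the same substitution yields
\begin{align*}
  \|v\|_2^2 = \int_\R \varphi(t)^2\,dt, \qquad
  \|i\partial_s v - \Delta_y v\|_2^2 = \int_\R t^2\,\varphi(t)^2\,dt.
\end{align*}

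Since every $\mathcal{P}_t$ is a translate of $\mathcal{P}_0$ in the $\xi_d$-direction (the associated modulation $e^{-ist}$ preserves $L^q$ norms), Strichartz's endpoint estimate for the free Schr\"odinger evolution in $\R^{d-1}$ spatial dimensions~\cite{Strich} provides the $t$-uniform bound
\begin{align*}
  \|\Phi_t\|_{L^{q_0}(\R^d)} = \|e^{is\Delta_y}f_t\|_{L^{q_0}_{s,y}} \les \|f_t\|_{L^2(\R^{d-1})} = \varphi(t), \qquad q_0 := \frac{2(d+1)}{d-1}.
\end{align*}
Combined with Minkowski's inequality this gives $\|v\|_{q_0} \les \int_\R \varphi(t)\,dt$. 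Applying Cauchy-Schwarz with weight $(1+\lambda t^2)^{-1/2}$ and optimizing in $\lambda > 0$ (equivalently, \cite[Lemma~2.1]{FerJeaManMar} with $w = w_1 = 1$ and $w_2(t) = |t|$), one obtains
\begin{align*}
  \|v\|_{q_0} \les \int_\R \varphi(t)\,dt \les \|i\partial_s v - \Delta_y v\|_2^{1/2}\,\|v\|_2^{1/2},
\end{align*}
which is the claimed inequality at $\kappa = 1/2$.

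The remaining range $1/2 < \kappa \leq 1$ follows from H\"older's inequality $\|v\|_q \leq \|v\|_{q_0}^{1-\theta}\|v\|_2^\theta$ with $\theta = 2\kappa - 1 \in [0,1]$: combined with the endpoint estimate above it yields $\|v\|_q \les \|i\partial_s v - \Delta_y v\|_2^{1-\kappa}\,\|v\|_2^\kappa$ with $\frac{1}{q} = \frac{1-\theta}{q_0} + \frac{\theta}{2} = \frac{d-3+4\kappa}{2(d+1)}$, i.e.\ $q = \frac{2(d+1)}{d-3+4\kappa}$; the endpoint $\kappa = 1$ is trivial.

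The only substantive difference with Theorem~\ref{thm:GN_wave} is that the paraboloids $\mathcal{P}_t$ form a family of translates rather than dilates of one fixed surface. Consequently, the Strichartz step produces no $t$-dependent scaling factor, so only the single diagonal endpoint $q_0$ is obtained at this stage, and all other exponents must be recovered by the final interpolation with $\|v\|_2$. The hardest point is therefore not an estimate but a structural one: confirming that a single Strichartz endpoint, rather than a Strichartz family with scaling, is all that this method produces. The uniformity of the Strichartz constant in $t$ is immediate from translation invariance and poses no difficulty.
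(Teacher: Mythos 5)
Your proof is correct and follows essentially the same route as the paper: foliate frequency space by the level sets of the symbol, apply Strichartz's paraboloid estimate on each leaf, close the case $\kappa=\tfrac12$, $q_0=\tfrac{2(d+1)}{d-1}$ by a weighted Cauchy--Schwarz in the foliation parameter (i.e.\ \cite[Lemma~2.1]{FerJeaManMar}), and interpolate with $\|v\|_2$ for $\kappa>\tfrac12$. The only difference is cosmetic: the paper normalizes each $\mathcal P_t$ to $\mathcal P_1$ by parabolic rescaling and carries $|t|$-weights that cancel precisely at $(q_0,\tfrac12)$, whereas you exploit that the $\mathcal P_t$ are translates of one another, so the Strichartz constant is $t$-uniform by modulation invariance and no weights appear---a correct and slightly cleaner bookkeeping of the same argument.
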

\begin{proof}
  Define  $\mathcal P_t:= \{\xi=(\xi',\xi_d)\in\R^d:  \xi_d-|\xi'|^2=t\}$ and the induced surface measure
  $\sigma_t$.  Plancherel's identity gives
\begin{align*}
  \|v\|_2^2
  &=  \int_\R \int_{\R^{d-1}} |\hat v(\xi',t+|\xi'|^2)|^2    \,d\xi' \,dt \\
  &=  \int_\R |t|^{\frac{d-1}{2}}\int_{\R^{d-1}} |\hat v(\sqrt t \xi',t(1+|\xi'|^2))|^2 
  \,d\xi' \,dt  \\
  &=  \int_\R |t|^{\frac{d-1}{2}} \int_{\R^{d-1}} |\hat v_t|^2 \sqrt{1+4|\xi'|^2} \,d\xi' \,dt \\
  &=  \int_\R |t|^{\frac{d-1}{2}} \|\hat v_t \|_{L^2(\mathcal P_1,\,d\sigma_1)}^2    \,dt         
\intertext{ where $\hat v_t(\xi):= \hat v(\sqrt t \xi',t\xi_d)  (1+4|\xi'|^2)^{-1/4}$.
  Similarly,}
  \|i\partial_t v-\Delta v\|_2^2
  &=  \int_\R  t^{2+\frac{d-1}{2}} \|\hat v_t \|_{L^2(\mathcal P_1,\,d\sigma_1)}^2    \,dt.         
\end{align*}
 Strichartz' inequality from \cite{Strich} (Theorem I, case I) implies for
  $q=\frac{2(d+1)}{d-1}$  
  \begin{align*}
    \|v\|_q 
    &= \left\| c_d \int_\R \int_{\R^{d-1}} \hat v(\xi',t+|\xi'|^2)  e^{iz\cdot
    (\xi',t+|\xi'|^2)}\,d\xi'\,dt\right\|_q  \\
    &\les  \int_\R \left\|   \int_{\R^{d-1}} \hat v(\xi',t+|\xi'|^2)  e^{iz\cdot (\xi',t+|\xi'|^2)}\,d\xi'
    \right\|_q \,dt \\
    &\les  \int_\R |t|^{\frac{d-1}{2}} \left\|   \int_{\R^{d-1}} \hat v(\sqrt t\xi',t(1+|\xi'|^2)) 
    e^{iz\cdot (\sqrt t\xi',t(1+|\xi'|^2))}\,d\xi' \right\|_q \,dt    \\
    &\les  \int_\R |t|^{\frac{d-1}{2}} \left\|  \mathcal F^{-1}\left( \hat v(\sqrt t\xi',t\xi_d) 
    (1+4|\xi'|^2)^{-1/2}\,d\sigma_1\right)(\sqrt tz',tz_1) \right\|_q \,dt \\
    &=  \int_\R |t|^{\frac{d-1}{2}-\frac{d+1}{2q}} \left\|  \mathcal F^{-1}\left( \hat v(\sqrt t\xi',t\xi_d)
    (1+4|\xi'|^2)^{-1/2}\,d\sigma_1\right)  \right\|_q \,dt \\
    &\les  \int_\R |t|^{\frac{d-1}{2}-\frac{d+1}{2q}} \left\| \hat v(\sqrt t\xi',t\xi_d)
    (1+4|\xi'|^2)^{-1/2}    \right\|_{L^2(\mathcal P_1,\,d\sigma_1)} \,dt \\
    &\les \int_\R |t|^{\frac{d-1}{2}-\frac{d+1}{2q}}  \|  \hat v_t\|_{L^2(\mathcal P_1,\,d\sigma_1)} \,dt. 
  \end{align*}    
  We set $\varphi(t):= |t|^{\frac{d-1}{4}}\|\hat v_t\|_{L^2(\mathcal P_1,\,d\sigma_1)}$ and it
  remains to show that the quotient
  \begin{align*}
    \frac{\int_\R |t|^{\frac{d-1}{4}-\frac{d+1}{2q}} \varphi(t)\,dt}{
    (\int_\R t^{2}\varphi(t)^2\,dt)^{\frac{1-\kappa}{2}} 
    (\int_\R \varphi(t)^2\,dt)^{\frac{\kappa}{2}}
    } 
  \end{align*}
  is bounded independently of $\varphi$. We apply \cite[Lemma~2.1]{FerJeaManMar} once more. 
  \begin{align*}
    \sup_{s>0} s^{\frac{1-\kappa}{2}} \left(\int_\R \frac{ |t|^{\frac{d-1}{2}-\frac{d+1}{q}}}{1+st^2}\,dt \right)^{\frac{1}{2}}
    &=   \sup_{s>0} s^{\frac{1-\kappa}{2}} \left( \left(\frac{1}{\sqrt s}\right)^{\frac{d+1}{2}-\frac{d+1}{q}} 
    \int_\R \frac{|\rho|^{\frac{d-1}{2}-\frac{d+1}{q}}}{1+\rho^2}\,d\rho \right)^{\frac{1}{2}} \\
    &=  \sup_{s>0} s^{\frac{1-\kappa}{2}-\frac{d+1}{8}+\frac{d+1}{4q}} 
    \left(\int_\R \frac{|\rho|^{\frac{d-1}{2}-\frac{d+1}{q}}}{1+\rho^2}\,d\rho \right)^{\frac{1}{2}}.  
  \end{align*} 
  This term is indeed finite for $q=\frac{2(d+1)}{d-1}$ and $\kappa=\frac{1}{2}$, which proves the claim in
  this special case. The claim for general $\kappa\geq \frac{1}{2}$ follows as above by interpolation.
\end{proof}

 We conjecture that  at least for $1< r\leq 2\leq q<\infty$ and $0<\kappa<1$ the
 inequality~\eqref{eq:GN_wave} actually holds for exponents 
 \begin{equation}\label{eq:GN_wave_general}
    \frac{1}{r}-\frac{1}{q}=\frac{2(1-\kappa)}{d},\qquad 
    \min\left\{\frac{1}{r},\frac{1}{q'}\right\} \geq \frac{d-2\kappa}{2(d-1)}
 \end{equation} 
  whereas the corresponding inequality involving the Schr\"odinger operator holds whenever  
  $$
    \frac{1}{r}-\frac{1}{q}=\frac{2(1-\kappa)}{d+1},\qquad 
    \min\left\{\frac{1}{r},\frac{1}{q'}\right\} \geq \frac{d+1-2\kappa}{2d}.
  $$   
  Note that the Sobolev inequalities \cite[Theorem~1.1]{JeongKwonLee_Uniform} then take the form of the
  endpoint estimate $\kappa=0$ in ~\eqref{eq:GN_wave_general}.

\section*{Acknowledgments}

The author thanks Robert Schippa and Louis Jeanjean  
for helpful comments related to this work.  Funded by the Deutsche Forschungsgemeinschaft (DFG, German Research
Foundation) -- Project-ID 258734477 -- SFB~1173.  
 
\bibliographystyle{abbrv}
\bibliography{biblio}

\end{document}